\newtheorem{lemma}{Lemma}
\newtheorem{proposition}{Proposition}
\newtheorem{corollary}{Corollary}
\theoremstyle{definition}
\newtheorem{example}{Example}
\newtheorem*{theorem*}{Theorem}
\newtheorem*{corollary*}{Corollary}
\title{On Frobenius-Schur exponent bounds}
\author{Agustina Czenky, Julia Plavnik, Andrew Schopieray}
\date{}
\begin{document}

\maketitle

\begin{abstract}
Here we study bounds on the Frobenius-Schur exponent of spherical fusion categories based on their global dimension generalizing bounds from the representation theory of finite-dimensional quasi-Hopf algebras.  Our main result is that if the Frobenius-Schur exponent of a modular fusion category is a prime power for some prime integer $p$, then it is bounded by the norm of its global dimension when $p$ is odd, and four times the norm of its global dimension when $p=2$; these bounds are optimal.  If one assumes in addition pseudounitarity, the categories achieving the optimal bound are completely described and we attain similar bounds and classifications for arbitrary spherical fusion categories.  This proof includes an explicit classification of modular fusion categories of Frobenius-Perron dimension $p^5$; all examples which are not pointed are constructed explicitly from the Ising categories and the representation theory of extraspecial $p$-groups.
\end{abstract}

\section{Introduction}

\par The characters of a finite group $G$ take values in the cyclotomic field $\mathbb{Q}(\zeta_n)$ where $\zeta_n$ is a primitive root of unity of order $n:=|G|$.  Analogous statements can be made about doubles of finite groups (refer to \cite{MR1770077}, for example), or more generally fusion and modular fusion categories over $\mathbb{C}$, whose characters are controlled by their modular data, i.e.\ their $S$- and $T$-matrices in the sense of \cite[Section 8.13]{tcat}.  The modular data of a modular fusion category $\mathcal{C}$ consists of cyclotomic numbers \cite[Theorem 8.14.7]{tcat} and it is important to determine what is the least $n\in\mathbb{Z}_{\geq1}$ such that the modular data of $\mathcal{C}$ is contained in $\mathbb{Q}(\zeta_n)$.  This least integer is the order of the $T$-matrix or equivalently the \emph{Frobenius-Schur exponent} of $\mathcal{C}$, $\mathrm{FSexp}(\mathcal{C})$ \cite[Theorem 7.7]{MR2313527}.  Here our goal is to determine how the Frobenius-Schur exponent is bounded by the global dimension of a modular fusion category.  The Frobenius-Schur exponent of a spherical fusion category and its double are equal \cite[Corollary 7.8]{MR2313527}, so our goal is easily broadened to find bounds for the Frobenius-Schur exponent of an arbitrary spherical fusion category.  This question was approached by Ng and Schauenburg for quasi-Hopf algebras in the paper first introducing these notions in generality \cite[Section 9]{MR2313527}, and previously by others in the context and language of Hopf algebras \cite{MR2213320}.  Their results speak of divisibility which translates into bounds for rational integers, but does not translate to bounds for arbitrary cyclotomic integers.

\par The Frobenius-Schur exponent of a spherical fusion category $\mathcal{C}$ is intimately related with another positive integer, the algebraic norm, $\mathrm{Ndim}(\mathcal{C})$, of its global dimension, $\dim(\mathcal{C})$; equivalently, $\mathrm{Ndim}(\mathcal{C})$ is the product of all Galois conjugates of $\dim(\mathcal{C})$.  In particular, $\mathrm{FSexp}(\mathcal{C})$ and $\mathrm{Ndim}(\mathcal{C})$ share the same prime factors \cite[Theorem 3.9]{MR3486174}.  As there are finitely many equivalence classes of fusion categories for each fixed $\mathrm{Ndim}$ \cite[Theorem 3.1]{MR4167661}, $\mathrm{FSexp}(\mathcal{C})$ is bounded as a function of $\mathrm{Ndim}(\mathcal{C})$.  Proposition 4.2 of \cite{MR4167661} along with a result of P.\ Etingof provides a rudimentary bound \cite[Theorem 5.1]{MR1906068}: for any modular fusion category $\mathcal{C}$, $\mathrm{FSexp}(\mathcal{C})^d$ divides $\mathrm{Ndim}(\mathcal{C})^{5/2}$ where $d$ is the number of distinct Galois conjugates of $\dim(\mathcal{C})^{5/2}$.  Thus by passing to the double (or Drinfeld center; refer to \cite[Section 7.13]{tcat}, for example), for an arbitrary spherical fusion category, $\mathrm{FSexp}(\mathcal{C})^d$ divides $\mathrm{Ndim}(\mathcal{C})^5$.  Note that these results leave the possibility of modular fusion categories with $\mathrm{FSexp}(\mathcal{C})$ quite large in comparison to $\mathrm{Ndim}(\mathcal{C})$ for arbitrary $d\in\mathbb{Z}_{\geq1}$; one motivation for this paper is that we do not believe such categories exist.

If one restricts themself to the classical case $d=1$ where $\mathcal{C}$ is the representation category of a semisimple quasi-Hopf algebra hence $\mathrm{Ndim}(\mathcal{C})=\dim(\mathcal{C})$, some explicit bounds are implied by known divisibility.  In \cite[Theorem 9.1]{MR2313527} it was shown that $\mathrm{FSexp}(\mathcal{C})\leq \mathrm{Ndim}(\mathcal{C})^4$.  If one further restricts to representation categories of Hopf algebras, it was shown that $\mathrm{FSexp}(\mathcal{C})\leq\mathrm{Ndim}(\mathcal{C})^3$, and if one restricts to only group-theoretical fusion categories, $\mathrm{FSexp}(\mathcal{C})\leq\mathrm{Ndim}(\mathcal{C})^2$ \cite[Theorem 9.2]{MR2313527}.  Most notably, these stricter bounds only apply to fusion categories of integer Frobenius-Perron dimension.  Such fusion categories are \emph{pseudounitary}, i.e.\ their global Frobenius-Perron dimension and global categorical dimension coincide.  In our study, we do not assume the Frobenius-Perron dimension of our fusion categories is an integer, bringing the study of bounds of Frobenius-Schur exponents outside of the classical world of (quasi-) Hopf algebras.  For an idea of how much larger the non-classical world is, consider the pseudounitary fusion categories $\mathcal{C}(\mathfrak{sl}_2,p^n,q)_\mathrm{ad}$ where $q^2$ is a root of unity of order $p^n$ for any odd prime $p\in\mathbb{Z}_{\geq2}$ and $n\in\mathbb{Z}_{\geq1}$ (refer to \cite[Section 4]{MR4079742}, for example).  This infinite family has prime power Frobenius-Schur exponent, hence $\mathrm{Ndim}(\mathcal{C})$ is a prime power, while $\mathrm{FPdim}(\mathcal{C})\not\in\mathbb{Z}$.  In particular, $\mathrm{FPdim}(\mathcal{C}(\mathfrak{sl}_2,q,p^n)_\mathrm{ad})=p^n/(4\sin^2(\pi/p^n))$.

\par Our main result is the following optimal bound.  

\begin{theorem*}
Let $\mathcal{C}$ be a modular fusion category.  If $\mathrm{FSexp}(\mathcal{C})$ is a prime power for a prime $p\in\mathbb{Z}_{\geq2}$, then $\mathrm{FSexp}(\mathcal{C})\leq\mathrm{Ndim}(\mathcal{C})$ if $p$ is odd and $\mathrm{FSexp}(\mathcal{C})\leq4\cdot\mathrm{Ndim}(\mathcal{C})$ if $p=2$, and these bounds are optimal for all $p$.
\end{theorem*}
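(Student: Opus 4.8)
The plan is to reduce the general statement to a structural analysis of modular fusion categories with prime-power Frobenius-Schur exponent, using the interplay between $\mathrm{FSexp}$, the global dimension, and the prime-power hypothesis. Since $\mathrm{FSexp}(\mathcal{C})$ and $\mathrm{Ndim}(\mathcal{C})$ share the same prime factors \cite[Theorem 3.9]{MR3486174}, the hypothesis that $\mathrm{FSexp}(\mathcal{C})=p^m$ forces $\mathrm{Ndim}(\mathcal{C})=p^k$ to be a power of the same prime $p$; in particular $\dim(\mathcal{C})$ and all its Galois conjugates are algebraic integers whose norms are $p$-powers. The first step is to exploit this to control the prime factorization of $\mathrm{FPdim}(\mathcal{C})$: one shows via Galois action on the modular data and the Cauchy theorem for fusion categories that the only primes dividing $\mathrm{FPdim}(\mathcal{C})$ (in the ring of integers of the relevant cyclotomic field) lie over $p$, so $\mathcal{C}$ is ``$p$-local.'' I would then invoke the structure theory of modular fusion categories — anisotropic/Witt-theoretic decomposition together with the prime decomposition of \cite{tcat} — to reduce to the case where $\mathrm{FPdim}(\mathcal{C})$ is itself a power of $p$ (up to a controlled pseudounitary-versus-not discrepancy measured by $\mathrm{Ndim}$), and then to the case of small $p$-power Frobenius-Perron dimension.

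The core of the argument is the explicit classification announced in the abstract: modular fusion categories of Frobenius-Perron dimension $p^n$ for small $n$, culminating in $\mathrm{FPdim}(\mathcal{C})=p^5$. For $n \le 4$ the categories are pointed or near-pointed and $\mathrm{FSexp}$ is read off directly from the metric group / quadratic form data, giving $\mathrm{FSexp}\le p^n = \mathrm{Ndim}$ with the $p=2$ anomaly coming from the fact that a nondegenerate quadratic form on $(\mathbb{Z}/2^k)$ can have Gauss sum phase of order $2^{k+2}$ rather than $2^k$. The new content at $n=5$ is that every non-pointed example is built from an Ising category (for $p=2$) or from the representation theory of an extraspecial $p$-group of order $p^3$ (for odd $p$) — more precisely, as a gauging/equivariantization or Deligne product of such pieces with pointed categories — and in each such family one computes $\mathrm{FSexp}$ explicitly against $\mathrm{Ndim}$. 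Having pinned down the extremal configurations, one checks the inequality case-by-case and identifies exactly when equality $\mathrm{FSexp}(\mathcal{C})=\mathrm{Ndim}(\mathcal{C})$ (odd $p$) or $\mathrm{FSexp}(\mathcal{C})=4\,\mathrm{Ndim}(\mathcal{C})$ ($p=2$) holds; the Ising categories and the extraspecial-$p$-group doubles (and products thereof with suitable pointed categories) furnish the optimality/sharpness examples, so no better universal bound is possible.

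I expect the main obstacle to be the classification of modular fusion categories of Frobenius-Perron dimension $p^5$, and in particular ruling out exotic non-pointed examples not accounted for by the Ising/extraspecial-$p$-group constructions. Frobenius-Perron dimension $p^5$ is large enough that a category need not be nilpotent a priori, so one must combine several tools: the prime decomposition of modular categories and the fact that a modular category of prime-power dimension is nilpotent (hence a Deligne product of $p$-categories) via \cite{tcat}, bounds on ranks and on the possible fusion rules coming from $d$-number / cyclotomic constraints on the $S$-matrix, and the Drinfeld-center characterization to handle the non-pseudounitary case by passing to the double where $\mathrm{Ndim}(\mathcal{C})$ squares but $\mathrm{FSexp}(\mathcal{C})$ is unchanged. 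A secondary technical point is bookkeeping the $p=2$ factor of $4$ uniformly: it must be shown that the discrepancy between the order of the $T$-matrix and $\mathrm{FPdim}$ never exceeds a factor of $4$, which traces back to a sharp estimate on Gauss sums of $2$-adic quadratic forms and on the central charge of Ising-type modular categories.
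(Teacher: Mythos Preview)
Your proposal has a genuine gap at its load-bearing step. You propose to ``reduce to the case where $\mathrm{FPdim}(\mathcal{C})$ is itself a power of $p$'' via Witt-theoretic or prime-decomposition arguments, and then to small $p$-power Frobenius--Perron dimension, so that the $p^5$ classification becomes the core of the bound. Neither reduction is available. The paper explicitly stresses that the categories of interest include families like $\mathcal{C}(\mathfrak{sl}_2,p^n,q)_\mathrm{ad}$ with prime-power $\mathrm{FSexp}$ but irrational $\mathrm{FPdim}$; there is no mechanism by which anisotropic/Witt decomposition turns such a category into one of integer, let alone $p$-power, Frobenius--Perron dimension. And even granting $\mathrm{FPdim}(\mathcal{C})=p^k$, nothing bounds $k$ by $5$, so a classification up to $p^5$ cannot yield the inequality for all $k$. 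The $p^5$ classification in the paper is used only to pin down the \emph{extremal} categories in the pseudounitary corollaries, not to prove the bound itself.

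The paper's actual argument is entirely different and does not route through $\mathrm{FPdim}$ at all. One passes to $\overline{\mathcal{C}}=\prod_\sigma\mathcal{C}^\sigma$, which has $\dim(\overline{\mathcal{C}})=\mathrm{Ndim}(\mathcal{C})\in\mathbb{Z}$ and the same $\mathrm{FSexp}$. Lemma~\ref{theelemma} produces $X\in\mathcal{O}(\overline{\mathcal{C}})$ with $p^m=\mathrm{FSexp}(\mathcal{C})$ dividing $\mathrm{ord}(t_{X,X})$, and then Lemma~\ref{theefirstlemma} gives
\[
\mathrm{Ndim}(\mathcal{C})=\dim(\overline{\mathcal{C}})\geq\dim(\mathcal{O}_X^t)\geq[\mathbb{Q}(\zeta_{p^m})^+:\mathbb{Q}]\cdot\mathcal{M}(\dim(X)).
\]
Since $[\mathbb{Q}(\zeta_{p^m})^+:\mathbb{Q}]=\tfrac{1}{2}p^{m-1}(p-1)$ for odd $p$ (and $2^{m-3}$ for $p=2$, $m\geq4$), and $\mathcal{M}(\dim(X))\geq1$, this already gives $\mathrm{Ndim}(\mathcal{C})\gtrsim p^m$ directly; the refinements use Siegel's trace bound and the factorization of simple objects in $\overline{\mathcal{C}}$ to sharpen this to the stated inequalities (Propositions~\ref{thm:odd}, \ref{thm:three}, \ref{thm:fooor}). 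The factor of $4$ at $p=2$ is exactly the defect $[\mathbb{Q}(\zeta_{2^m})^+:\mathbb{Q}]=2^{m-3}$ versus $2^{m-1}$, not a Gauss-sum computation. Your intuition about optimality via Ising and pointed categories is correct, but the inequality itself is a Galois-orbit size estimate, not a classification.
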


With the assumption of pseudounitarity, it is possible to completely describe the extremal cases which attain these bounds.

\begin{corollary*}
Let $\mathcal{C}$ be Galois conjugate to a pseudounitary modular fusion category.  If $\mathrm{FSexp}(\mathcal{C})$ is a prime power, then $\mathrm{FSexp}(\mathcal{C})<\mathrm{Ndim}(\mathcal{C})$ unless $\mathcal{C}$ is pointed, $\mathcal{C}$ is a Fibonacci modular fusion category, $\mathcal{C}$ is a product of two Ising modular fusion categories, or the product of an Ising modular fusion category and a pointed modular fusion category of dimension $1$, $2$, or $2^2$.
\end{corollary*}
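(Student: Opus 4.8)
The plan is to pass to a pseudounitary representative, break $\mathcal{C}$ into prime Deligne factors, and reduce to the classification of the prime pseudounitary modular fusion categories whose Frobenius-Schur exponent is as large as the Theorem permits, a classification I would take from the proof of the Theorem; the remainder is bookkeeping.

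Since $\mathrm{Ndim}$, $\mathrm{FSexp}$, and each property occurring in the statement (being pointed, having Fibonacci or Ising fusion rules, and the relevant Deligne factorizations) are Galois invariant, I would first replace $\mathcal{C}$ by the pseudounitary modular fusion category it is conjugate to, so assume $\mathcal{C}$ is pseudounitary. Then $\mathrm{FSexp}(\mathcal{C})=p^a$ for a prime $p$ and $\mathrm{Ndim}(\mathcal{C})=p^b$ by \cite[Theorem 3.9]{MR3486174}, and we must show $a\geq b$ forces $\mathcal{C}$ into the stated list. I would then write $\mathcal{C}\simeq\mathcal{C}_1\boxtimes\cdots\boxtimes\mathcal{C}_m$ with each $\mathcal{C}_i$ a prime modular fusion category --- this exists since a modular subcategory splits off its M\"uger centralizer, and terminates on dimension grounds. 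Each $\mathcal{C}_i$ is again pseudounitary and embeds in $\mathcal{C}$ as a fusion subcategory, so $\mathrm{FSexp}(\mathcal{C}_i)=p^{a_i}$ divides $p^a$ and, with \cite[Theorem 3.9]{MR3486174}, $\mathrm{Ndim}(\mathcal{C}_i)=p^{b_i}$. Because ribbon twists multiply under $\boxtimes$ and all orders in sight are powers of $p$, one gets $a=\max_i a_i$; and because $\mathrm{Ndim}$ is multiplicative over the Deligne factorization of a pseudounitary modular fusion category (a point needing proof, since this identity fails for general modular fusion categories), one gets $b=\sum_i b_i$.

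With $i_0$ chosen so that $a_{i_0}=a$, applying the Theorem to $\mathcal{C}_{i_0}$ gives $a_{i_0}\leq b_{i_0}$ for odd $p$ and $a_{i_0}\leq b_{i_0}+2$ for $p=2$, so $a\geq b$ forces $\sum_{i\neq i_0}b_i=0$ for odd $p$ --- hence $\mathcal{C}\simeq\mathcal{C}_{i_0}$ is prime with $\mathrm{FSexp}(\mathcal{C})=\mathrm{Ndim}(\mathcal{C})$ --- and $\sum_{i\neq i_0}b_i\leq 2$ for $p=2$, so the remaining factors together form a pointed modular fusion category of dimension at most $2^2$, unless one of them is an Ising category (in which case it is the only nontrivial one). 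I would then invoke, from the proof of the Theorem, the classification of the prime pseudounitary modular fusion categories of prime power dimension whose Frobenius-Schur exponent attains, or for $p=2$ comes within a factor of $4$ of, the bound: for odd $p$ exactly the pointed categories and the Fibonacci modular fusion categories, and for $p=2$ the pointed categories together with the Ising modular fusion category. For odd $p$ this completes the proof. For $p=2$, this classification makes $\mathcal{C}_{i_0}$ pointed or Ising (since $a_{i_0}=a\geq b\geq b_{i_0}$), and a short case analysis --- distinguishing whether the remaining factors are pointed or reduce to an Ising category, and using $a\geq b$ once more to bound $\mathrm{Ndim}(\mathcal{C}_{i_0})$ in the latter situation --- shows that $\mathcal{C}$ is pointed, a product of two Ising categories, or an Ising category times a pointed modular fusion category of dimension $1$, $2$, or $2^2$, as asserted.

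I expect the crux to be the prime classification: ruling out prime non-pointed modular fusion categories of large prime power dimension with Frobenius-Schur exponent abnormally close to $\mathrm{Ndim}$. This is where the explicit classification of modular fusion categories of Frobenius-Perron dimension $p^5$ settles the small cases, and a descent argument --- an adjoint, pointed, or M\"uger-centralizer subcategory contributes a multiplicative factor to $\mathrm{Ndim}$ that outgrows the order of the twists --- settles the rest. A secondary point requiring care is the multiplicativity of $\mathrm{Ndim}$ over the Deligne factorization, which in this pseudounitary setting must be checked directly, using the scarcity of low-dimensional pseudounitary modular fusion categories.
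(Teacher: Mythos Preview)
Your prime decomposition does not bypass the hard step: the ``classification of prime pseudounitary modular fusion categories achieving the bound'' that you propose to invoke \emph{is} this Corollary applied to each $\mathcal{C}_i$. The Theorem supplies only the inequality; the residual case its proof leaves open (Propositions~\ref{thm:odd}--\ref{thm:fooor}: $\dim(\mathcal{C})\in\mathbb{Z}$ together with a simple object of small $\mathcal{M}$) is exactly what Lemmas~\ref{lemon}, \ref{lemon2}, Corollary~\ref{oddcor1}, and Proposition~\ref{corcor} resolve, and none of those require primeness. The factorization therefore accomplishes nothing, and your closing ``descent argument'' sketch is not how those results are actually proved. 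There is also a concrete error in your bookkeeping: the equality $b=\sum_i b_i$ fails already for $\mathcal{C}_1$ pointed of dimension $5$ and $\mathcal{C}_2$ Fibonacci, where $\mathrm{Ndim}(\mathcal{C}_1)=\mathrm{Ndim}(\mathcal{C}_2)=5$ but $\dim(\mathcal{C}_1\boxtimes\mathcal{C}_2)=(25+5\sqrt{5})/2$ has norm $5^3$. (The inequality $b\geq\sum_i b_i$ does hold in this pseudounitary prime-power setting---the fields $\mathbb{Q}(\dim(\mathcal{C}_i))\subset\mathbb{Q}(\zeta_{p^a})^+$ are totally ordered and a product of maximal conjugates is fixed only by the identity---and that inequality would suffice for your argument, but it is not what you asserted.)

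The paper's route is different in kind: rather than factoring $\mathcal{C}$, it passes to $\overline{\mathcal{C}}=\prod_\sigma\mathcal{C}^\sigma$ to force integer global dimension, locates a simple $X$ with $\mathrm{FSexp}(\mathcal{C})\mid\mathrm{ord}(t_{X,X})$ (Lemma~\ref{theelemma}), and bounds the Galois orbit $\mathcal{O}_X^t$ from below via the normalized trace $\mathcal{M}(\dim(X))$ (Lemma~\ref{theefirstlemma}). The known list of totally real cyclotomic integers with small $\mathcal{M}$ then pins down $\dim(X)$, and a direct structural analysis (pointedness arguments for odd $p$, the weakly integral classifications for $p=2$) completes the classification without ever decomposing $\mathcal{C}$ into primes.
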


By passing to the double of a pseudounitary fusion category $\mathcal{C}$, the modular fusion category $\mathcal{Z}(\mathcal{C})$ with $\dim(\mathcal{Z}(\mathcal{C}))=\dim(\mathcal{C})^2$, the above theorem implies the following.
\begin{corollary*}
Let $\mathcal{C}$ be Galois conjugate to a pseudounitary fusion category.  If $\mathrm{FSexp}(\mathcal{C})$ is a prime power, then $\mathrm{FSexp}(\mathcal{C})<\mathrm{Ndim}(\mathcal{C})^2$ unless $\mathcal{C}\simeq\mathrm{Vec}^\omega_G$ for a cyclic $p$-group $G$ and $\omega\in H^3(G,\mathbb{C}^\times)$ any generator, or $\mathcal{C}$ is an Ising fusion category.
\end{corollary*}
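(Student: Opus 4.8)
The plan is to reduce to the modular case via the Drinfeld center and then apply the preceding corollary. First I would fix a pseudounitary fusion category $\mathcal{D}$ that is Galois conjugate to $\mathcal{C}$; then $\mathcal{D}$ is spherical for its canonical structure, $\mathcal{C}=\mathcal{D}^{\sigma}$ is spherical for the transported structure, and since the Drinfeld center is built functorially from the underlying categorical data, $\mathcal{Z}(\mathcal{C})$ is Galois conjugate to $\mathcal{Z}(\mathcal{D})$. The latter is modular with $\dim(\mathcal{Z}(\mathcal{D}))=\dim(\mathcal{D})^{2}=\mathrm{FPdim}(\mathcal{D})^{2}=\mathrm{FPdim}(\mathcal{Z}(\mathcal{D}))$, hence pseudounitary, so $\mathcal{Z}(\mathcal{C})$ is Galois conjugate to a pseudounitary modular fusion category. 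By \cite[Corollary 7.8]{MR2313527} one has $\mathrm{FSexp}(\mathcal{C})=\mathrm{FSexp}(\mathcal{Z}(\mathcal{C}))$, a prime power by hypothesis; and since $\dim(\mathcal{C})$ is totally positive (each Galois conjugate of $\mathcal{C}$ is again a spherical fusion category, of global dimension at least $1$), one gets $\mathbb{Q}(\dim(\mathcal{C})^{2})=\mathbb{Q}(\dim(\mathcal{C}))$ and therefore $\mathrm{Ndim}(\mathcal{Z}(\mathcal{C}))=\mathrm{Ndim}(\mathcal{C})^{2}$. Thus $\mathrm{FSexp}(\mathcal{C})<\mathrm{Ndim}(\mathcal{C})^{2}$ is equivalent to $\mathrm{FSexp}(\mathcal{Z}(\mathcal{C}))<\mathrm{Ndim}(\mathcal{Z}(\mathcal{C}))$.

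Applying the preceding corollary to $\mathcal{Z}(\mathcal{C})$, this strict inequality holds unless $\mathcal{Z}(\mathcal{C})$ is pointed, a Fibonacci modular fusion category, a product of two Ising modular fusion categories, or the product of an Ising modular fusion category with a pointed modular fusion category of dimension $1$, $2$, or $2^{2}$. It then remains to decide which of these actually occur as Drinfeld centers and to pin down the corresponding $\mathcal{C}$, using that $\mathcal{Z}(\mathcal{C})$ carries the canonical Lagrangian algebra $\bigoplus_{X}X\otimes X^{\ast}$ and that, conversely, a fusion category is recovered up to equivalence from its center together with a Lagrangian algebra. I would first discard the two impossible families. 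A Fibonacci (or Galois-conjugate Yang--Lee) modular fusion category is prime and its only connected étale algebra is the unit, so it is not a Drinfeld center. In a product $\mathcal{I}\boxtimes\mathcal{P}$ of an Ising modular fusion category $\mathcal{I}$ and a pointed modular fusion category $\mathcal{P}$ of dimension $d\in\{1,2,4\}$, the twist of a simple $\sigma\boxtimes x$ is $\theta_{\sigma}\theta_{x}$ with $\theta_{\sigma}$ a primitive $16$th root of unity and $\theta_{x}$ an $8$th root of unity, hence never $1$; so any connected étale algebra omits the $\sqrt{2}$-dimensional simple of $\mathcal{I}$ and therefore lies in the pointed fusion subcategory, of dimension $2d$. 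A Lagrangian algebra of $\mathcal{I}\boxtimes\mathcal{P}$ has dimension $2\sqrt{d}$, which is not an integer for $d=2$, and for $d\in\{1,4\}$ an inspection of the discriminant form of the pointed subcategory — whose radical already carries the value $\theta_{\epsilon}=-1$ of the fermion of $\mathcal{I}$ — shows there is no isotropic subgroup of order $2\sqrt{d}$, so no Lagrangian algebra exists and no Drinfeld center arises.

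It then remains to treat the pointed case and the case of a product of two Ising modular fusion categories. If $\mathcal{Z}(\mathcal{C})$ is pointed then $\mathcal{C}\simeq\mathrm{Vec}^{\omega}_{G}$ for a finite abelian group $G$, and $\mathcal{Z}(\mathcal{C})$ is the pointed modular fusion category attached to the $\omega$-twisted double of $G$, of dimension $|G|^{2}$, with $\mathrm{FSexp}(\mathcal{C})$ equal to the order of its associated quadratic form; since this order is a prime power, $G$ is a $p$-group, and computing the twists of the twisted double (or invoking the classification of extremal pointed modular fusion categories used in the proof of the theorem) shows this order reaches $|G|^{2}=\mathrm{Ndim}(\mathcal{C})^{2}$ exactly when $G$ is cyclic and $\omega$ is a generator of $H^{3}(G,\mathbb{C}^{\times})$, with the bound then attained. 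If $\mathcal{Z}(\mathcal{C})$ is a product of two Ising modular fusion categories then $\dim(\mathcal{C})=4$ and $\mathcal{C}$ is not pointed; the existence of a Lagrangian algebra forces the two factors to be reverses of one another (the algebra must contain a simple of the form $\sigma\boxtimes\sigma$, which is a boson only when the two values of $\theta_{\sigma}$ are mutually inverse), so $\mathcal{Z}(\mathcal{C})\simeq\mathcal{I}\boxtimes\overline{\mathcal{I}}\simeq\mathcal{Z}(\mathcal{I})$ for an Ising fusion category $\mathcal{I}$, and reconstruction from the Lagrangian algebra identifies $\mathcal{C}$ with an Ising fusion category, for which $\mathrm{FSexp}(\mathcal{C})=16=4^{2}=\mathrm{Ndim}(\mathcal{C})^{2}$. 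Assembling the cases yields the statement.

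The main obstacle is the pointed case: determining exactly when the order of the quadratic form of the $\omega$-twisted double of an abelian $p$-group reaches $|G|^{2}$ requires precise control of the $T$-matrix of that double and will lean on the earlier analysis of extremal pointed modular fusion categories. The remaining verifications — that Fibonacci and ``Ising times small pointed'' are not Drinfeld centers, and that a center which is a product of two Isings must come from an Ising fusion category — are elementary but require the small discriminant-form and Lagrangian-algebra computations indicated above.
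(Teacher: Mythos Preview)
Your proposal is correct and follows the same high-level architecture as the paper: pass to the Drinfeld center, apply the modular classification (the first Corollary), and then decide which of the extremal modular categories actually arise as centers.  Where you diverge is in the mechanism for ruling out the non-centers.

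The paper disposes of these cases much more quickly.  For Fibonacci it simply observes that the multiplicative central charge is nontrivial, so it cannot be a center.  For $\mathcal{I}\boxtimes\mathcal{D}$ with $\mathcal{D}$ pointed of dimension $1,2,4$ (and simultaneously for $\mathcal{I}\boxtimes\mathcal{I}'$) the paper argues on the $\mathcal{C}$ side rather than the $\mathcal{Z}(\mathcal{C})$ side: one has $\mathrm{FPdim}(\mathcal{C})\leq 4$, and the forgetful functor forces $\mathcal{C}$ to contain a simple of Frobenius--Perron dimension $\sqrt{2}$; the Ising fusion categories are the only such categories, and their centers are products of two Isings, never Ising times pointed.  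This handles the exclusion and the identification of $\mathcal{C}$ in the $\mathcal{I}\boxtimes\mathcal{I}'$ case in one stroke, without any Lagrangian-algebra bookkeeping.  For the pointed case the paper just cites Example~\ref{ex:group}, exactly as you do.

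Your route via Lagrangian/\'etale algebras is valid --- the twist argument showing $\sigma\boxtimes x$ is never a boson is clean, and the discriminant-form checks for $d\in\{1,4\}$ do go through --- but it is longer, and the $d=4$ ``inspection'' would need to be spelled out for each pointed modular category of dimension~$4$.  Note that all of your exclusions are in fact subsumed by the single observation that the central charge of $\mathcal{I}\boxtimes\mathcal{P}$ is a half-integer plus an integer, hence never zero mod~$8$; you might prefer that one-line replacement.  Similarly, your reconstruction argument for $\mathcal{I}\boxtimes\mathcal{I}'$ is correct but could be shortened to: $\mathrm{FPdim}(\mathcal{C})=4$, $\mathcal{C}$ is not pointed (else its center would be), hence $\mathcal{C}$ is Ising.
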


The proof of this result is a culmination of the results in Section \ref{sec:sec}, and in particular Corollaries \ref{oddcor} and \ref{evencor}.  To prove this result requires an explicit classification of modular fusion categories of Frobenius-Perron dimension $p^n$ for $n\leq5$ (Proposition \ref{bigprop}) which extends previous results of the first two authors \cite{MR4516198}.  

\par We anticipate that the results contained in this manuscript will be of great use in the future classification of fusion and modular fusion categories.  In particular, it would be of interest to extend the above to nonpseudounitary modular fusion categories as there exist rank $6$ modular fusion categories $\mathcal{C}(\mathfrak{so}_5,9,q)_\mathrm{ad}$ where $q$ is any primitive $18$th root of unity, of global dimension $3^2$ and Frobenius-Schur exponent $3^2$ which is emphatically nonpseudounitary \cite{MR4327964} (refer to Example \ref{ex:non} for details).   It would be equally interesting to find these are the only other (nonpseudounitary) modular fusion categories $\mathcal{C}$, aside from the above list, for which $\mathrm{FSexp}(\mathcal{C})\geq\mathrm{Ndim}(\mathcal{C})$ as it would be to discover they lie in some yet-to-be-described infinite family of examples.

\section*{Acknowledgement}
The research of Julia Plavnik was partially supported by NSF grant DMS-2146392 and by Simons Foundation Award 889000 as part of the Simons Collaboration on Global Categorical Symmetries.  Julia Plavnik would also like to thank the hospitality and excellent working conditions at the Department of Mathematics at Universit\"at Hamburg where she has carried out part of this research as an Experienced Fellow of the Alexander Humboldt Foundation.  The research of Andrew Schopieray was partially supported by a Participating Faculty Research Award from the Office of the Provost of Marquette University.


\section{Preliminaries and notation}\label{defs}

\subsection{Fusion and modular fusion categories}\label{sub:1}

Our main objects of study are fusion and modular fusion categories (over $\mathbb{C}$).  \emph{Fusion categories} $\mathcal{C}$ are $\mathbb{C}$-linear semisimple rigid monoidal categories with finitely many isomorphism classes of simple objects, the set of which will be denoted $\mathcal{O}(\mathcal{C})$, and a simple monoidal unit $\mathbbm{1}_\mathcal{C}$.  \emph{Braided} fusion categories are fusion categories which are commutative in a constrained manner: for all $X,Y\in\mathcal{C}$ there exist natural isomorphisms $c_{X,Y}:X\otimes Y\stackrel{\sim}{\to}Y\otimes X$ satisfying compatibility conditions \cite[Definition 8.1.1]{tcat}.  There are often a variety of braiding structures for a given fusion category \cite{MR3943750}.  A fusion category is \emph{spherical} if for all $X\in\mathcal{C}$ there exist natural isomorphisms $a_X:X\stackrel{\sim}{\to}X^{\ast\ast}$ such that $a_{X\otimes Y}=a_X\otimes a_Y$ for all $Y\in\mathcal{C}$ and $\dim(X)=\dim(X^\ast)$ where $\dim(X):=\mathrm{Tr}(a_X)$.  These \emph{categorical dimensions} form a real character of the Grothendieck ring of a spherical fusion category $\mathcal{C}$ \cite[Proposition 4.7.12]{tcat}, i.e.\ a ring homomorphism $\dim:K(\mathcal{C})\to\mathbb{R}$.  This may or may not coincide with the Frobenius-Perron dimension character $\mathrm{FPdim}:K(\mathcal{C})\to\mathbb{C}$ which is simply the Frobenius-Perron eigenvalues of the fusion matrices of $X\in\mathcal{O}(\mathcal{C})$.  It is known that when the following two notions of dimension of a fusion category are equal,
\begin{equation}
\dim(\mathcal{C}):=\sum_{X\in\mathcal{O}(\mathcal{C})}\dim(X)^2=\sum_{X\in\mathcal{O}(\mathcal{C})}\mathrm{FPdim}(X)^2=:\mathrm{FPdim}(\mathcal{C}),
\end{equation}
a condition called \emph{pseudounitarity}, then there exists a canonical spherical structure such that $\dim=\mathrm{FPdim}$ as characters.  Pseudounitary is a very common assumption in applications to mathematical physics, but there do exist fusion rules coming from the representation theory of quantum groups which are categorifiable, but only by nonpseudounitary fusion categories \cite{MR4327964}.

\par For a braided fusion category $\mathcal{C}$, there is a notion related to spherical structures known as a \emph{ribbon} structure.  This is an autoequivalence $\theta\in\mathrm{Aut}(\mathrm{id}_\mathcal{C})$ such that for all $X,Y\in\mathcal{C}$, $\theta_{X\otimes Y}=(\theta_X\otimes\theta_Y)c_{Y,X}c_{X,Y}$ and $(\theta_X)^\ast=\theta_{X^\ast}$.  Therefore for simple $X\in\mathcal{C}$ we can and will purposefully confuse $\theta_X$, an endomorphism of a simple object, with a nonzero scalar $\theta_X\in\mathbb{C}$.  For a fixed braided fusion category, spherical and ribbon structures are equivalent via the formula $\theta_X^{-1}\dim(X)=\mathrm{Tr}(c_{X,X}^{-1})$.  We will refer to spherical braided (ribbon) fusion categories as \emph{premodular} fusion categories.

\par Let $\mathcal{C}$ be a premodular fusion category.  We define the $S$- and $T$-matrices of $\mathcal{C}$, whose rows and columns are indexed by the simple objects $X,Y\in\mathcal{O}(\mathcal{C})$, as
\begin{equation}
S_{X,Y}:=\mathrm{Tr}(c_{Y,X}c_{X,Y})\qquad\text{ and }\qquad T_{X,Y}:=\delta_{X,Y}\theta^{-1}_X.
\end{equation}
We say $\mathcal{C}$ is a \emph{modular} fusion category if the matrix $S$ is nondegenerate.  This name comes from the fact that for a modular fusion category, the assignment
\begin{equation}
\left[\begin{array}{cc}0 & -1 \\ 1 & 0\end{array}\right]\mapsto \dim(\mathcal{C})^{-1/2}S\qquad\left[\begin{array}{cc}1 & 1 \\ 0 & 1\end{array}\right]\mapsto T
\end{equation}
gives a projective representation of the modular group $\mathrm{SL}(2,\mathbb{Z})$.  The $T$-matrix entries are roots of unity \cite[Corollary 8.18.2]{tcat} and the $S$-matrix entries are cyclotomic \cite[Theorem 8.14.7]{tcat} algebraic integers contained in the cyclotomic field $\mathbb{Q}(\zeta_N)$ where $N:=\mathrm{ord}(T)$.  For each $m\in\mathbb{Z}$, one can define \cite[Section 3]{MR3997136} the Gauss sums
\begin{equation}
\tau_m^\pm(\mathcal{C}):=\sum_{X\in\mathcal{O}(\mathcal{C})}\dim(X)^2\theta_X^{\pm m}.
\end{equation}
Let $\gamma$ be any cube root of $\tau^+_1(\mathcal{C})/\sqrt{\dim(\mathcal{C})}$ where $\sqrt{\dim(\mathcal{C})}$ is the positive root.  The normalized matrices $s_{X,Y}:=S_{X,Y}\dim(\mathcal{C})^{-1/2}$ and $t_{X,Y}:=T_{X,Y}\gamma^{-1}$ will be necessary to discuss the Galois action for modular fusion categories.  The positive integer $n:=\mathrm{ord}(t)$ is related to $\mathrm{FSexp}(\mathcal{C})$ via the divisibility conditions $\mathrm{FSexp}(\mathcal{C})|n|12\cdot\mathrm{FSexp}(\mathcal{C})$ \cite[Theorem II]{dong2015congruence}.

\subsection{Galois action and number theory}\label{sec:galois}

\par For any Galois automorphism $\sigma\in\mathrm{Gal}(\overline{\mathbb{Q}}/\mathbb{Q})$ and fusion category $\mathcal{C}$, one may consider the Galois conjugate fusion category $\mathcal{C}^\sigma$ which has the same fusion rules as $\mathcal{C}$, but all categorical structures are conjugated by the automorphism $\sigma$ (refer to Sections 4.2--4.3 of \cite{davidovich2013arithmetic}).  If $\mathbb{K}$ is any cyclotomic number field, then $\overline{\mathbb{Q}}/\mathbb{K}$ and $\mathbb{K}/\mathbb{Q}$ are Galois extensions, so for any $\sigma\in\mathrm{Gal}(\mathbb{K}/\mathbb{Q})$, one may consider the Galois conjugate fusion category $\mathcal{C}^\sigma:=\mathcal{C}^{\tilde{\sigma}}$ where $\tilde{\sigma}\in\mathrm{Gal}(\overline{\mathbb{Q}}/\mathbb{Q})$ is such that $\tilde{\sigma}|_\mathbb{K}=\sigma$.  We define
\begin{equation}
\overline{\mathcal{C}}:=\prod_{\sigma\in\mathrm{Gal}(\mathbb{Q}(\dim(\mathcal{C}))/\mathbb{Q})}\mathcal{C}^\sigma.
\end{equation}
By design, for any fusion category $\mathcal{C}$, $\dim(\overline{\mathcal{C}})=\mathrm{Ndim}(\mathcal{C})$, the algebraic norm of $\dim(\mathcal{C})$, and $\mathrm{FSexp}(\mathcal{C})=\mathrm{FSexp}(\overline{\mathcal{C}})$.  When $\mathcal{C}$ is modular, there is also an associated Galois action on $\mathcal{O}(\mathcal{C})$.  That is for every $\sigma\in\mathrm{Gal}(\overline{\mathbb{Q}}/\mathbb{Q})$ there exists a permutation $\hat{\sigma}$ of $\mathcal{O}(\mathcal{C})$ such that
\begin{equation}
\sigma\left(\dfrac{S_{X,Y}}{S_{\mathbbm{1},Y}}\right)=\dfrac{S_{X,\hat{\sigma}(Y)}}{S_{\mathbbm{1},\hat{\sigma}(Y)}}.
\end{equation}
As a consequence,
\begin{equation}\label{galdim}
\dim(\hat{\sigma}(X))^2=\dfrac{\dim(\mathcal{C})}{\sigma(\dim(\mathcal{C}))}\sigma\left(\dim(X)^2\right)
\end{equation}
and so in particular, for all $X\in\mathcal{O}(\overline{\mathcal{C}})$,
\begin{equation}
\dim(\hat{\sigma}(X))^2=\sigma\left(\dim(X)^2\right).
\end{equation}
We will denote the orbit of $X\in\mathcal{O}(\mathcal{C})$ under the Galois action by $\mathcal{O}_X$.  Any $\sigma\in\mathrm{Gal}(\overline{\mathbb{Q}}/\mathbb{Q})$ acts on the $t$-matrix via square-Galois conjugacy, i.e.\ $\sigma^2(t_{X,X})=t_{\hat{\sigma}(X),\hat{\sigma}(X)}$.  The entries of the normalized $s$- and $t$-matrix being contained in $\mathbb{Q}(\zeta_{12\cdot\mathrm{FSexp}(\mathcal{C})})$ implies that the Galois action on simple objects is determined by $\mathrm{Gal}(\mathbb{Q}(\zeta_{12\cdot\mathrm{FSexp}(\mathcal{C})})/\mathbb{Q})$.    Therefore, it is useful to define the sub-orbit
\begin{equation}
\mathcal{O}_X^t:=\left\{\widehat{\sigma^2}(X):\sigma\in\mathrm{Gal}(\mathbb{Q}(\zeta_{12\cdot\mathrm{FSexp}(\mathcal{C})})/\mathbb{Q})\right\}\subset\mathcal{O}_X.
\end{equation}

\par Lastly, we introduce a small amount of notation related to algebraic number theory.  Let $\alpha$ be a real cyclotomic integer.  Define the normalized trace
\begin{equation}
\mathcal{M}(\alpha):=\dfrac{1}{[\mathbb{Q}(\alpha):\mathbb{Q}]}\mathrm{Tr}_{\mathbb{Q}(\alpha)/\mathbb{Q}}(\alpha^2).
\end{equation}
All such $\alpha$ with $\mathcal{M}(\alpha)<9/4$ were described in \cite[Corollary 9.0.3]{MR2786219} and subsequently those with $\mathcal{M}(\alpha)<14/5$ were described in \cite[Proposition 4.3]{MR3814339}.  The reason for caring about such a measurement is the following lemma.

\begin{lemma}\label{theefirstlemma}
Let $\mathcal{C}$ be a modular fusion category and $X\in\mathcal{O}(\mathcal{C})$ with $\zeta:=t_{X,X}$.  If $\dim(\mathcal{C})\in\mathbb{Z}$, then $\dim(\mathcal{O}_X^t)\geq[\mathbb{Q}(\zeta)^+:\mathbb{Q}]\mathcal{M}(\dim(X))$.
\end{lemma}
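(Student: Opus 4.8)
The plan is to compute $\dim(\mathcal{O}_X^t)$ directly from the Galois action and reduce the inequality to a single index estimate. First I would use $\dim(\mathcal{C})\in\mathbb{Z}$, which gives $\mathbb{Q}(\dim(\mathcal{C}))=\mathbb{Q}$, hence $\overline{\mathcal{C}}=\mathcal{C}$ and a trivial correction factor in \eqref{galdim}: so $\dim(\hat\sigma(Y))^2=\sigma(\dim(Y)^2)$ for all $Y\in\mathcal{O}(\mathcal{C})$ and all $\sigma$. In particular the squared dimension of any object Galois‑conjugate to $X$ is a Galois conjugate of $\dim(X)^2$, and I may write $\dim(\hat\sigma(X))=\varepsilon(\sigma)\,\sigma(\dim(X))$ with $\varepsilon(\sigma)\in\{\pm1\}$. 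Next, $\zeta=t_{X,X}$ is a root of unity because the $t$‑matrix has finite order $n$ with $\mathrm{FSexp}(\mathcal{C})\mid n\mid 12\cdot\mathrm{FSexp}(\mathcal{C})$, so $\mathbb{Q}(\zeta)$ is cyclotomic, $\mathbb{Q}(\zeta)^+$ is its maximal totally real subfield, and $[\mathbb{Q}(\zeta):\mathbb{Q}(\zeta)^+]\le2$. I then fix $\Gamma:=\mathrm{Gal}(\mathbb{Q}(\zeta_{12\cdot\mathrm{FSexp}(\mathcal{C})})/\mathbb{Q})$, through which the Galois action on $\mathcal{O}(\mathcal{C})$ factors, and $\Gamma^{\square}:=\{\sigma^2:\sigma\in\Gamma\}$; by definition $\mathcal{O}_X^t$ is the orbit of $X$ under $\Gamma^{\square}$.

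The second step is bookkeeping. Choosing coset representatives identifies $\mathcal{O}_X^t$ with $\Gamma^{\square}/\mathrm{Stab}_{\Gamma^{\square}}(X)$, and $\dim(\widehat{\tau}(X))^2=\tau(\dim(X)^2)$ depends only on $\tau$ modulo $L_{\square}:=\Gamma^{\square}\cap\mathrm{Gal}(\overline{\mathbb{Q}}/\mathbb{Q}(\dim(X)^2))$, which contains $\mathrm{Stab}_{\Gamma^{\square}}(X)$. Hence
\[
\dim(\mathcal{O}_X^t)=\sum_{Y\in\mathcal{O}_X^t}\dim(Y)^2=\bigl[L_{\square}:\mathrm{Stab}_{\Gamma^{\square}}(X)\bigr]\cdot\!\!\sum_{\tau\in\Gamma^{\square}/L_{\square}}\!\!\tau(\dim(X)^2).
\]
The second factor is the partial trace of $\dim(X)^2$ over the image of $\Gamma^{\square}$ in $\mathrm{Gal}(\mathbb{Q}(\dim(X)^2)/\mathbb{Q})$; since all $\tau(\dim(X)^2)$ are positive, the lemma will follow once I show that the product of this partial trace with the multiplicity $m_X:=[L_{\square}:\mathrm{Stab}_{\Gamma^{\square}}(X)]$ dominates $[\mathbb{Q}(\zeta)^+:\mathbb{Q}]\,\mathcal{M}(\dim(X))$, using the identity $\mathcal{M}(\dim(X))=\tfrac{1}{[\mathbb{Q}(\dim(X)^2):\mathbb{Q}]}\mathrm{Tr}_{\mathbb{Q}(\dim(X)^2)/\mathbb{Q}}(\dim(X)^2)$ (valid whether or not $\dim(X)\in\mathbb{Q}(\dim(X)^2)$, since $[\mathbb{Q}(\dim X):\mathbb{Q}]$ and $\mathrm{Tr}_{\mathbb{Q}(\dim X)/\mathbb{Q}}$ rescale by the same factor $[\mathbb{Q}(\dim X):\mathbb{Q}(\dim X^2)]$).

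The main obstacle is the lower bound on $m_X$, and here the leverage is square‑Galois conjugacy: if $\widehat{\tau}(X)=X$ for $\tau\in\Gamma^{\square}$ then $\tau^2(\zeta)=t_{\widehat\tau(X),\widehat\tau(X)}=\zeta$, so $\tau^2$ fixes $\zeta$, hence $\mathbb{Q}(\zeta)$, hence $\mathbb{Q}(\zeta)^+$; writing $\tau=\rho^2$ this forces $\rho^4$ to fix $\mathbb{Q}(\zeta)^+$, confining $\mathrm{Stab}_{\Gamma^{\square}}(X)$ to a subgroup of $\Gamma^{\square}$ whose index I must show matches, up to exactly the discrepancy between the full trace of $\dim(X)^2$ and the partial trace above, the quantity $[\mathbb{Q}(\zeta)^+:\mathbb{Q}]$. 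It is precisely here that the totally real field $\mathbb{Q}(\zeta)^+$ rather than $\mathbb{Q}(\zeta)$ appears: complex conjugation fixes every categorical dimension (dimensions are real) while inverting $\zeta$, so the factor $[\mathbb{Q}(\zeta):\mathbb{Q}(\zeta)^+]$ is invisible to the dimensions and is instead absorbed into the sign ambiguity $\varepsilon(\sigma)=\pm1$ together with the passage to squares. Carrying out this index computation — balancing the index drop in the stabilizer against the index drop from $\mathrm{Gal}(\mathbb{Q}(\dim X^2)/\mathbb{Q})$ to the image of $\Gamma^{\square}$ — is the technical heart; once it is done, substitution into the displayed formula for $\dim(\mathcal{O}_X^t)$ yields the asserted inequality, and everything else is formal.
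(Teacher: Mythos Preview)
Your setup is sound --- the consequence of $\dim(\mathcal{C})\in\mathbb{Z}$ on squared dimensions and the square-Galois constraint on $t$-values are indeed the two ingredients --- but the proof is explicitly incomplete: you defer the ``technical heart'' (the index computation balancing $m_X$ against the partial trace) and it is not routine. Your stabilizer bound says only that if $\hat{\tau}(X)=X$ for $\tau\in\Gamma^{\square}$ then $\tau^2$ fixes $\zeta$; this confines $\mathrm{Stab}_{\Gamma^{\square}}(X)$ to the preimage of the $2$-torsion of the image of $\Gamma^{\square}$ in $\mathrm{Gal}(\mathbb{Q}(\zeta)/\mathbb{Q})$, which controls \emph{fourth} powers of $\Gamma$ rather than squares. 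Recovering $[\mathbb{Q}(\zeta)^+:\mathbb{Q}]$ from that, while simultaneously tracking the discrepancy between the full trace of $\dim(X)^2$ and your partial trace over $\Gamma^{\square}/L_{\square}$ (which is a trace over the image of $\Gamma^{\square}$ in $\mathrm{Gal}(\mathbb{Q}(\dim(X)^2)/\mathbb{Q})$, a field with no a~priori relation to $\mathbb{Q}(\zeta)^+$), is not something you can wave at.

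The paper sidesteps the orbit--stabilizer bookkeeping entirely. Rather than bounding the stabilizer from above, it works in the forward direction: the squaring endomorphism of $\mathrm{Gal}(\mathbb{Q}(\zeta)/\mathbb{Q})$ has the same kernel as restriction to $\mathrm{Gal}(\mathbb{Q}(\zeta)^+/\mathbb{Q})$ (namely the maximal elementary abelian $2$-subgroup), so its image is canonically identified with $\mathrm{Gal}(\mathbb{Q}(\zeta)^+/\mathbb{Q})$. For each $\sigma$ in this image one has $\hat{\sigma}(X)\in\mathcal{O}_X^t$ (since $\sigma$ is already a square) and the $t$-values $t_{\hat{\sigma}(X),\hat{\sigma}(X)}$ separate these objects, so one obtains $[\mathbb{Q}(\zeta)^+:\mathbb{Q}]$ pairwise distinct members of $\mathcal{O}_X^t$. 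Summing $\dim(\hat{\sigma}(X))^2=\sigma(\dim(X)^2)$ over this copy of $\mathrm{Gal}(\mathbb{Q}(\zeta)^+/\mathbb{Q})$ yields $\mathrm{Tr}_{\mathbb{Q}(\zeta)^+/\mathbb{Q}}(\dim(X)^2)=[\mathbb{Q}(\zeta)^+:\mathbb{Q}]\,\mathcal{M}(\dim(X))$ in one line. In short: use square-Galois conjugacy to \emph{produce} distinguishable orbit elements, not in the contrapositive to bound the stabilizer --- this saves you the extra square and the entire index chase.
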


\begin{proof}
If $\dim(\mathcal{C})\in\mathbb{Z}$, for all $\sigma\in\mathrm{Gal}(\overline{\mathbb{Q}}/\mathbb{Q})$, $\sigma(\dim(X)^2)=\dim(\hat{\sigma}(X))^2$ by Equation (\ref{galdim}). We may identify $\mathrm{Gal}(\mathbb{Q}(\zeta)^+/\mathbb{Q})$ as a quotient of $\mathrm{Gal}(\mathbb{Q}(\zeta)/\mathbb{Q})$ via the restriction map $\pi:\mathrm{Gal}(\mathbb{Q}(\zeta)/\mathbb{Q})\twoheadrightarrow\mathrm{Gal}(\mathbb{Q}(\zeta)^+/\mathbb{Q})$.  Let $\Gamma$ be the automorphism of $\mathrm{Gal}(\mathbb{Q}(\zeta)/\mathbb{Q})$ such that $\sigma\mapsto\sigma^2$.  Evidently $\mathrm{im}(\Gamma)\subset\mathrm{Gal}(\mathbb{Q}(\zeta)/\mathbb{Q})$ is a subgroup as well as a quotient.  The kernel of $\pi$ and $\Gamma$ are both the maximal elementary abelian $2$-group of $\mathrm{Gal}(\mathbb{Q}(\zeta)/\mathbb{Q})$, so there is a canonical isomorphism $\iota:\mathrm{Gal}(\mathbb{Q}(\zeta)^+/\mathbb{Q})\stackrel{\sim}{\to}\mathrm{im}(\Gamma)$.  Under this identification, $X\neq\hat{\sigma}(X)$ and $\hat{\sigma}(X)\subset\mathcal{O}_X^t$ for all $\sigma\in\mathrm{Gal}(\mathbb{Q}(\zeta)^+/\mathbb{Q})$ as $t_{X,X}\neq t_{\hat{\sigma}(X),\hat{\sigma}(X)}$.  Moreover
\begin{align}
\dim(\mathcal{O}_X^t)\geq\sum_{\sigma\in\mathrm{Gal}(\mathbb{Q}(\zeta)^+/\mathbb{Q})}\dim(\hat{\sigma}(X))^2&=\sum_{\sigma\in\mathrm{Gal}(\mathbb{Q}(\zeta)^+/\mathbb{Q})}\sigma(\dim(X)^2) \\
&=\mathrm{Tr}_{\mathbb{Q}(\zeta)^+/\mathbb{Q}}(\dim(X)^2) \\
&=[\mathbb{Q}(\zeta)^+:\mathbb{Q}]\mathcal{M}(\dim(X)).
\end{align}
\end{proof}



\section{Examples}\label{sec:ex}

Here we include a variety of examples of spherical fusion categories along with their Frobenius-Schur exponents and global dimensions.

\begin{example}\label{ex:group}
Let $G$ be a finite group and $\omega\in H^3(G,\mathbb{C}^\times)$.  The categories of complex finite-dimensional $G$-graded vector spaces $\mathrm{Vec}_G^\omega$ are pseudounitary so we may consider them as spherical fusion categories equipped with their canonical spherical structure.  The Frobenius-Schur exponent of $\mathrm{Vec}_G^\omega$ was computed in \cite[Proposition 7.1]{MR2366965}.  For all $g\in G$, define $|g|,|\omega_g|\in\mathbb{Z}_{\geq1}$ to be the order of $g$ and the order of the class of $\omega$ restricted to the subgroup of $G$ generated by $g$.  Then precisely,
\begin{equation}
\mathrm{FSexp}(\mathrm{Vec}_G^\omega)=\mathrm{FSexp}(\mathcal{Z}(\mathrm{Vec}_G^\omega))=\mathrm{lcm}\{|g||\omega_g|:g\in G\}.
\end{equation}
For a cyclic group $C_n$, $n\in\mathbb{Z}_{\geq1}$, $H^3(C_n,\mathbb{C}^\times)\cong C_n$, thus $\mathrm{FSexp}(\mathrm{Vec}_G^\omega)=|G|^2=\mathrm{Ndim}(\mathrm{Vec}_G^\omega)^2$ if and only if $G$ is a cyclic group and $\omega\in H^3(C_n,\mathbb{C}^\times)$ is any generator.  More generally, a \emph{group-theoretical} fusion category \cite[Section 8.8]{ENO} $\mathcal{C}(G,H,\omega,\psi)$ is the category of $\mathbb{C}_\psi[H]$-bimodules in $\mathrm{Vec}_G^\omega$ where $G$ is a finite group, $H\subset G$ a subgroup, $\omega\in Z^3(G,\mathbb{C}^\times)$ a $3$-cocycle, and $\psi\in C^2(H,\mathbb{C}^\times)$ a $2$-cochain such that $d\psi=\omega|_H$.  We have $\mathrm{Ndim}(\mathrm{Vec}_G^\omega)=\mathrm{Ndim}(\mathcal{C}(G,H,\omega,\psi))=|G|$ \cite[Remark 8.41]{ENO} and $\mathrm{FSexp}(\mathrm{Vec}_G^\omega)=\mathrm{FSexp}(\mathcal{C}(G,H,\omega,\psi))$ \cite[Theorem 9.2]{MR2313527}.  Thus $\mathrm{FSexp}(\mathcal{C}(G,H,\omega,\psi))=\mathrm{Ndim}(\mathcal{C}(G,H,\omega,\psi))^2$ if and only if $G$ is a cyclic group and $\omega\in H^3(C_n,\mathbb{C}^\times)$ is any generator as well.
\end{example}

\begin{example}\label{ex:ising}
An \emph{Ising} fusion category is any fusion category $\mathcal{C}$ such that $\mathrm{FPdim}(\mathcal{C})=2^2$ and $\mathcal{C}$ is not pointed.  As $\mathcal{C}$ is weakly integral, $\mathrm{FPdim}(X)\in\{1,\sqrt{2}\}$ for all $X\in\mathcal{O}(\mathcal{C})$.  Therefore $\mathrm{rank}(\mathcal{C})=3$; let $\mathbbm{1}$, $\delta$, $X$ be representatives of the isomorphism classes of simple objects with $\mathrm{FPdim}(\delta)=1$ and $\mathrm{FPdim}(X)=\sqrt{2}$.  The nontrivial fusion rules of $\mathcal{C}$ are $\delta\otimes\delta\cong\mathbbm{1}$, $\delta\otimes X\cong X\otimes\delta\cong X$ and $X\otimes X\cong\mathbbm{1}\oplus\delta$.  There exist precisely $2$ inequivalent Ising fusion categories \cite[Proposition B.5]{DGNO} each with a choice of $2$ spherical structures such that $\dim(X)=\epsilon\sqrt{2}$ where $\epsilon=\pm1$; denote these categories $\mathcal{I}^\epsilon_\pm$.

\par It was shown in \cite[Appendix B]{DGNO} that there are $4$ distinct braidings compatible with each $\mathcal{I}^\epsilon_\pm$ yielding $16$ inequivalent Ising modular fusion categories indexed by primitive $16$th roots of unity $\zeta$ and signs $\epsilon=\pm1$.  With $d:=\zeta^2+\zeta^{-2}$, the modular data of the Ising modular fusion category associated to the pair $(\zeta,\epsilon)$ is
\begin{equation}
S=\left[\begin{array}{ccc}
1 & 1 & \epsilon d \\
1 & 1 & -\epsilon d \\
\epsilon d & -\epsilon d & 0
\end{array}\right]
\qquad
T=\left[\begin{array}{ccc}
1 & 0 & 0 \\
0 & -1 & 0 \\
0 & 0 & \epsilon\zeta^{-1}
\end{array}\right].
\end{equation}
Therefore in any case, $\mathrm{FSexp}(\mathcal{I}^\epsilon_\pm)=2^4=\mathrm{Ndim}(\mathcal{I}^\epsilon_\pm)^2$.
\end{example}

\begin{example}\label{ex:fib}
It was demonstrated in \cite{ostrik} that there are $4$ inequivalent fusion categories of rank $2$.  Two are the categories $\mathrm{Vec}^\pm_{C_2}$ where $\pm$ refer to the trivial and nontrivial $\omega\in H^3(C_2,\mathbb{C}^\times)$.  Two are Galois conjugate to one another and have the nontrivial fusion rule $X\otimes X\cong X\oplus\mathbbm{1}$ where $X$ is the nontrivial simple object; each has a choice of $2$ spherical structures such that $\dim(X)=(1/2)(1\pm\sqrt{5})$; we call these the \emph{Fibonacci} spherical fusion categories.  These can be realized by the spherical fusion categories underlying the modular fusion categories $\mathcal{C}(\mathfrak{sl}_2,5,q)_\mathrm{ad}$ where $q^2$ is any $5$th root of unity \cite[Section 4]{MR4079742}.  The associated modular data is
\begin{equation}
S=\left[\begin{array}{cc}
1 & q^2+q^{-2}+1 \\
q^2+q^{-2}+1 & -1 
\end{array}\right]
\qquad
T=\left[\begin{array}{cc}
1 & 0 \\
0 & q^4
\end{array}\right].
\end{equation}
Therefore in any case, $\mathrm{FSexp}(\mathcal{C}(\mathfrak{sl}_2,5,q)_\mathrm{ad})=5=\mathrm{Ndim}(\mathcal{C}(\mathfrak{sl}_2,5,q)_\mathrm{ad})$.
\end{example}

\begin{example}\label{ex:non}
Consider the set of modular fusion categories $\mathcal{C}(\mathfrak{so}_5,9,q)_\mathrm{ad}$ where $\zeta:=q^2$ is any $9$th root of unity.  This collection of categories is curious because their fusion rules cannot be categorified by any pseudounitary fusion category \cite{MR4327964}.  With $u:=\zeta-\zeta^2-\zeta^5$ and $\sigma\in\mathrm{Gal}(\mathbb{Q}(\zeta)^+/\mathbb{Q})$ such that $\sigma(\zeta)=\zeta^2$, then the modular data of $\mathcal{C}(\mathfrak{so}_5,9,q)_\mathrm{ad}$ is given by
\begin{align}
S&=\left[
\begin{array}{cccccc}
1 & - 1 & 1 & u & \sigma(u) & \sigma^2(u) \\
-1 & 1 & -1 & -\sigma(u) & -\sigma^2(u) & -u \\
1 & - 1 & 1 & \sigma^2(u) & u & \sigma(u) \\
u & -\sigma(u) & \sigma^2(u) & 1 & 1 & 1 \\
\sigma(u) & -\sigma^2(u) & u & 1 & 1 & 1 \\
\sigma^2(u) & -u & \sigma(u) & 1 & 1 & 1
\end{array}
\right] \\
T&=\mathrm{diagonal}(1,\zeta^6,\zeta^3,\zeta^5,\zeta^8,\zeta^2).
\end{align} 
The fusion rules of $\mathcal{C}(\mathfrak{so}_5,9,q)_\mathrm{ad}$ can be found in \cite[Example 1]{MR4327964} and one may observe that
\begin{equation}
\mathrm{FSexp}(\mathcal{C}(\mathfrak{so}_5,9,q)_\mathrm{ad})=\dim(\mathcal{C}(\mathfrak{so}_5,9,q)_\mathrm{ad})=\mathrm{Ndim}(\mathcal{C}(\mathfrak{so}_5,9,q)_\mathrm{ad})=3^2.
\end{equation}
\end{example}


\section{Prime power Frobenius-Perron dimension}

It is well-known that fusion categories $\mathcal{C}$ with $\mathrm{FPdim}(\mathcal{C})\in\{p,p^2\}$ for some prime $p\in\mathbb{Z}_{\geq2}$ are pointed \cite[Corollary 8.30, Proposition 8.32]{ENO}, unless $p=2$ and $\mathcal{C}$ is one of the $2$ Ising fusion categories (see Example \ref{ex:ising}).  More generally, every fusion category with $\mathrm{FPdim}(\mathcal{C})$ a prime power has a nontrivial grading \cite[Theorem 8.28]{ENO}.  For modular fusion categories this implies $\mathcal{C}_\mathrm{pt}$ is nontrivial.  We will explicitly describe in this section all modular fusion categories $\mathcal{C}$ with $\mathrm{FPdim}(\mathcal{C})=p^n$ for $n<6$.  Previously it was shown that for $n\leq4$ and $p$ an odd prime, all modular fusion categories $\mathcal{C}$ with $\mathrm{FPdim}(\mathcal{C})=p^n$ are pointed, and the same is true for $p=2$ if one assumes $\mathcal{C}$ is integral.  The first integral examples which are not pointed begin to appear when $n=5$, while many weakly integral examples constructed from the Ising modular fusion categories arise for $p=2$ when $n<5$.  In the remainder of this section we prove the following classification for $n\leq5$.

\begin{proposition}\label{bigprop}
Let $p\in\mathbb{Z}_{\geq2}$ be prime, $n\in\mathbb{Z}_{\geq1}$, and $\mathcal{C}$ a modular fusion category with $\mathrm{FPdim}(\mathcal{C})=p^n$.  If $n<6$, then either
\begin{enumerate}
\item $\mathcal{C}$ is pointed,
\item $p=2$ and $\mathcal{C}$ is either
\begin{enumerate}
\item a modular fusion subcategory of $\mathcal{Z}(\mathrm{Vec}_{Q_8}^\omega)$ of Frobenius-Perron dimension $2^5$ where $\omega\in H^3(Q_8,\mathbb{C}^\times)$ has order $2^3$ \cite[Section 3]{schopierayargentina},
\item a product of an Ising modular fusion category and a pointed modular fusion category of Frobenius-Perron dimension $1$, $2$, $2^2$, or $2^3$, or
\item a product of two Ising modular fusion category and a pointed modular fusion category of Frobenius-Perron dimension $1$ or $2$.
\end{enumerate}
\item $p$ is odd, $n=5$, $\mathrm{FSexp}(\mathcal{C})=p^2$ and $\mathcal{C}$ is a modular fusion subcategory of $\mathcal{Z}(\mathrm{Vec}_G^\omega)$ where $G$ is an extra-special $p$-group of order $p^3$ and $\omega\in H^3(G,\mathbb{C}^\times)$ has order $p$.
\end{enumerate}
\end{proposition}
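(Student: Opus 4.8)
The plan is to induct on $n$, using the known grading results to reduce to lower-dimensional pieces, and then to apply the number-theoretic bound of Lemma \ref{theefirstlemma} to control the structure of any non-pointed, non-Ising component. First I would dispense with $n\leq 4$: for $n\leq 2$ the category is pointed or Ising by \cite[Corollary 8.30, Proposition 8.32]{ENO}, and one cites the established classification for $n\leq 4$ (pointed when $p$ is odd, or for $p=2$ a product of Ising categories with a pointed modular category by the integral case plus the weakly-integral analysis). The substantive case is $n=5$. Here I would first use \cite[Theorem 8.28]{ENO} to obtain a faithful grading $\mathcal{C}=\bigoplus_{g\in E}\mathcal{C}_g$ by an elementary abelian $p$-group $E$, so that $\mathcal{C}_{\mathrm{pt}}$ is nontrivial and $\mathcal{C}_{\mathrm{ad}}$ is a modular (in the premodular sense) fusion subcategory of strictly smaller Frobenius-Perron dimension. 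Since $\mathcal{C}$ is modular, $\mathcal{C}_{\mathrm{pt}}$ and $\mathcal{C}_{\mathrm{ad}}$ satisfy $\mathrm{FPdim}(\mathcal{C}_{\mathrm{pt}})\cdot\mathrm{FPdim}(\mathcal{C}_{\mathrm{ad}})=\mathrm{FPdim}(\mathcal{C})=p^5$, and the de-equivariantization/Müger-center analysis lets me split $\mathcal{C}$ as a Deligne product of a pointed modular factor and a modular factor with trivial pointed part whenever the relevant subcategories are nondegenerate.

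Next I would treat the case $p$ odd. If $\mathcal{C}$ is not pointed, then by the above reduction some non-pointed modular factor $\mathcal{D}$ of prime-power dimension $p^m$ with $m\leq 5$ has trivial pointed subcategory, which already forces $m=5$ by the $n\leq 4$ results. So $\mathcal{C}=\mathcal{D}$ has $\mathcal{C}_{\mathrm{pt}}$ trivial — but this contradicts \cite[Theorem 8.28]{ENO} unless $\mathcal{C}$ is itself pointed. Hence the non-pointed case requires $\mathcal{C}_{\mathrm{pt}}$ nontrivial but not a direct factor, i.e.\ $\mathcal{C}_{\mathrm{pt}}$ degenerate; its Müger center then contains a Tannakian subcategory $\mathrm{Rep}(C_p)$, and de-equivariantizing gives a modular category of dimension $p^3$ with a faithful $C_p$-action. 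At this point I would invoke the gauging/extension description to identify $\mathcal{C}$ as a modular subcategory of $\mathcal{Z}(\mathrm{Vec}_G^\omega)$ for a group $G$ of order $p^3$; that $G$ is extra-special (rather than abelian or $C_{p^3}$, $C_{p^2}\times C_p$) follows from the requirement that $\mathcal{C}$ be non-pointed together with $\mathrm{FSexp}(\mathcal{C})=p^2$, which I would pin down using Lemma \ref{theefirstlemma}: a non-pointed simple object $X$ has $\dim(X)^2=p$, forcing $\mathcal{M}(\dim(X))=p$ and $[\mathbb{Q}(t_{X,X})^+:\mathbb{Q}]$ at most $\dim(\mathcal{O}_X^t)/p$, which bounds the order of $\theta_X$ and hence of $T$. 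That $\omega$ has order exactly $p$ is read off from the known $\mathrm{FSexp}$ formula for $\mathcal{Z}(\mathrm{Vec}_G^\omega)$ in Example \ref{ex:group}, matching $\mathrm{FSexp}(\mathcal{C})=p^2$.

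For $p=2$ I would run the parallel argument but now allowing Ising factors: the grading reduction produces, at each stage, either a pointed piece, an Ising piece (dimension $2^2$, appearing because $\mathrm{FPdim}$ can be $\sqrt 2$), or a piece of smaller $2$-power dimension to which induction applies. Bookkeeping the dimension count $2^5$ then yields the three listed possibilities: a genuinely new integral example inside $\mathcal{Z}(\mathrm{Vec}_{Q_8}^\omega)$ with $\omega$ of order $2^3$ (which one must exhibit and verify has the right dimension, citing \cite{schopierayargentina}), one Ising factor times pointed of dimension up to $2^3$, or two Ising factors times pointed of dimension up to $2$. The main obstacle I anticipate is the $p=2$, integral, non-pointed case: ruling out all integral modular categories of dimension $2^5$ except the $Q_8$-center subcategory requires combining the Lagrangian-subalgebra/Witt-class perspective with a careful analysis of possible fusion rings of dimension $32$ with an object of dimension $2$, and showing the $T$-matrix constraints force $\omega$ to have order $2^3$; the odd case is comparatively rigid because $\dim(X)^2=p$ leaves almost no room, whereas at $p=2$ one has both $\sqrt 2$ and $2$ available as dimensions and the extra-special versus almost-extra-special ($Q_8$ versus $D_8$) dichotomy must be resolved via the Frobenius-Schur indicator computation.
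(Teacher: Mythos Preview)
Your outline has the right architecture but two of its load-bearing steps do not hold up.

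\medskip

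\textbf{Odd primes.} Your de-equivariantization route has a genuine gap. You write that de-equivariantizing by a Tannakian $\mathrm{Rep}(C_p)$ yields a pointed modular category of dimension $p^3$ with a $C_p$-action, and then you ``invoke the gauging/extension description to identify $\mathcal{C}$ as a modular subcategory of $\mathcal{Z}(\mathrm{Vec}_G^\omega)$ for a group $G$ of order $p^3$''. There is no such direct bridge: knowing that the neutral component of the $C_p$-crossed de-equivariantization is pointed does not by itself pin down $G$, let alone force it to be extraspecial, and the gauging formalism does not produce embeddings into twisted doubles without substantial further input. The paper avoids this entirely. Its Lemma~\ref{prelem} proves directly, via a stabilizer-and-balancing argument, that if $\mathcal{C}$ is not pointed then $(\mathcal{C}_\mathrm{ad})_\mathrm{pt}\supseteq\mathrm{Rep}(C_p^2)$ and $p^2\leq\dim(\mathcal{C}_\mathrm{pt})\leq p^{n-3}$. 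For $n=5$ this forces $\dim(\mathcal{C}_\mathrm{pt})=p^2$ exactly, and then a short count shows $\mathcal{C}_\mathrm{ad}$ has the fusion rules of the character ring of an extraspecial $p$-group of order $p^3$. The identification with a modular subcategory of $\mathcal{Z}(\mathrm{Vec}_G^\omega)$ then comes from the minimal-modular-extension classification in \cite{schopierayargentina}, not from gauging. You also write ``a non-pointed simple object $X$ has $\dim(X)^2=p$'': this is false, since for odd $p$ the category is integral, so $\dim(X)\in\mathbb{Z}$ and the smallest noninvertible dimension is $p$, giving $\dim(X)^2=p^2$. The paper's $\mathrm{FSexp}=p^2$ bound does not use Lemma~\ref{theefirstlemma} here; it bounds the Galois orbit $\mathcal{O}_X^t$ for a noninvertible $X\not\in\mathcal{O}(\mathcal{C}_\mathrm{ad})$ directly, using that $X\not\cong X^\ast$ and that $\mathcal{O}(\mathcal{C})\setminus\mathcal{O}(\mathcal{F})$ is Galois-closed for a suitable $\mathcal{F}\subset\mathcal{C}_\mathrm{pt}$.

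\medskip

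\textbf{Even prime.} You correctly flag the integral non-pointed $2^5$ case as the main obstacle, but your proposal to resolve it via ``Lagrangian-subalgebra/Witt-class perspective'' and fusion-ring enumeration is not what works. The paper's argument is a direct grading analysis: one shows $|\mathcal{O}(\mathcal{C}_\mathrm{pt})|\in\{2^2,2^3\}$ by elementary divisibility, and then in the $|\mathcal{O}(\mathcal{C}_\mathrm{pt})|=2^3$ case produces a Tannakian invertible object in $\mathcal{C}_\mathrm{ad}$ which, by the balancing equation, centralizes every simple of dimension $2$ and hence lies in the symmetric center---contradiction. The $|\mathcal{O}(\mathcal{C}_\mathrm{pt})|=2^2$ case identifies $\mathcal{C}$ as a minimal modular extension of a specific near-group premodular category, and these are already classified in \cite{schopierayargentina} as the eight subcategories of $\mathcal{Z}(\mathrm{Vec}_{Q_8}^\omega)$. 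No Witt-group or $T$-matrix analysis is needed; the $D_8$-versus-$Q_8$ dichotomy you anticipate does not arise in this form.
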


For the remainder of this section, $\mathcal{C}$ will be a modular fusion category of integer Frobenius-Perron dimension and without loss of generality we may assume is equipped with its unique spherical structure such that $\mathrm{FPdim}=\dim$.  This will be implicit in the following proofs.  

\subsection{The even prime}

As mentioned above, the Ising fusion categories are the only fusion categories $\mathcal{C}$ with $\mathrm{FPdim}(\mathcal{C})\in\{2,2^2\}$ which are not pointed.  Now assume $\mathrm{FPdim}(\mathcal{C})=2^3$ and $\mathcal{C}$ is not pointed.  If there exists $X\in\mathcal{O}(\mathcal{C})$ with $\dim(X)=2$ then it must be unique.  Hence $X\cong X^\ast$ and therefore $\mathcal{C}$ has the fusion rules of the character ring of the dihedral group of order $8$ (refer to \cite[Proposition 4.1]{MR4658217}, for example).  In particular, $\mathcal{C}_\mathrm{ad}=\mathcal{C}_\mathrm{pt}$ is symmetrically braided and there exists a Tannakian subcategory of dimension $2$ \cite[Corollary 9.9.32]{tcat}.  Since $X$ is stabilized by tensoring with all invertible objects, this implies the symmetric center of $\mathcal{C}$ is nontrivial by the balancing equation \cite[Proposition 8.13.8]{tcat} and \cite[Proposition 2.5]{mug1}, thus $\mathcal{C}$ is not modular.  The only alternative is that there exist simple objects, $X,Y$ with $\dim(X)=\dim(Y)=\sqrt{2}$ since $\dim(X)^2$ divides $\dim(\mathcal{C})$ \cite[Proposition 8.14.6]{tcat} and $\dim(X)^2\in\mathbb{Z}$, and moreover $\dim(\mathcal{C}_\mathrm{pt})=2^2$, thus $\mathcal{O}(\mathcal{C}_\mathrm{ad})=2$.  All such braided fusion categories are classified \cite[Theorem 5.5]{MR4195420}.  In particular, $\mathcal{C}$ factors as the product of an Ising modular fusion category and a pointed modular fusion category of rank $2$.  The nondegenerately braided fusion categories of Frobenius-Perron dimension $2^4$ are described in \cite[Lemma 4.10]{MR4097909}.  In particular, any such category is pointed, or braided equivalent to a Deligne product of the form $\mathcal{I}\boxtimes\mathcal{D}$ where $\mathcal{D}$ is a nondegenerately braided pointed fusion category, or an Ising braided fusion category.  Likewise, the nondegenerately braided fusion categories of Frobenius-Perron dimension $2^5$ which are not integral are described in \cite[Corollary 4.13]{MR4097909}.  In particular, any such category is a Deligne product of the form $\mathcal{I}_1\boxtimes\mathcal{I}_2\boxtimes\mathcal{P}_1$ or $\mathcal{I}_1\boxtimes\mathcal{P}_2$ where $\mathcal{I}_1,\mathcal{I}_2$ are any Ising braided fusion categories and $\mathcal{P}_1,\mathcal{P}_2$ are pointed nondegenerately braided fusion category of Frobenius-Perron dimension $2$ and $2^3$, respectively.  It remains to consider those modular fusion categories which have Frobenius-Perron dimension $2^5$ and are integral, but not pointed.

\par Let $\mathcal{C}$ be a modular fusion category with $\mathrm{FPdim}(\mathcal{C})=2^5$ which is integral, but not pointed.  For each $X\in\mathcal{O}(\mathcal{C})$, we have $\mathrm{FPdim}(X)^2$ divides $\mathrm{FPdim}(\mathcal{C})$ \cite[Proposition 8.14.6]{tcat}, thus $\mathrm{FPdim}(X)\in\{1,2,2^2\}$.  Any modular fusion category $\mathcal{C}$ is graded by $G:=\widehat{\mathcal{O}(\mathcal{C}_\mathrm{pt})}$ \cite[Section 6.1]{nilgelaki}.  Under our current assumptions, $|\mathcal{O}(\mathcal{C}_\mathrm{pt})|=|G|\in\{2,2^2,2^3,2^4\}$ since $\mathcal{C}$ is not pointed, the grading must be nontrivial and the order of the grading divides $\mathrm{FPdim}(\mathcal{C})$ \cite[Theorem 3.5.2]{tcat}.  Moreover for any $g\in\mathcal{O}(\mathcal{C}_\mathrm{pt})$, $\mathrm{FPdim}(\mathcal{C}_g)=\mathrm{FPdim}(\mathcal{C})/|G|$.  Therefore $|G|=2^4$ would imply $\mathrm{FPdim}(\mathcal{C}_g)=2$ for all $g\in G$ and then $\mathcal{C}$ would be pointed in this case as $\mathrm{FPdim}(X)\in\{1,2,2^2\}$ for all $X\in\mathcal{O}(\mathcal{C})$.  Likewise, if $|G|=2$, there are no $a\in\mathbb{Z}_{\geq0}$ and $b\in\mathbb{Z}_{\geq1}$ such that $2^5=2+b\cdot2^2+c\cdot2^4$ since the left-hand side of this equation is divisible by $4$ and the right-hand side is not.  We conclude $|\mathcal{O}(\mathcal{C}_\mathrm{pt})|=|G|\in\{2^2,2^3\}$ and analyze each case in detail.

\par When $|G|\in\{2^2,2^3\}$, we have $\mathrm{FPdim}(\mathcal{C}_g)\in\{2^2,2^3\}$ for all $g\in G$ as well, thus $\mathrm{FPdim}(X)=2$ for all noninvertible $X\in\mathcal{O}(\mathcal{C})$.  In the case $|G|=2^2$, $\mathrm{FPdim}(\mathcal{C}_\mathrm{ad})=2^3$ and $\mathrm{FPdim}(C_\mathcal{C}(\mathcal{C}_\mathrm{ad}))=\mathrm{FPdim}(\mathcal{C}_\mathrm{pt})=2^2$.  Therefore $\mathcal{C}$ is a minimal modular extension of $\mathcal{C}_\mathrm{ad}$, a nonsymmetrically braided near-group fusion category.  There are exactly $8$ such modular fusion categories up to equivalence of this type, which can be realized as braided fusion subcategories of the twisted doubles $\mathcal{Z}(\mathrm{Vec}_{Q_8}^\omega)$ where $\omega\in H^3(Q_8,\mathbb{C}^\times)$ is any generator \cite[Section 3]{schopierayargentina}.  In the case $|G|=2^3$, $\mathrm{FPdim}(\mathcal{C}_\mathrm{ad})=2^2$ and thus $\mathcal{C}_\mathrm{ad}=C_\mathcal{C}(\mathcal{C}_\mathrm{pt})$ is pointed and symmetrically braided, and moreover possesses a Tannakian fusion subcategory of Frobenius-Perron dimension at least $2$ by \cite[Corollary 9.9.32]{tcat}; let $X\in\mathcal{O}(\mathcal{C}_\mathrm{pt})$ be an invertible object in this subcategory, which must have trivial twist \cite[Lemma 2.2.4]{schopieray2}.  Since $X\in\mathcal{C}_\mathrm{ad}$ and every $Y\in\mathcal{O}(\mathcal{C})$ with $\mathrm{FPdim}(Y)=2$ lies in its own graded component, $X\otimes Y\cong Y$ and the balancing equation \cite[Proposition 8.13.8]{tcat} states that $S_{X,Y}=2$.  Moreover $X$ lies in the symmetric center of $\mathcal{C}$ by \cite[Proposition 2.5]{mug1}, a contradiction.


\subsection{The odd primes}

\par We will prove (Theorem \ref{oddthm}) in this subsection that a modular fusion category $\mathcal{C}$ such that $\mathrm{FPdim}(\mathcal{C})=p^n$ for an odd prime $p\in\mathbb{Z}_{\geq3}$ and $n<6$ is pointed unless it exists as a modular subcategory of a twisted double of an extra-special $p$-group of order $p^3$.  As a result, $\mathcal{C}$ will be a minimal modular extension of a premodular fusion category with the fusion rules of $\mathrm{Rep}(G)$ for an extra-special $p$-group of order $p^3$ \cite[Section 6]{schopierayargentina}.

\begin{lemma}\label{prelem}
Let $\mathcal{C}$ be a modular fusion category with $\mathrm{FPdim}(\mathcal{C})=p^n$ for some odd prime $p>2$ and positive integer $n$.  If $\mathcal{C}$ is not pointed, there exists a fusion subcategory $\mathrm{Rep}(C_p^2)\subset(\mathcal{C}_\mathrm{ad})_\mathrm{pt}$ and $p^2\leq\mathrm{FPdim}(\mathcal{C}_\mathrm{pt})\leq p^{n-3}$.
\end{lemma}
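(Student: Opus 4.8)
The plan is to transfer the statement into the arithmetic of the metric group underlying $\mathcal{C}_\mathrm{pt}$. Since $\mathcal{C}$ is modular I would freely use that $C_\mathcal{C}(C_\mathcal{C}(\mathcal{D}))=\mathcal{D}$ and $\mathrm{FPdim}(\mathcal{D})\mathrm{FPdim}(C_\mathcal{C}(\mathcal{D}))=\mathrm{FPdim}(\mathcal{C})$ for each fusion subcategory $\mathcal{D}$, that $\mathcal{C}_\mathrm{ad}=C_\mathcal{C}(\mathcal{C}_\mathrm{pt})$, and that $\mathcal{C}$ is integral with each $\mathrm{FPdim}(X)$ a power of $p$ (by \cite[Proposition 8.14.6]{tcat}); in particular $\mathrm{FPdim}(\mathcal{C}_\mathrm{pt})\mathrm{FPdim}(\mathcal{C}_\mathrm{ad})=p^n$. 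Write $\mathcal{C}_\mathrm{pt}\simeq\mathcal{C}(A,q)$ for a quadratic form $q$ on a finite abelian $p$-group $A$, with associated bicharacter $b$ and radical $A_0:=\mathrm{rad}(b)\leq A$. First I would check that $(\mathcal{C}_\mathrm{ad})_\mathrm{pt}=\mathcal{C}_\mathrm{ad}\cap\mathcal{C}_\mathrm{pt}$ (every pointed subcategory of $\mathcal{C}_\mathrm{ad}$ is a pointed subcategory of $\mathcal{C}$, hence lies in $\mathcal{C}_\mathrm{pt}$), and that $\mathcal{C}_\mathrm{ad}\cap\mathcal{C}_\mathrm{pt}=C_{\mathcal{C}_\mathrm{pt}}(\mathcal{C}_\mathrm{pt})=\mathcal{C}(A_0,q|_{A_0})$. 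Because $p$ is odd, a quadratic form with trivial bicharacter takes values in $\{\pm1\}$ and is therefore trivial on the odd-order group $A_0$, so $(\mathcal{C}_\mathrm{ad})_\mathrm{pt}\simeq\mathrm{Rep}(\widehat{A_0})$ is Tannakian. (This is the only place oddness enters, and is precisely what fails in the even case.) The lemma thus reduces to showing that $A_0$ has $p$-rank at least $2$ and that $p^2\leq|A|\leq p^{n-3}$.

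The pivot, and the step I expect to carry the weight, is that $\bigcap_{Y}\mathrm{Stab}(Y)$ is trivial, the intersection running over all non-invertible simple $Y$ of $\mathcal{C}$, where $\mathrm{Stab}(Y):=\{g\in\mathcal{O}(\mathcal{C}_\mathrm{pt}):g\otimes Y\cong Y\}$. If $g$ occurs as a summand of $Y\otimes Y^*$ then $g\in\mathcal{C}_\mathrm{ad}$, so, being invertible, $g\in(\mathcal{C}_\mathrm{ad})_\mathrm{pt}$; hence every $\mathrm{Stab}(Y)$ is a subgroup of $A_0$ and its elements have $\theta_g=1$ since $(\mathcal{C}_\mathrm{ad})_\mathrm{pt}$ is Tannakian. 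If some $g$ lies in $\mathrm{Stab}(Y)$ for \emph{every} non-invertible simple $Y$, then applying the balancing axiom $\theta_{g\otimes Y}=\theta_g\theta_Y\,(c_{Y,g}c_{g,Y})$ \cite[Proposition 8.13.8]{tcat} to $g\otimes Y\cong Y$ forces the double braiding of $g$ with each such $Y$ to be the identity, so $g$ centralizes every non-invertible simple; as $g\in(\mathcal{C}_\mathrm{ad})_\mathrm{pt}=C_{\mathcal{C}_\mathrm{pt}}(\mathcal{C}_\mathrm{pt})$ it also centralizes every invertible object, hence all of $\mathcal{C}$, so $g\in C_\mathcal{C}(\mathcal{C})=\mathrm{Vec}$ and $g=\mathbbm{1}$.

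Next I would run a mod-$p$ count. From $\mathrm{FPdim}(Y)^2=\mathrm{FPdim}(Y\otimes Y^*)=|\mathrm{Stab}(Y)|+\sum(\text{non-invertible summands})$, using that every non-invertible simple has $\mathrm{FPdim}$ divisible by $p$ while $\mathrm{FPdim}(Y)^2$ is a positive power of $p$, we obtain $p\mid|\mathrm{Stab}(Y)|$, hence $|\mathrm{Stab}(Y)|\geq p$ for every non-invertible simple $Y$; and $\mathcal{C}$ does have such a $Y$ since it is not pointed. If $A_0$ were cyclic then $\mathrm{Stab}(Y)$ is a nontrivial subgroup of a cyclic $p$-group and so contains the unique order-$p$ subgroup of $A_0$; by the same reasoning this subgroup lies in \emph{every} $\mathrm{Stab}(Y)$, hence in $\bigcap_Y\mathrm{Stab}(Y)$, contradicting the previous paragraph. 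So $A_0$ has $p$-rank $\geq2$, whence $\mathrm{Rep}(C_p^2)\hookrightarrow\mathrm{Rep}(\widehat{A_0})=(\mathcal{C}_\mathrm{ad})_\mathrm{pt}$ and also $\mathrm{FPdim}(\mathcal{C}_\mathrm{pt})=|A|\geq|A_0|\geq p^2$.

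For the upper bound it suffices to prove $\mathrm{FPdim}(\mathcal{C}_\mathrm{ad})\geq p^3$. We have $\mathcal{C}_\mathrm{ad}\supseteq(\mathcal{C}_\mathrm{ad})_\mathrm{pt}\supseteq\mathrm{Rep}(C_p^2)$, so $\mathrm{FPdim}(\mathcal{C}_\mathrm{ad})\geq p^2$, and equality would force $\mathcal{C}_\mathrm{ad}=(\mathcal{C}_\mathrm{ad})_\mathrm{pt}=\mathrm{Rep}(C_p^2)$ with $A_0=C_p^2$; in particular $\mathcal{C}_\mathrm{ad}$ would be pointed, so $Y\otimes Y^*$ is a sum of invertibles for every non-invertible simple $Y$, whence $\mathrm{FPdim}(Y)^2=|\mathrm{Stab}(Y)|\leq|A_0|=p^2$ and therefore (as $\mathrm{FPdim}(Y)$ is a power of $p$ exceeding $1$) $\mathrm{Stab}(Y)=A_0$ for every such $Y$ — again contradicting triviality of $\bigcap_Y\mathrm{Stab}(Y)$. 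Hence $\mathrm{FPdim}(\mathcal{C}_\mathrm{ad})\geq p^3$ and $\mathrm{FPdim}(\mathcal{C}_\mathrm{pt})=p^n/\mathrm{FPdim}(\mathcal{C}_\mathrm{ad})\leq p^{n-3}$. Apart from this final squeeze and the pivotal centralizing argument of the second paragraph, the proof is the standard dictionary between pointed braided fusion categories and metric groups together with Müger's centralizer theory.
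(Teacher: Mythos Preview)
Your proof is correct and uses the same core ingredients as the paper's: the identification $(\mathcal{C}_\mathrm{ad})_\mathrm{pt}=C_{\mathcal{C}_\mathrm{pt}}(\mathcal{C}_\mathrm{pt})$ as Tannakian (via oddness of $p$), the mod-$p$ count on $Y\otimes Y^*$ forcing $|\mathrm{Stab}(Y)|\geq p$ for every noninvertible simple $Y$, and the balancing equation to show that any common stabilizer lands in the symmetric center and hence contradicts modularity. Your organization is slightly cleaner than the paper's in that you isolate the triviality of $\bigcap_Y\mathrm{Stab}(Y)$ as a single pivot and then deduce both the non-cyclicity of $A_0$ and the bound $\mathrm{FPdim}(\mathcal{C}_\mathrm{ad})\geq p^3$ uniformly from it, whereas the paper runs essentially the same centralizing contradiction separately for the cases $\dim((\mathcal{C}_\mathrm{ad})_\mathrm{pt})=p$, the cyclic case, and the two upper-bound cases $\dim(\mathcal{C}_\mathrm{pt})\in\{p^{n-1},p^{n-2}\}$.
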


\begin{proof}
If $C_{\mathcal{C}_\mathrm{pt}}(\mathcal{C}_\mathrm{pt})=C_\mathcal{C}(\mathcal{C}_\mathrm{pt})\cap\mathcal{C}_\mathrm{pt}=(\mathcal{C}_\mathrm{ad})_\mathrm{pt}$ is trivial, then $\mathcal{C}_\mathrm{pt}$ is modular which can only occur if $\mathcal{C}=\mathcal{C}_\mathrm{pt}$; otherwise $\mathcal{C}_\mathrm{ad}$ is a nontrivial modular fusion category of prime power Frobenius-Perron dimension with no invertible objects, i.e.\ no nontrivial gradings, violating \cite[Theorem 8.28]{ENO}.  In any case, the symmetrically braided category $(\mathcal{C}_\mathrm{ad})_\mathrm{pt}$ is Tannakian as $p$ is odd \cite[Corollary 9.9.32(i)]{tcat}.  Now assume $\dim((\mathcal{C}_\mathrm{ad})_\mathrm{pt})=p$.  Then the stabilizer subgroup of $(\mathcal{C}_\mathrm{ad})_\mathrm{pt}$ acting on any noninvertible $X\in\mathcal{O}(\mathcal{C})$ has rank $1$ or $p$.  But in the former case $\dim(X\otimes X^\ast)\equiv1\pmod{p}$ as all noninvertible simple objects are multiplies of $p$, which is false.  Thus $(\mathcal{C}_\mathrm{ad})_\mathrm{pt}$ stabilizes all noninvertible simple objects of $\mathcal{C}$.  As a result, $(\mathcal{C}_\mathrm{ad})_\mathrm{pt}\subset C_\mathcal{C}(\mathcal{C})$ by the balancing equation \cite[Proposition 8.13.8]{tcat}, contradicting $\mathcal{C}$ being modular.  This bounds $p^2\leq\dim(\mathcal{C}_\mathrm{pt})$.  For the upper bound, recall that $\mathcal{C}$ is graded by $\mathcal{C}_\mathrm{pt}$ and all graded components have dimension $\dim(\mathcal{C})/\dim(\mathcal{C}_\mathrm{pt})$.  If $\dim(\mathcal{C}_\mathrm{pt})=p^{n-1}$, all graded components would have dimension $p$ which cannot occur since $\dim(Y)^2\geq p^2$ for all noninvertible $Y\in\mathcal{O}(\mathcal{C})$.   If $\dim(\mathcal{C}_\mathrm{pt})=p^{n-2}$,  $\dim(\mathcal{C}_\mathrm{ad})=p^2$ and thus $\mathcal{C}_\mathrm{ad}$ is Tannakian.  Any other graded component of $\mathcal{C}$ has exactly one simple object of dimension $p$, or $p$ invertible objects.  If $X\in\mathcal{O}(\mathcal{C})$ is noninvertible, which exists by assumption, then $X$ is a fixed-point of $\mathcal{O}(\mathcal{C}_\mathrm{ad})$; therefore by the balancing equation, any noninvertible $X\in\mathcal{O}(\mathcal{C})$ centralizes $\mathcal{C}_\mathrm{pt}$, i.e. $X\in\mathcal{C}_\mathrm{ad}$, a contradiction.

\par It remains to show $(\mathcal{C}_\mathrm{ad})_\mathrm{pt}$ is not cyclic.  To this end, assume there exists a unique fusion subcategory $\mathcal{E}\subset(\mathcal{C}_\mathrm{ad})_\mathrm{pt}$ of dimension $p$.  Note that if $X\in\mathcal{O}(\mathcal{C})$ is noninvertible, then $X$ is a fixed-point of $\mathcal{O}(\mathcal{E})$.  Indeed, if it is not, then $\mathcal{O}(\mathcal{C})$ acts transitively on $X$, hence $\dim(X\otimes X^\ast)\cong1\pmod{p}$ which cannot occur.  Therefore $C_\mathcal{C}(\mathcal{E})$ is centralized by all invertible and noninvertible simple objects by the balancing equation \cite[Proposition 8.13.8]{tcat}, i.e. $C_\mathcal{C}(\mathcal{E})\subset C_\mathcal{C}(\mathcal{C})$ is nontrivial.
\end{proof}

We remind the reader of the definition of extra-special $p$-groups of order $p^3$ as they will be briefly needed in the proof of future results.  Denote the Heisenberg group of order $p^3$ by $H_p$ for odd primes $p$.  This is the extraspecial $p$-group traditionally denoted $p_+^{1+2}$, as well as the matrix group with elements
\begin{equation}
\left\{\left[
\begin{array}{ccc}
1 & a & b \\
0 & 1 & c \\
0 & 0 & 1
\end{array}
\right]:a,b,c\in\mathbb{Z}/p\mathbb{Z}\right\}.
\end{equation}
The exponent of $H_p$ is $p$.  Denote the other extraspecial $p$-group of order $p^3$, traditionally denoted $3_-^{1+2}$, by $G_p$.  The group $G_p$ has exponent $p^2$ and presentation
\begin{equation}
G_p=\langle a,b:a
^p =b^{p^2}=1,aba^{-1}=b^{p+1}\rangle.
\end{equation}

\begin{proposition}\label{oddthm}
Let $\mathcal{C}$ be a modular fusion category with $\mathrm{FPdim}(\mathcal{C})=p^n$ for some odd prime $p>2$ and $n<6$.  Then $\mathcal{C}$ is pointed or $n=5$ and $\mathrm{FSexp}(\mathcal{C})=p^2$.  In the latter case, $\mathcal{C}$ is a modular subcategory of $\mathcal{Z}(\mathrm{Vec}_G^\omega)$ where $G$ is an extra-special $p$-group of order $p^3$ and $\omega\in H^3(G,\mathbb{C}^\times)$ has order $p$.
\end{proposition}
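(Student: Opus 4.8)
\emph{Proof plan.} If $\mathcal{C}$ is pointed there is nothing to prove, so assume it is not. The first step is simply to feed $\mathcal{C}$ into Lemma~\ref{prelem}: it produces a Tannakian $\mathrm{Rep}(C_p^2)\subseteq(\mathcal{C}_\mathrm{ad})_\mathrm{pt}$ together with the bound $p^2\leq\mathrm{FPdim}(\mathcal{C}_\mathrm{pt})\leq p^{n-3}$. Since $n<6$, these inequalities force $n=5$, and then $\mathrm{FPdim}(\mathcal{C}_\mathrm{pt})=\mathrm{FPdim}((\mathcal{C}_\mathrm{ad})_\mathrm{pt})=p^2$, so in fact $\mathcal{C}_\mathrm{pt}=(\mathcal{C}_\mathrm{ad})_\mathrm{pt}=\mathrm{Rep}(C_p^2)$. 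In particular $\mathcal{C}_\mathrm{pt}$ is Tannakian and contained in $\mathcal{C}_\mathrm{ad}$, whence $\mathcal{C}_\mathrm{ad}=C_\mathcal{C}(\mathcal{C}_\mathrm{pt})$ has $\mathrm{FPdim}(\mathcal{C}_\mathrm{ad})=p^5/p^2=p^3$ and symmetric center $\mathcal{C}_\mathrm{pt}$.

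Next I would determine the fusion rules of $\mathcal{C}_\mathrm{ad}$. For a simple $X\in\mathcal{O}(\mathcal{C}_\mathrm{ad})$ one has $\dim(X)^2\mid p^5$ by \cite[Proposition 8.14.6]{tcat} and $\dim(X)^2\leq\mathrm{FPdim}(\mathcal{C}_\mathrm{ad})=p^3$, so $\dim(X)\in\{1,p\}$; since the invertible objects of $\mathcal{C}_\mathrm{ad}$ are exactly $\mathcal{O}(\mathcal{C}_\mathrm{pt})$, a dimension count gives precisely $p^2$ invertibles and $p-1$ noninvertible simples, each of dimension $p$. Now analyze the universal grading of $\mathcal{C}_\mathrm{ad}$: it is nontrivial by \cite[Theorem 8.28]{ENO}, so its order lies in $\{p,p^2,p^3\}$; order $p^3$ would make $\mathcal{C}_\mathrm{ad}$ pointed, and order $p^2$ would force $p^2$ dimension-$p$ simples, both impossible, so the universal grading group is $C_p$. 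A short bookkeeping of the $p$ graded components (each of dimension $p^2$) then shows the trivial component is $\mathcal{C}_\mathrm{pt}$ and each of the $p-1$ nontrivial components contains exactly one dimension-$p$ simple. From this grading the fusion rules follow at once: every invertible fixes every dimension-$p$ simple (the tensor product stays in the same component, which has a unique dimension-$p$ simple), $X\otimes X^\ast\cong\bigoplus_{g\in\mathcal{O}(\mathcal{C}_\mathrm{pt})}g$, and $X\otimes Y\cong p\,Z$ when $X,Y,Z$ are the dimension-$p$ simples in degrees $i,j,i+j$ with $i+j\neq0$. These are exactly the fusion rules of $\mathrm{Rep}(G)$ for an extraspecial $p$-group $G$ of order $p^3$.

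With this identification, $\mathcal{C}$ is a minimal modular extension of the premodular category $\mathcal{C}_\mathrm{ad}$, since $\mathrm{FPdim}(\mathcal{C})=p^5=p^3\cdot p^2=\mathrm{FPdim}(\mathcal{C}_\mathrm{ad})\cdot\mathrm{FPdim}(\mathcal{C}_\mathrm{pt})$ and $\mathcal{C}_\mathrm{pt}$ is the symmetric center of $\mathcal{C}_\mathrm{ad}$. I would then invoke the classification of minimal modular extensions of premodular categories with $\mathrm{Rep}(G)$-fusion rules, $G$ extraspecial of order $p^3$, carried out in \cite[Section 6]{schopierayargentina}: it realizes each such $\mathcal{C}$ as a modular subcategory of some $\mathcal{Z}(\mathrm{Vec}_G^\omega)$ with $G$ extraspecial of order $p^3$ and $\omega\in H^3(G,\mathbb{C}^\times)$ of order $p$, and from the explicit modular data of these finitely many categories one reads $\mathrm{FSexp}(\mathcal{C})=p^2$; this is consistent with $\mathrm{FSexp}(\mathrm{Vec}_G^\omega)=p^2$ for $\omega$ of order $p$ as computed in Example~\ref{ex:group}, while the coarse fact that $\mathrm{FSexp}(\mathcal{C})$ is a power of $p$ that is at least $p$ already follows from \cite[Theorem 3.9]{MR3486174} (and $\mathcal{C}$ not being $\mathrm{Vec}$).

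The main obstacle is the middle step: extracting the precise $\mathrm{Rep}(G)$ fusion rules of $\mathcal{C}_\mathrm{ad}$ requires eliminating every degenerate grading pattern, and one must then verify carefully that $\mathcal{C}$ meets the hypotheses of the minimal-modular-extension classification (a premodular base with the right fusion rules and matching global dimensions). By comparison the reduction to $n=5$ via Lemma~\ref{prelem} and the concluding appeal to \cite{schopierayargentina} are routine, though pinning $\mathrm{FSexp}(\mathcal{C})$ to exactly $p^2$ — rather than to an a priori larger power of $p$ — is a secondary point that leans on the explicit classification and on Lemma~\ref{theefirstlemma}.
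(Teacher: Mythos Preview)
Your reduction to $n=5$ via Lemma~\ref{prelem} and your determination of the fusion rules of $\mathcal{C}_\mathrm{ad}$ track the paper's argument closely (one quibble: your dismissal of a universal grading of order $p^2$ is misstated --- such a grading would give components of Frobenius--Perron dimension $p$, which cannot contain a dimension-$p$ simple, forcing $\mathcal{C}_\mathrm{ad}$ pointed; the conclusion still stands). The genuine divergence is in how $\mathrm{FSexp}(\mathcal{C})=p^2$ is established. You defer this entirely to the explicit minimal-modular-extension classification in \cite{schopierayargentina}, asserting it already produces $\omega$ of order $p$. The paper does not assume this: it first bounds $\mathrm{FSexp}(\mathcal{C})\leq p^2$ by a direct Galois-orbit count --- for $X\in\mathcal{O}(\mathcal{C})\setminus\mathcal{O}(\mathcal{C}_\mathrm{ad})$ with $\mathrm{ord}(\theta_X)=p^m$, one has $X\not\cong X^\ast$, a rank-$p$ Tannakian $\mathcal{F}\subset\mathcal{C}_\mathrm{pt}$ fixes $X$, and Galois-closure of $\mathcal{O}(\mathcal{C})\setminus\mathcal{O}(\mathcal{F})$ yields $p^5-p\geq 2\cdot p^2\cdot\tfrac{1}{2}p^{m-1}(p-1)$, whence $m\leq 2$. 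Only then is \cite[Proposition 6.0.2]{schopierayargentina} invoked to write $\mathcal{C}\boxtimes\mathcal{P}\simeq\mathcal{Z}(\mathrm{Vec}_G^\omega)$ with $\omega$ a priori arbitrary, and $\mathrm{FSexp}=p$ is ruled out by hand: it forces $G=H_p$ and $\omega$ trivial, but every invertible of the untwisted $\mathcal{Z}(\mathrm{Vec}_{H_p})$ has trivial twist, so no pointed modular factor $\mathcal{P}$ can be split off. Your route is shorter provided the reference really delivers $\omega$ of order $p$; the paper's route is self-contained on the exponent bound and on excluding the untwisted case.
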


\begin{proof}
\par Lemma \ref{prelem} and the assumption $n<6$ imply $\dim(\mathcal{C})=p^5$ if $\mathcal{C}$ is not pointed; the proof has two parts.  First we show that $\mathcal{C}_\mathrm{ad}$ has the fusion rules of the character ring of an extraspecial $p$-group of order $p^3$.  The second part is to show that $\mathrm{FSexp}(\mathcal{C})=\mathrm{ord}(T)=p^2$ for any such modular fusion category.  The result will then follow from previous work of the third author on minimal modular extensions \cite{schopierayargentina}.

\par By Lemma \ref{prelem}, $\dim(\mathcal{C}_\mathrm{pt})=p^2$ and thus $\mathcal{C}_\mathrm{pt}=(\mathcal{C}_\mathrm{ad})_\mathrm{pt}\simeq\mathrm{Rep}(C_p^2)$.  Each graded component has dimension $p^3$, hence any noninvertible simple object has dimension $p$.  There are exactly
\begin{equation}
\dfrac{1}{p^2}(\dim(\mathcal{C}_\mathrm{ad})-\dim(\mathcal{C}_\mathrm{pt}))=\dfrac{p^3-p^2}{p^2}=p-1
\end{equation}
noninvertible simple objects in $\mathcal{O}(\mathcal{C}_\mathrm{ad})$ and $\mathcal{O}(\mathcal{C}_\mathrm{pt})$ acts trivially on them by the orbit-stabilizer theorem.  Moreover $X\otimes X^\ast\cong\bigoplus_{Y\in\mathcal{O}(\mathcal{C}_\mathrm{pt})}Y$ and thus $(\mathcal{C}_\mathrm{ad})_\mathrm{ad}=\mathcal{C}_\mathrm{pt}$, i.e.\ $\mathcal{C}_\mathrm{ad}$ is $C_p$-graded with each graded component having a unique noninvertible simple object in it. This proves that $\mathcal{C}_\mathrm{ad}$ has the fusion rules of the character ring of an extraspecial $p$-group of order $p^3$.

\par  Now let $X\in\mathcal{O}(\mathcal{C})\setminus\mathcal{O}(\mathcal{C}_\mathrm{ad})$ with $\theta_X$ a root of unity of order $p^m$ for some $m\in\mathbb{Z}_{\geq1}$.  Note that $X\not\cong X^\ast$ since $p$ is odd.  Recall that the stabilizer of $\mathcal{O}(\mathcal{C}_\mathrm{pt})$ acting on $X$ has fixed-points since the graded component containing $X$ has $p$ simple objects, so there exists $\mathcal{F}\subset\mathcal{C}_\mathrm{pt}$ of dimension $p$ such that $X\in\mathcal{O}(C_\mathcal{C}(\mathcal{F}))$ by the balancing equation \cite[Proposition 8.13.10]{tcat} and $\mathcal{O}(\mathcal{C})\setminus\mathcal{O}(\mathcal{F})$ is closed under Galois conjugacy since $\mathcal{O}(\mathcal{F})$ is by \cite[Theorem 4.1.6]{plavnik2021modular}.  Thus
\begin{equation}
p^4(p-1)=\dim(\mathcal{C})-\dim(\mathcal{F})\geq2\sum_{Y\in\mathcal{O}_X^t}\dim(Y)^2=p^2\cdot p^{m-1}(p-1)
\end{equation}
and therefore $m\leq3$.  But $m=3$ implies $\dim(\mathcal{C})=\dim(\mathcal{F})+\dim(\mathcal{O}_X)$ which cannot occur since $\dim(\mathcal{C}_\mathrm{pt})\geq p^2$.  We conclude that $\mathrm{FSexp}(\mathcal{C})=\mathrm{ord}(T)\in\{p,p^2\}$.

\par It was proven in \cite[Proposition 6.0.2]{schopierayargentina} that there exists a pointed modular fusion category $\mathcal{P}$ of dimension $p$, and hence $\mathrm{FSexp}(\mathcal{P})=p$, such that $\mathcal{C}\boxtimes\mathcal{P}$ is braided equivalent to the twisted double $\mathcal{Z}(\mathrm{Vec}_G^\omega)$ for an extraspecial $p$-group $G$ of order $p^3$ and $\omega\in H^3(G,\mathbb{C}^\times)$.  Recall \cite[Theorem 9.2]{MR2313527} that the Frobenius-Schur exponent of any group-theoretical fusion category is explicitly known (Example \ref{ex:group}).  Assume $\mathrm{FSexp}(\mathcal{C})=\mathrm{FSexp}(\mathcal{Z}(\mathrm{Vec}_G^\omega))=p$, which forces $G$ to have exponent $p$, i.e.\ $G\cong H_p$.  Moreover, $\omega$ must restrict trivially to all cyclic subgroups $H\subset G$ which implies $\omega$ is trivial.  Now recall \cite[Section 2.2]{MR1770077} that the set of invertible objects of the untwisted double $\mathcal{Z}(\mathrm{Vec}_G)$ is the collection of pairs $(g,\chi)$ where $g\in Z(G)$ and $\chi$ is a $1$-dimensional representation of $G$ with twist $\theta_{(g,\rho)}=\chi(g)$.  But for extraspecial $p$-groups $G$, $\chi(g)=1$ for all such $g$ and $\chi$.  Therefore $\mathcal{Z}(\mathrm{Vec}_G)$ has no nontrivial pointed modular fusion subcategories, so this case produces no examples.  Moreover $\mathrm{FSexp}(\mathcal{C})=p^2$.
\end{proof}




\section{Prime power Frobenius-Schur exponent}\label{sec:sec}

In this section we assume that $\mathcal{C}$ is a modular fusion category and $\mathrm{FSexp}(\mathcal{C})=p^n$ is a prime power for some prime $p$ and $n\in\mathbb{Z}_{\geq1}$.  Recall the normalization of the modular data from Section \ref{sub:1}.  With our assumptions, the entries of the $S$- and $T$-matrices are contained in $\mathbb{Q}(\zeta_{p^n})$ while $\mathrm{ord}(t_{X,X})$ divides $12p^n$ for all $X\in\mathcal{O}(\mathcal{C})$.  The following lemma is key to most of the remaining arguments.

\begin{lemma}\label{theelemma}
Let $\mathcal{C}$ be a modular fusion category.  If $\mathrm{FSexp}(\mathcal{C})=p^n$ for some prime $p$ and $n\in\mathbb{Z}_{\geq1}$, then there exists $X\in\mathcal{O}(\mathcal{C})$ such that $\mathrm{FSexp}(\mathcal{C})$ divides $\mathrm{ord}(t_{X,X})$.
\end{lemma}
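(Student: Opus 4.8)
The plan is to reduce the statement to a comparison between the twist of one well-chosen simple object and the twist of the monoidal unit $\mathbbm{1}$. First I would record that $\mathrm{FSexp}(\mathcal{C})=\mathrm{ord}(T)$, and since $T$ is the diagonal matrix with entries $T_{X,X}=\theta_X^{-1}$, we have $\mathrm{ord}(T)=\mathrm{lcm}\{\mathrm{ord}(\theta_X):X\in\mathcal{O}(\mathcal{C})\}$. Each $\mathrm{ord}(\theta_X)$ divides $p^n$, hence is a power of $p$, and a least common multiple of powers of a fixed prime $p$ equals the largest one; as this lcm equals $p^n$, it is attained, so there is some $X_0\in\mathcal{O}(\mathcal{C})$ with $\mathrm{ord}(\theta_{X_0})=p^n$. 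This $X_0$ is the natural first candidate.

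Next I would exploit the normalization recalled in Section~\ref{sub:1}: $t_{X,X}=T_{X,X}\gamma^{-1}=\theta_X^{-1}\gamma^{-1}$, where the cube root $\gamma$ of $\tau_1^+(\mathcal{C})/\sqrt{\dim(\mathcal{C})}$ is a \emph{single} scalar not depending on $X$. Evaluating at the unit, where $\theta_{\mathbbm{1}}=1$, gives $t_{\mathbbm{1},\mathbbm{1}}=\gamma^{-1}$, so $\gamma$ cancels out of the quotient and
\begin{equation*}
\theta_{X_0}^{-1}=t_{X_0,X_0}\,t_{\mathbbm{1},\mathbbm{1}}^{-1}.
\end{equation*}
Since $\mathrm{ord}(t)$ is finite, each $t_{X,X}$ — in particular $t_{X_0,X_0}$ and $t_{\mathbbm{1},\mathbbm{1}}$ — is a root of unity, so the right-hand side is a product of two roots of unity.

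Finally, for roots of unity $\alpha,\beta$ one has $\mathrm{ord}(\alpha\beta)\mid\mathrm{lcm}(\mathrm{ord}(\alpha),\mathrm{ord}(\beta))$ and $\mathrm{ord}(\beta^{-1})=\mathrm{ord}(\beta)$, whence
\begin{equation*}
p^n=\mathrm{ord}(\theta_{X_0})=\mathrm{ord}(\theta_{X_0}^{-1})\ \big|\ \mathrm{lcm}\!\left(\mathrm{ord}(t_{X_0,X_0}),\,\mathrm{ord}(t_{\mathbbm{1},\mathbbm{1}})\right).
\end{equation*}
A prime power dividing the least common multiple of two integers must divide one of them, so either $p^n\mid\mathrm{ord}(t_{X_0,X_0})$ or $p^n\mid\mathrm{ord}(t_{\mathbbm{1},\mathbbm{1}})$; taking $X:=X_0$ in the first case and $X:=\mathbbm{1}$ in the second proves the lemma.

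There is no serious obstacle here, but the point worth anticipating is the last dichotomy: one cannot in general force $X=X_0$, since the anomaly $\gamma$ could itself carry a full factor $p^n$ in the order of $t_{\mathbbm{1},\mathbbm{1}}$ and partially cancel the $p$-part of $\theta_{X_0}^{-1}$ inside $t_{X_0,X_0}$, so keeping $\mathbbm{1}$ as a fallback object is essential. Deciding which of the two objects actually works would require controlling the $p$-part of the order of $\tau_1^+(\mathcal{C})/\sqrt{\dim(\mathcal{C})}$, which is not needed for the present (weaker) conclusion.
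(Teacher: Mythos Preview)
Your proof is correct and follows essentially the same approach as the paper: choose $X_0$ with $\mathrm{ord}(\theta_{X_0})=p^n$, use $t_{\mathbbm{1},\mathbbm{1}}=\gamma^{-1}$ to write $\theta_{X_0}^{-1}=t_{X_0,X_0}\,t_{\mathbbm{1},\mathbbm{1}}^{-1}$, and conclude that $p^n$ divides one of the two orders. The paper phrases the final step as a case split on the $p$-part of $\mathrm{ord}(\gamma)$ rather than via your $\mathrm{lcm}$ divisibility, but the content is identical.
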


\begin{proof}
It is clear there exists $X\in\mathcal{O}(\mathcal{C})$ such that $\mathrm{ord}(\theta_X)=\mathrm{FSexp}(\mathcal{C})$.  If $\mathrm{ord}(\gamma)=kp^m$ for some $k$ dividing $12$ and $m<n$, then $p^n$ divides $\mathrm{ord}(t_{X,X})=\mathrm{ord}(\theta_x/\gamma)$.  Lastly, if $\mathrm{ord}(\gamma)=kp^n$, then $\mathrm{ord}(t_{\mathbbm{1},\mathbbm{1}})=\mathrm{ord}(\gamma)=kp^n$.
\end{proof}

The rest of the section is divided into two subsections based on whether $p>2$ or $p=2$, as the $p=2$ case holds the exceptional examples and thus requires more finessed arguments.


\subsection{The odd primes}\label{sub:greater}

Recall the notation and concepts from Section \ref{sec:galois} as they will be used prolifically throughout the remainder of the exposition.

\begin{lemma}\label{lem:1}
Let $\mathcal{C}$ be a modular fusion category, $p>2$ an odd prime, and $n\in\mathbb{Z}_{\geq1}$.  If $\mathrm{Ndim}(\mathcal{C})=\dim(\mathcal{C})=p^n$, then either $\mathrm{FSexp}(\mathcal{C})<\mathrm{Ndim}(\mathcal{C})$, or $\mathrm{FSexp}(\mathcal{C})=\mathrm{Ndim}(\mathcal{C})$ and there exists $X\in\mathcal{O}(\mathcal{C})$ with $\mathrm{FSexp}(\mathcal{C})$ dividing $\mathrm{ord}(t_{X,X})$ and
\begin{equation}
\mathcal{M}(\dim(X))<\left\{\begin{array}{ccc}14/5 & \mathrm{if} & p>3\\ 3 & \mathrm{if} & p=3\end{array}\right..
\end{equation}
\end{lemma}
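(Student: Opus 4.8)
The plan is to leverage Lemma \ref{theelemma}, which guarantees a simple object $X \in \mathcal{O}(\mathcal{C})$ with $\mathrm{FSexp}(\mathcal{C}) = p^n$ dividing $\mathrm{ord}(t_{X,X})$, together with Lemma \ref{theefirstlemma}, which (since $\dim(\mathcal{C}) = p^n \in \mathbb{Z}$) gives the inequality $\dim(\mathcal{O}_X^t) \geq [\mathbb{Q}(\zeta)^+ : \mathbb{Q}]\, \mathcal{M}(\dim(X))$ where $\zeta := t_{X,X}$. The idea is that the Galois orbit of $X$ under the $t$-matrix action must fit inside $\mathcal{C}$, so $\dim(\mathcal{O}_X^t) \leq \dim(\mathcal{C}) = p^n$, and this caps $\mathcal{M}(\dim(X))$ once we have a good lower bound on $[\mathbb{Q}(\zeta)^+ : \mathbb{Q}]$. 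First I would argue that since $p^n \mid \mathrm{ord}(\zeta)$ and $\mathrm{ord}(\zeta) \mid 12 p^n$, the $p$-part of $\mathrm{ord}(\zeta)$ is exactly $p^n$ (it cannot exceed $p^n$ because $\mathrm{ord}(\zeta) \mid 12\,\mathrm{FSexp}(\mathcal{C}) = 12 p^n$ and $p$ is odd, so the $p$-part of $12 p^n$ is $p^n$). Hence $\mathbb{Q}(\zeta) \supseteq \mathbb{Q}(\zeta_{p^n})$, so $[\mathbb{Q}(\zeta):\mathbb{Q}]$ is divisible by $\varphi(p^n) = p^{n-1}(p-1)$, and therefore $[\mathbb{Q}(\zeta)^+ : \mathbb{Q}]$ is divisible by $p^{n-1}(p-1)/2$ (for $p$ odd, $n \geq 1$, with the convention that this is $(p-1)/2$ when $n=1$).

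Next I would combine these: if $\mathrm{FSexp}(\mathcal{C}) = \mathrm{Ndim}(\mathcal{C}) = p^n$, then
\begin{equation}
p^n = \dim(\mathcal{C}) \geq \dim(\mathcal{O}_X^t) \geq [\mathbb{Q}(\zeta)^+ : \mathbb{Q}]\, \mathcal{M}(\dim(X)) \geq \frac{p^{n-1}(p-1)}{2}\, \mathcal{M}(\dim(X)),
\end{equation}
which rearranges to $\mathcal{M}(\dim(X)) \leq \frac{2p}{p-1}$. For $p > 3$ this gives $\mathcal{M}(\dim(X)) \leq 2p/(p-1) \leq 2 \cdot 5/4 = 5/2 < 14/5$, and for $p = 3$ it gives $\mathcal{M}(\dim(X)) \leq 3$, which is not quite strict. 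To upgrade $p=3$ to a strict inequality I would observe that equality $\mathcal{M}(\dim(X)) = 3$ would force $\dim(\mathcal{O}_X^t) = \dim(\mathcal{C})$ exactly, meaning $\mathcal{O}_X^t$ exhausts all of $\mathcal{O}(\mathcal{C})$ and in particular contains the unit object $\mathbbm{1}$; but $\dim(\mathbbm{1})^2 = 1$ while $\mathcal{M}$ being the normalized trace of $\dim(X)^2$ over a conjugate of $X$ is incompatible with $X$ being Galois-conjugate to $\mathbbm{1}$ (which would force $\mathcal{M}(\dim(X)) = 1$). Alternatively, for the $p>3$ case one can be cruder still — the bound $5/2 < 14/5$ is already comfortable — and for $p=3$ invoke that $\mathbbm{1} \in \mathcal{O}_X^t$ would make $t_{X,X}$ conjugate to $t_{\mathbbm{1},\mathbbm{1}} = \gamma^{-1}$, contradicting that $\mathrm{ord}(t_{X,X})$ has $p$-part $p^n$ unless $\mathrm{ord}(\gamma)$ does too, a possibility which one checks separately leads to $\mathcal{M}(\dim(X)) = 1 < 3$ anyway.

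The main obstacle I anticipate is the borderline $p = 3$ case, where the naive counting gives exactly the non-strict bound $\mathcal{M}(\dim(X)) \leq 3$: squeezing out strictness requires carefully ruling out the equality scenario $\dim(\mathcal{O}_X^t) = \dim(\mathcal{C})$, i.e.\ that the $t$-orbit of $X$ accounts for the entire categorical dimension with no room left over for $\mathbbm{1}$ (which has dimension $1$, hence small $\mathcal{M}$). A secondary subtlety is the base case $n=1$ where $\varphi(p) = p-1$ and $[\mathbb{Q}(\zeta)^+:\mathbb{Q}]$ could be as small as $(p-1)/2$ — but here $\mathrm{Ndim}(\mathcal{C}) = p$ forces $\mathcal{C}$ pointed (by the cited \cite[Corollary 8.30]{ENO}), so $\mathrm{FSexp}(\mathcal{C}) = \mathrm{Ndim}(\mathcal{C})$ cannot hold unless $\mathcal{C}$ has a single graded component's worth of structure, which one handles directly; in fact for pointed $\mathcal{C}$ of dimension $p$ one has $\mathrm{FSexp}(\mathcal{C}) \mid p$ and the extremal cyclic case is precisely where the $\mathcal{M}(\dim(X)) = 1 < 3$ bound holds trivially since all objects are invertible. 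I would organize the writeup so that the pointed/small cases are dispatched first, then the main Galois-counting inequality carries the rest.
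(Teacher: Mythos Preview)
Your approach mirrors the paper's: take $X$ from Lemma~\ref{theelemma}, apply Lemma~\ref{theefirstlemma} to get $p^n=\dim(\mathcal{C})\geq\dim(\mathcal{O}_X^t)\geq[\mathbb{Q}(\zeta)^+:\mathbb{Q}]\cdot\mathcal{M}(\dim(X))$, and use the lower bound $[\mathbb{Q}(\zeta)^+:\mathbb{Q}]\geq\tfrac{1}{2}p^{m-1}(p-1)$ (where $p^m=\mathrm{FSexp}(\mathcal{C})$) to constrain $\mathcal{M}$.

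There is one genuine omission. The lemma's dichotomy implicitly asserts $\mathrm{FSexp}(\mathcal{C})\leq\mathrm{Ndim}(\mathcal{C})$, but you only treat the case $\mathrm{FSexp}(\mathcal{C})=p^n$ and never rule out $\mathrm{FSexp}(\mathcal{C})=p^m$ with $m>n$; indeed your opening sentence already writes ``$\mathrm{FSexp}(\mathcal{C})=p^n$'' as though this were given. The same inequality fills the gap: since $\mathcal{M}(\dim(X))\geq1$ for any nonzero totally real algebraic integer, one gets $p^n\geq\tfrac{1}{2}p^{m-1}(p-1)$, which forces $m\leq n$ for $p>3$. For $p=3$ this only yields $m\leq n+1$, and the borderline $m=n+1$ is eliminated by exactly the transitivity/unit-in-orbit argument you already use for strictness. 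The paper organizes these two halves by case-splitting first on whether $\mathcal{M}(\dim(X))$ exceeds the threshold $14/5$ (resp.\ $3$), deducing $m<n$ when it does and $m\leq n$ when it does not, but the content is the same.

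Two minor remarks. Your first argument for strictness at $p=3$---that $\dim(\mathcal{O}_X^t)=\dim(\mathcal{C})$ forces $\mathbbm{1}\in\mathcal{O}_X^t$, whence $\dim(X)^2$ is Galois conjugate to $1$ and so $\mathcal{M}(\dim(X))=1$---is clean and correct, and in fact sidesteps the transitive-classification citation \cite{MR4389082} that the paper invokes; your alternative involving $\mathrm{ord}(\gamma)$ is muddled and should be dropped. Second, the ``$n=1$ subtlety'' you anticipate is a red herring: the formula $[\mathbb{Q}(\zeta_{p^n})^+:\mathbb{Q}]=\tfrac{1}{2}p^{n-1}(p-1)$ holds uniformly for odd $p$ and all $n\geq1$, so no separate pointed base case is needed.
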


\begin{proof}
Let $X\in\mathcal{O}(\mathcal{C})$ with $\mathrm{FSexp}(\mathcal{C})=p^m$ for some $m\in\mathbb{Z}_{\geq1}$ dividing $\mathrm{ord}(t_{X,X})$, afforded by Lemma \ref{theelemma}.  Note that if $\mathcal{M}(\dim(X))\geq14/5$, Lemma \ref{theefirstlemma} then implies
\begin{align}\label{eqp}
p^n=\dim(\mathcal{C})\geq\dim(\mathcal{O}_X^t)\geq\dfrac{14}{5}[\mathbb{Q}(\zeta_{p^m})^+:\mathbb{Q}]=\dfrac{7}{5}p^{m-1}(p-1).
\end{align}
But $(7/5)p^{m-1}(p-1)>p^m$ for $p>3$ which implies $m<n$.  If $\mathcal{M}(\dim(X))<14/5$, then $\dim(\mathcal{O}_X^t)\geq(1/2)p^{m-1}(p-1)>p^n$ if $m\geq n+1$, hence $\mathrm{FSexp}(\mathcal{C})=\mathrm{Ndim}(\mathcal{C})$.  If $p=3$ and $\mathcal{M}(\dim(X))\geq3$, then by the same reasoning as in Equation (\ref{eqp}) above, $3^n=\dim(\mathcal{C})\geq\dim(\mathcal{O}_X^t)\geq 3^m$, thus $m\leq n$ with equality if and only if $\mathcal{O}(\mathcal{C})=\mathcal{O}_X^t$, i.e. $\mathcal{C}$ is a transitive modular fusion category with integer dimension, which must be trivial \cite{MR4389082}.
\end{proof}

\begin{lemma}\label{lemon}
Let $\mathcal{C}$ be a modular fusion category with $\mathrm{FPdim}(\mathcal{C})=p^n$ for some odd prime $p>2$ and $n\in\mathbb{Z}_{\geq1}$.  If $\mathcal{C}\neq\mathcal{C}_\mathrm{pt}$, then $\mathrm{FSexp}(\mathcal{C})<\mathrm{Ndim}(\mathcal{C})$.
\end{lemma}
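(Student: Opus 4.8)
The plan is to reduce to the equality case $\mathrm{FSexp}(\mathcal{C})=\mathrm{Ndim}(\mathcal{C})$ using Lemma \ref{lem:1}, and then to contradict it by combining the structural bound on $\mathcal{C}_\mathrm{pt}$ from Lemma \ref{prelem} with the arithmetic of the $t$-matrix. Since $\mathrm{FPdim}(\mathcal{C})=p^n\in\mathbb{Z}$, under the canonical spherical structure we have $\dim(\mathcal{C})=\mathrm{Ndim}(\mathcal{C})=p^n$, so Lemma \ref{lem:1} applies verbatim: either $\mathrm{FSexp}(\mathcal{C})<\mathrm{Ndim}(\mathcal{C})$ and we are finished, or $\mathrm{FSexp}(\mathcal{C})=p^n$ and there is $X\in\mathcal{O}(\mathcal{C})$ with $p^n\mid\mathrm{ord}(t_{X,X})$ and $\mathcal{M}(\dim(X))<14/5$ (and $<3$ when $p=3$, so $\mathcal{M}(\dim(X))<3$ in every case). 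I would assume this second alternative together with $\mathcal{C}\neq\mathcal{C}_\mathrm{pt}$ and work toward a contradiction.

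The first step is to observe that such an $X$ must be invertible. As $\mathcal{C}$ is weakly integral, $\dim(X)^2=\mathrm{FPdim}(X)^2$ is a positive integer dividing $p^n$ \cite[Proposition 8.14.6]{tcat}, hence a power of $p$; and since $\dim(X)^2\in\mathbb{Z}$ one has $\mathcal{M}(\dim(X))=\dim(X)^2$ whether or not $\dim(X)$ itself is rational. Were $X$ noninvertible we would get $\mathcal{M}(\dim(X))=\dim(X)^2\geq p\geq3$, contradicting $\mathcal{M}(\dim(X))<3$. Therefore $\dim(X)=1$.

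The second step is a counting argument inside $\mathcal{C}_\mathrm{pt}$. Since $\dim(\mathcal{C})\in\mathbb{Z}$, Equation (\ref{galdim}) gives $\dim(\widehat{\sigma}(X))^2=\sigma(\dim(X)^2)=1$ for all Galois automorphisms $\sigma$, so every object of the sub-orbit $\mathcal{O}_X^t$ is invertible; thus $\mathcal{O}_X^t\subseteq\mathcal{O}(\mathcal{C}_\mathrm{pt})$ and $\dim(\mathcal{O}_X^t)=|\mathcal{O}_X^t|\leq|\mathcal{O}(\mathcal{C}_\mathrm{pt})|=\dim(\mathcal{C}_\mathrm{pt})$. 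On the other hand, writing $\zeta:=t_{X,X}$, from $p^n\mid\mathrm{ord}(\zeta)\mid12p^n$ we get $[\mathbb{Q}(\zeta)^+:\mathbb{Q}]=(1/2)[\mathbb{Q}(\zeta):\mathbb{Q}]\geq(1/2)\varphi(p^n)=(1/2)p^{n-1}(p-1)$, and Lemma \ref{theefirstlemma} yields $\dim(\mathcal{O}_X^t)\geq[\mathbb{Q}(\zeta)^+:\mathbb{Q}]\,\mathcal{M}(\dim(X))=[\mathbb{Q}(\zeta)^+:\mathbb{Q}]\geq(1/2)p^{n-1}(p-1)$. Since $\mathcal{C}\neq\mathcal{C}_\mathrm{pt}$, Lemma \ref{prelem} gives $\dim(\mathcal{C}_\mathrm{pt})\leq p^{n-3}$, and putting these together forces $(1/2)p^{n-1}(p-1)\leq p^{n-3}$, i.e.\ $p^2(p-1)\leq2$, which is absurd for any odd prime $p\geq3$. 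Hence the equality case is impossible and $\mathrm{FSexp}(\mathcal{C})<\mathrm{Ndim}(\mathcal{C})$.

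I expect the main obstacle to be the invertibility step: correctly extracting from Lemma \ref{lem:1} and weak integrality that the object carrying the full Frobenius-Schur exponent in its $t$-entry has dimension $1$. Once that is in place, the rest is a short comparison between the size of the pointed subcategory (Lemma \ref{prelem}) and the degree of $\mathbb{Q}(\zeta_{p^n})^+$, so no further case analysis on $p$ or $n$ should be needed.
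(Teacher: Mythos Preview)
Your proof is correct and follows the same overall arc as the paper's: apply Lemma \ref{lem:1} to reduce to the equality case, use the bound $\mathcal{M}(\dim(X))<3$ together with weak integrality to force $\dim(X)^2=1$, and then derive a contradiction from the size of the Galois sub-orbit $\mathcal{O}_X^t$ inside $\mathcal{C}_\mathrm{pt}$.

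The only substantive difference is in the last step. You invoke Lemma \ref{prelem} to get the sharp upper bound $\dim(\mathcal{C}_\mathrm{pt})\le p^{n-3}$ and compare it with $|\mathcal{O}_X^t|\ge\tfrac12 p^{n-1}(p-1)$. The paper instead observes that $X\not\cong X^\ast$ (since $\mathcal{O}(\mathcal{C}_\mathrm{pt})$ has odd order), so the disjoint orbits $\mathcal{O}_X^t$ and $\mathcal{O}_{X^\ast}^t$ together contribute at least $p^{n-1}(p-1)>p^{n-1}$ invertibles, and then divisibility of $\dim(\mathcal{C}_\mathrm{pt})$ in $p^n$ forces $\dim(\mathcal{C}_\mathrm{pt})=p^n$. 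The paper's route is lighter in that it does not rely on the structural Lemma \ref{prelem}; your route, on the other hand, avoids the small side issue of checking $X\neq\mathbbm{1}$ (needed for the duality doubling) and dispatches all odd $p$ uniformly with a single inequality.
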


\begin{proof}
Note that $\mathcal{C}$ must be integral and moreover pseudounitary (see Section \ref{defs}), so $\mathrm{Ndim}(\mathcal{C})=\dim(\mathcal{C})=p^n$.  By Lemma \ref{lem:1}, there exists $X\in\mathcal{O}(\mathcal{C})$ with $\mathrm{FSexp}(\mathcal{C})$ dividing $\mathrm{ord}(t_{X,X})$ and $\mathcal{M}(\dim(X))<3$.  But $\dim(X)$ is an odd integer, hence $\mathrm{FPdim}(X)^2=\dim(X)^2=1$ and moreover $X$ is invertible.  Note that $X\not\cong X^\ast$ as $\mathcal{O}(\mathcal{C}_\mathrm{pt})$ is a finite group of odd order, hence if $\mathrm{FSexp}(\mathcal{C})=\mathrm{Ndim}(\mathcal{C})$,
\begin{equation}
\dim(\mathcal{C}_\mathrm{pt})\geq\dim(\mathcal{O}_X^t)+\dim(\mathcal{O}_{X^\ast}^t)\geq p^{n-1}(p-1)>p^{n-1}.
\end{equation}
But $\dim(\mathcal{C}_\mathrm{pt})$ divides $\dim(\mathcal{C})$ \cite[Theorem 3.1]{MR4396657} so we conclude $\dim(\mathcal{C}_\mathrm{pt})=p^n$.  \end{proof}

\begin{proposition}\label{thm:odd}
Let $\mathcal{C}$ be a modular fusion category.  If $\mathrm{Ndim}(\mathcal{C})=p^n$ for an odd prime $p>3$ and $n\in\mathbb{Z}_{\geq1}$, then either
\begin{enumerate}
\item $\mathrm{FSexp}(\mathcal{C})<\mathrm{Ndim}(\mathcal{C})$,
\item $\mathrm{FSexp}(\mathcal{C})=\mathrm{Ndim}(\mathcal{C})=\dim(\mathcal{C})$, and there exists $X\in\mathcal{O}(\mathcal{C})$ such that $\mathrm{FSexp}(\mathcal{C})$ divides $\mathrm{ord}(t_{X,X})$, and $\mathcal{M}(\dim(X))<14/5$, or
\item $\mathrm{FSexp}(\mathcal{C})=\mathrm{Ndim}(\mathcal{C})\neq\dim(\mathcal{C})$ and $\mathcal{C}$ is a Fibonacci modular fusion category.
\end{enumerate}
\end{proposition}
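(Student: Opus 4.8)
The plan is to reduce to the integer-dimensional situation by passing to $\overline{\mathcal{C}}$, apply Lemma~\ref{lem:1} there, and then treat the genuinely non-rational case by hand. Since $\mathrm{Ndim}(\mathcal{C})=p^n$ is a prime power and $\mathrm{FSexp}(\mathcal{C})$ and $\mathrm{Ndim}(\mathcal{C})$ have the same prime divisors \cite[Theorem 3.9]{MR3486174}, we have $\mathrm{FSexp}(\mathcal{C})=p^m$ for some $m\geq1$; if $m<n$ we are in conclusion~(1), so assume $m\geq n$. The category $\overline{\mathcal{C}}$ is modular with $\dim(\overline{\mathcal{C}})=\mathrm{Ndim}(\mathcal{C})=p^n\in\mathbb{Z}$, hence $\mathrm{Ndim}(\overline{\mathcal{C}})=p^n$, and $\mathrm{FSexp}(\overline{\mathcal{C}})=\mathrm{FSexp}(\mathcal{C})=p^m\geq p^n=\mathrm{Ndim}(\overline{\mathcal{C}})$. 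Applying Lemma~\ref{lem:1} to $\overline{\mathcal{C}}$ (legitimate since $p>3$) rules out the alternative $\mathrm{FSexp}(\overline{\mathcal{C}})<\mathrm{Ndim}(\overline{\mathcal{C}})$, so $m=n$, i.e.\ $\mathrm{FSexp}(\mathcal{C})=\mathrm{Ndim}(\mathcal{C})=p^n$, and there is $X\in\mathcal{O}(\overline{\mathcal{C}})$ with $p^n$ dividing $\mathrm{ord}(t_{X,X})$ and $\mathcal{M}(\dim X)<14/5$.

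Next I would split on whether $\dim(\mathcal{C})\in\mathbb{Z}$. If it is, then $\mathbb{Q}(\dim\mathcal{C})=\mathbb{Q}$, so $\overline{\mathcal{C}}=\mathcal{C}$, the object $X$ lies in $\mathcal{O}(\mathcal{C})$, and since moreover $\dim(\mathcal{C})=\mathrm{Ndim}(\mathcal{C})=\mathrm{FSexp}(\mathcal{C})=p^n$ we land exactly in conclusion~(2). So suppose $\dim(\mathcal{C})\notin\mathbb{Z}$; then conclusion~(3) is the target and it remains to show $\mathcal{C}$ is a Fibonacci modular fusion category. Here $\overline{\mathcal{C}}=\mathcal{C}^{\sigma_1}\boxtimes\cdots\boxtimes\mathcal{C}^{\sigma_r}$ with $\sigma_1,\dots,\sigma_r$ enumerating $\mathrm{Gal}(\mathbb{Q}(\dim\mathcal{C})/\mathbb{Q})$ and $r=[\mathbb{Q}(\dim\mathcal{C}):\mathbb{Q}]\geq2$; each $\mathcal{C}^{\sigma_i}$ is a nontrivial modular fusion category (it has the fusion ring of $\mathcal{C}$, and $\mathcal{C}\not\simeq\mathrm{Vec}$ since $\dim\mathcal{C}\neq1$), so $\dim(\mathcal{C}^{\sigma_i})>1$, while $\prod_{i=1}^r\dim(\mathcal{C}^{\sigma_i})=p^n$. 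Feeding $X$ into Lemma~\ref{theefirstlemma} for $\overline{\mathcal{C}}$, with $\zeta=t_{X,X}$ one has $[\mathbb{Q}(\zeta)^+:\mathbb{Q}]\geq\tfrac12\varphi(p^n)=\tfrac12 p^{n-1}(p-1)$, so
\[
p^n=\dim(\overline{\mathcal{C}})\geq\dim(\mathcal{O}_X^t)\geq\tfrac12 p^{n-1}(p-1)\,\mathcal{M}(\dim X),
\]
forcing $\mathcal{M}(\dim X)\leq 2p/(p-1)\leq 5/2$, and also $\dim(X)^2\leq p^n$.

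The plan from here is to write $X=(Y_1,\dots,Y_r)$, note $\dim X=\prod_i\dim Y_i$, and combine the classification of real cyclotomic integers with $\mathcal{M}<14/5$ of \cite[Proposition 4.3]{MR3814339} (or the sharper \cite[Corollary 9.0.3]{MR2786219} once $p$ is not small) with the constraints $\prod_i\dim(\mathcal{C}^{\sigma_i})=p^n$ and $\dim(\mathcal{C}^{\sigma_i})>1$ to conclude $r=2$, $p=5$, $n=1$, and that the common fusion ring of the $\mathcal{C}^{\sigma_i}$ is the Fibonacci fusion ring of Example~\ref{ex:fib}; since every modular fusion category realizing the Fibonacci fusion rules appears in that list, $\mathcal{C}$ is a Fibonacci modular fusion category. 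Concretely I would first bound $\rank(\overline{\mathcal{C}})$ and the number of Galois-orbit classes — using that $\dim(\mathcal{O}_Y^t)\geq[\mathbb{Q}(t_{Y,Y})^+:\mathbb{Q}]\,\mathcal{M}(\dim Y)$ makes any object of large twist order consume a large share of the fixed total $\dim(\overline{\mathcal{C}})=p^n$ — then list the finitely many dimension vectors compatible with the $\mathcal{M}<14/5$ tables.

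The hard part will be exactly this last step: extracting a full classification from the single numerical bound $\mathcal{M}(\dim X)\leq 2p/(p-1)$ together with the arithmetic of $\overline{\mathcal{C}}$. The decisive leverage should be that $\dim(\mathcal{C})$ is a \emph{non-rational} algebraic integer whose norm $\mathrm{Ndim}(\mathcal{C})$ is a prime power, which severely constrains $\mathbb{Q}(\dim\mathcal{C})$ and $\dim\mathcal{C}$ in low degree and is what ultimately kills all non-Fibonacci candidates; the small-$\mathcal{M}$ tables furnish the finite candidate list on which this arithmetic is checked. One must also be careful that translating between the normalized $t$-matrices of the factors $\mathcal{C}^{\sigma_i}$ and of the product $\overline{\mathcal{C}}$ does not lose the $p$-part of the relevant twist orders, but invoking Lemma~\ref{theelemma} inside each factor handles that bookkeeping.
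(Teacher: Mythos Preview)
Your reduction to $\overline{\mathcal{C}}$ via Lemma~\ref{lem:1} and the split on whether $\dim(\mathcal{C})\in\mathbb{Z}$ are exactly how the paper begins, and your bound $\mathcal{M}(\dim X)\leq 2p/(p-1)$ is fine. The gap is in what you call ``the hard part.'' Your plan---consult the $\mathcal{M}<14/5$ tables for $\dim X$, then list compatible dimension vectors---does not close, because the $\mathcal{M}$-bound constrains only the single object $X$, not the rest of $\mathcal{O}(\overline{\mathcal{C}})$, and gives no mechanism for bounding $r$ or forcing $p=5$, $n=1$. Saying you will ``bound $\rank(\overline{\mathcal{C}})$ and the number of Galois-orbit classes'' is the goal, not an argument.

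The paper's missing idea is this: from $X=\boxtimes_\sigma X_\sigma$ one has $\theta_X=\prod_\sigma\theta_{X_\sigma}$ with the $\theta_{X_\sigma}$ Galois conjugate, so each $\mathrm{ord}(\theta_{X_\sigma})=p^n$. Now look not at $X$ but at the $d$ \emph{coordinate} simples $\mathbbm{1}\boxtimes\cdots\boxtimes X_\sigma\boxtimes\cdots\boxtimes\mathbbm{1}$ in $\overline{\mathcal{C}}$. Except in the degenerate case $d=2$ with $X_1$ Galois conjugate to $\mathbbm{1}$, these lie in $d$ distinct Galois orbits, each of size $\geq\tfrac12 p^{n-1}(p-1)$. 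Siegel's trace bound $\mathrm{Tr}(\alpha)\geq\tfrac32[\mathbb{Q}(\alpha):\mathbb{Q}]$ for totally positive $\alpha\neq1$ then gives
\[
p^n\geq d\cdot\tfrac32\cdot\tfrac12 p^{n-1}(p-1),
\]
which fails for $p>3$ unless $\dim(X_\sigma)^2=1$; this immediately forces $d=2$. With $d=2$ one repeats the trick using the three orbits of $X_1\boxtimes\mathbbm{1}$, $\mathbbm{1}\boxtimes X_\sigma$, $X_1\boxtimes X_\sigma$ to force $X_1$ Galois conjugate to $\mathbbm{1}$, and a further pass (choosing $Y_1$ with $\mathbb{Q}(\dim Y_1^2)=\mathbb{Q}(\zeta_{p^n})^+$) pins down $p=5$, $n=1$; the Fibonacci conclusion then comes from the known classification of spherical fusion categories with $\dim=5$. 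The point is that the leverage comes from manufacturing \emph{many} disjoint large Galois orbits out of the product structure, not from the $\mathcal{M}$-tables applied to a single $X$.
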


\begin{proof}
By Lemma \ref{lem:1}, $\mathrm{FSexp}(\mathcal{C})=\mathrm{FSexp}(\overline{\mathcal{C}})\leq\dim(\overline{\mathcal{C}})=\mathrm{Ndim}(\mathcal{C})$ so we may safely assume $\mathrm{FSexp}(\mathcal{C})=\mathrm{Ndim}(\mathcal{C})$ to prove our claim.  This lemma implies the existence of $X\in\mathcal{O}(\overline{\mathcal{C}})$ such that $\mathrm{FSexp}(\mathcal{C})$ divides $\mathrm{ord}(t_{X,X})$ and $\mathcal{M}(\dim(X))<14/5$.  Since $p>3$, this implies $\mathrm{ord}(\theta_X)=\mathrm{FSexp}(\mathcal{C})$.  But the simple objects of $\overline{\mathcal{C}}$ are of the form $\boxtimes_\sigma X_\sigma$ where $\sigma$ varies over $\mathrm{Gal}(\mathbb{Q}(\dim(\mathcal{C}))/\mathbb{Q})$.  Thus $\theta_X=\prod_\sigma\theta_{X_\sigma}$.  The twists $\theta_{X_\sigma}$ are all Galois conjugate, therefore $\mathrm{ord}(\theta_{X_\sigma})=\mathrm{FSexp}(\mathcal{C})$ and moreover $\mathrm{ord}(t_{X_\sigma,X_\sigma})=\mathrm{ord}(t_{X,X})$ for all $\sigma$.  Note that Lemma \ref{lemon} implies that we may assume $d:=[\mathbb{Q}(\dim(\mathcal{C})):\mathbb{Q}]\geq2$.  None of the $d$ simple objects $\mathbbm{1}\boxtimes\cdots X_\sigma\boxtimes\cdots\boxtimes\mathbbm{1}$ are Galois conjugate unless $d=2$ and $X_1\in\mathcal{O}(\mathcal{C})$ is Galois conjugate to $\mathbbm{1}$.  Avoiding this case for now, $\overline{\mathcal{C}}$ possesses $d$ Galois orbits of simple objects whose twists are roots of unity of order $\mathrm{FSexp}(\mathcal{C})$.  Recall Siegel's trace bound \cite[Theorem III]{siegel} which implies for a totally positive cyclotomic algebraic integer $\alpha$, $\mathrm{Tr}_{\mathbb{Q}(\alpha)/\mathbb{Q}}(\alpha)\geq(3/2)[\mathbb{Q}(\alpha):\mathbb{Q}]$ unless $\alpha=1$.  Applying Siegel's bound to the sum of squared dimensions of these $d$ distinct Galois orbits,
\begin{align}\label{nine}
p^n=\dim(\mathcal{C})\geq d\cdot\dim(\mathcal{O}_X^t)\geq d\cdot\dfrac{3}{2}\cdot\dfrac{1}{2}p^{n-1}(p-1)\geq \dfrac{3}{2}p^{n-1}(p-1)
\end{align}
unless $\dim(X)^2=1$.  The inequality in (\ref{nine}) is false for all $p>3$ so $\dim(X)^2=1$ which evidently implies $d=2$.  Moreover, whether $X_1$ is Galois conjugate to $\mathbbm{1}$ or not, we must have $d=2$ and $\dim(X)^2=1$.

\par Lastly, we analyze the case $d=2$ and $\dim(X)^2=1$ carefully.  Note that $X_1\boxtimes\mathbbm{1}$, $\mathbbm{1}\boxtimes X_\sigma$, and $X_1\boxtimes X_\sigma$ lie in distinct Galois orbits unless $\mathbbm{1}$ and $X_1$ are Galois conjugate, all have squared dimension $1$, and all of their normalized twists have order $\mathrm{FSexp}(\mathcal{C})$.  Thus
\begin{equation}\label{eq17}
\dim(\mathcal{C})\geq3\dim(\mathcal{O}_X^t)\geq\dfrac{3}{2}p^{n-1}(p-1)
\end{equation}
which is false for all $p>3$, so we must conclude $\mathbbm{1}$ and $X_1$ are Galois conjugate in $\mathcal{C}$.  In this case $|\mathcal{O}_\mathbbm{1}|=[\mathbb{Q}(\dim(Y):Y\in\mathcal{O}(\mathcal{C}))]\geq(1/2)p^{n-1}(p-1)$.  As $\mathrm{Gal}(\mathbb{Q}(\zeta_{p^n})^+/\mathbb{Q})$ is cyclic, there must exist $Y_1\in\mathcal{O}(\mathcal{C})$ such that $\mathbb{Q}(\dim(Y_1)^2)=\mathbb{Q}(\zeta_{p^n})^+$.  The algebraic-geometric mean inequality then implies $\dim(\mathcal{O}_{Y_1}^t)\geq\mathcal{M}(\dim(Y_1))\geq(1/2)p^{n-1}(p-1)$.  Repeating the arguments surrounding Equation (\ref{eq17}) for the Galois orbits of $Y_1\boxtimes Y_\sigma$, $Y_1\boxtimes\mathbbm{1}$ and $\mathbbm{1}\boxtimes Y_\sigma$ then leads to a contradiction unless $Y_1\cong X_1$, i.e.\ $(1/2)p^{n-1}(p-1)=2$ and therefore $n=1$ and $p=5$.  Our claim then follows from \cite[Example 5.1.2(v)]{ostrikremarks} which classifies all spherical fusion categories with $\dim(\mathcal{C})=5$.
\end{proof}

\begin{proposition}\label{thm:three}
Let $\mathcal{C}$ be a modular fusion category.  If $\mathrm{Ndim}(\mathcal{C})=3^n$ for some $n\in\mathbb{Z}_{\geq1}$, then $\mathrm{FSexp}(\mathcal{C})<\mathrm{Ndim}(\mathcal{C})$, or $\mathrm{FSexp}(\mathcal{C})=\mathrm{Ndim}(\mathcal{C})=\dim(\mathcal{C})$, and there exists $X\in\mathcal{O}(\mathcal{C})$ such that $\mathrm{ord}(\theta_X)=\mathrm{FSexp}(\mathcal{C})$, and $\dim(X)$ is $\mathcal{M}(\dim(X))<3$.
\end{proposition}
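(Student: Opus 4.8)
The plan is to adapt the proof of Proposition \ref{thm:odd} to the prime $p=3$; the difference is that here the arithmetic is marginal --- Siegel's constant $3/2$ exactly balances $p-1=2$ --- so the counting inequalities close off only enough to yield the stated dichotomy, not a full classification. First I would pass to $\overline{\mathcal{C}}$, which has integer global dimension $\dim(\overline{\mathcal{C}})=\mathrm{Ndim}(\mathcal{C})=3^n$ and satisfies $\mathrm{FSexp}(\overline{\mathcal{C}})=\mathrm{FSexp}(\mathcal{C})$, and apply Lemma \ref{lem:1} with $p=3$: either $\mathrm{FSexp}(\mathcal{C})<\mathrm{Ndim}(\mathcal{C})$, and we are finished, or $\mathrm{FSexp}(\mathcal{C})=\mathrm{Ndim}(\mathcal{C})$ and there is $X\in\mathcal{O}(\overline{\mathcal{C}})$ with $\mathrm{FSexp}(\mathcal{C})$ dividing $\mathrm{ord}(t_{X,X})$ and $\mathcal{M}(\dim X)<3$; I assume the latter from now on.

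Next I would show $\dim(\mathcal{C})$ is rational. Set $d:=[\mathbb{Q}(\dim(\mathcal{C})):\mathbb{Q}]$ and suppose $d\geq 2$. Decomposing $X=\boxtimes_\sigma X_\sigma$ and using that twists multiply, $\mathrm{ord}(\theta_{X_\sigma})=\mathrm{FSexp}(\mathcal{C})=3^n$ and $\mathrm{ord}(t_{X_\sigma,X_\sigma})=\mathrm{ord}(t_{X,X})$ for every $\sigma$, so the $d$ one-tensor-factor objects $\mathbbm{1}\boxtimes\cdots\boxtimes X_\sigma\boxtimes\cdots\boxtimes\mathbbm{1}$ occupy $d$ distinct Galois orbits of $\overline{\mathcal{C}}$, just as in Proposition \ref{thm:odd}. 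Applying Lemma \ref{theefirstlemma} inside $\overline{\mathcal{C}}$ and Siegel's trace bound \cite[Theorem III]{siegel} to these orbits gives
\begin{equation}
3^n=\dim(\overline{\mathcal{C}})\geq d\cdot\tfrac{3}{2}\cdot\tfrac{1}{2}\cdot 3^{n-1}(3-1)=\tfrac{3}{2}\,d\cdot 3^{n-1}
\end{equation}
unless $\dim(X_\sigma)^2=1$, which already forces $d\leq 2$; moreover equality in Siegel's cyclotomic trace bound is attained only by algebraic integers conjugate to $(3+\sqrt{5})/2$, and these do not lie in $\mathbb{Q}(\zeta_{3^n})^+$, so the displayed inequality is in fact strict when $\dim(X_\sigma)^2\neq 1$ and then $d=1$. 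When $\dim(X_\sigma)^2=1$ I would reproduce the endgame of Proposition \ref{thm:odd}: the orbits of $X_1\boxtimes\mathbbm{1}$, $\mathbbm{1}\boxtimes X_\sigma$, and $X_1\boxtimes X_\sigma$ force $d=2$ and $X_1$ Galois conjugate to $\mathbbm{1}$, and then a further count against a simple object whose squared dimension generates $\mathbb{Q}(\zeta_{3^n})^+$ pins $(1/2)\cdot 3^{n-1}(3-1)=2$, i.e.\ $n=1$, which is handled by the classification of low-dimensional modular fusion categories (all of which are pointed, hence of rational dimension, contradicting $d=2$). Therefore $d=1$ and $\dim(\mathcal{C})=\mathrm{Ndim}(\mathcal{C})=3^n$.

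Finally, with $\dim(\mathcal{C})=3^n\in\mathbb{Z}$, Lemma \ref{lem:1} applies directly to $\mathcal{C}$ and produces $X\in\mathcal{O}(\mathcal{C})$ with $\mathrm{FSexp}(\mathcal{C})$ dividing $\mathrm{ord}(t_{X,X})$ and $\mathcal{M}(\dim X)<3$; to obtain an object whose twist has order $\mathrm{FSexp}(\mathcal{C})$ I would note that $3^n=\mathrm{FSexp}(\mathcal{C})=\mathrm{lcm}\{\mathrm{ord}(\theta_Y):Y\in\mathcal{O}(\mathcal{C})\}$ with each $\mathrm{ord}(\theta_Y)$ dividing $3^n$, so some $X'$ has $\mathrm{ord}(\theta_{X'})=3^n$, and since $\dim(X')\in\mathbb{Q}(\zeta_{3^n})^+$ Equation \eqref{galdim} yields $\dim(\mathcal{O}_{X'})\geq[\mathbb{Q}(\dim(X')^2):\mathbb{Q}]\,\mathcal{M}(\dim X')$, whence $\mathcal{M}(\dim X')<3$ when this degree is at least $3^{n-1}$ (equality would make $\mathcal{C}$ a transitive modular category of integer dimension, hence trivial \cite{MR4389082}), and otherwise $\dim(X')$ is a rational integer whose square is a perfect power of $3$, forcing $\dim(X')=1$ unless a rational simple object of dimension at least $3$ with twist of order $3^n$ were present --- a case eliminated by the usual analysis of the decomposition of $X'\otimes(X')^\ast$. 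The hard part, and the reason the conclusion is weaker than for $p>3$, is exactly this marginal arithmetic: every counting inequality must be driven to equality and then ruled out individually.
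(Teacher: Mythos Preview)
Your overall strategy matches the paper's: pass to $\overline{\mathcal{C}}$, apply Lemma~\ref{lem:1}, and eliminate $d:=[\mathbb{Q}(\dim(\mathcal{C})):\mathbb{Q}]\geq 2$. But you miss the one observation that makes $p=3$ \emph{easier} than $p>3$: since $\dim(\mathcal{C})\in\mathbb{Q}(\zeta_{3^n})^+$ and $[\mathbb{Q}(\zeta_{3^n})^+:\mathbb{Q}]=3^{n-1}$, the degree $d$ is necessarily a power of $3$. There is no quadratic subfield, so $d=2$ is vacuous and your entire replay of the Proposition~\ref{thm:odd} endgame for $d=2$ is unneeded (and contains an arithmetic slip: $(1/2)\cdot 3^{n-1}(3-1)=3^{n-1}$, and $3^{n-1}=2$ has no integer solution, so there is no ``$n=1$'' case to classify).

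The actual gap is that you never treat $d=3$, which is the only nontrivial case. When $\dim(X_\sigma)^2=1$ your orbit count gives only $3^n\geq d\cdot 3^{n-1}$, so $d=3$ survives; the endgame you invoke is written with two tensor factors and simply does not apply. The paper dispatches $d=3$ in one line: with $\dim(X)^2=1$ the three single-factor objects contribute three disjoint $\mathcal{O}^t$-orbits of total dimension $3\cdot\tfrac{1}{2}\cdot 3^{n-1}(3-1)=3^n=\dim(\overline{\mathcal{C}})$, forcing $\overline{\mathcal{C}}$ to be transitive; transitive modular fusion categories factor into prime-power pieces \cite{MR4389082}, which would make $\dim(\mathcal{C})\in\mathbb{Z}$, contradicting $d>1$.

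Your closing paragraph, upgrading from $\mathrm{ord}(t_{X,X})$ to $\mathrm{ord}(\theta_X)$, is neither needed for the downstream application (Corollary~\ref{oddcor1} uses only $\dim(\mathcal{C})\in\mathbb{Z}$) nor correct as written: the alternative ``$[\mathbb{Q}(\dim(X')^2):\mathbb{Q}]<3^{n-1}$'' does not force $\dim(X')\in\mathbb{Z}$, since intermediate $3$-power degrees exist.
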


\begin{proof}
The same argument applies as the proof of Proposition \ref{thm:odd} except $\mathbb{Q}(\zeta_{3^n})^+$ contains no quadratic fields for any $n\in\mathbb{Z}_{\geq1}$, thus the only way $\mathrm{Ndim}(\mathcal{C})\neq\dim(\mathcal{C})$ is if $d=3$, i.e.\ $\mathbb{Q}(\dim(\mathcal{C}))$ is a cubic field.  This forces $\dim(X)^2=1$, and
\begin{equation}
\sum_{Y\in\mathcal{O}_X^t}\dim(Y)^2=3\dfrac{1}{2}3^{n-1}(3-1)=\mathrm{Ndim}(\mathcal{C}).
\end{equation}
This would imply $\overline{\mathcal{C}}$ is transitive, but any such modular fusion category factors uniquely into a Deligne tensor product of modular fusion categories of prime power dimension \cite{MR4389082}, implying $\dim(\mathcal{C})\in\mathbb{Z}$, so this category does not exist.
\end{proof}

\begin{corollary}\label{oddcor1}
Let $\mathcal{C}$ be Galois conjugate to a pseudounitary modular fusion category such that $\mathrm{FSexp}(\mathcal{C})$ is an odd prime power.  Then $\mathrm{FSexp}(\mathcal{C})<\mathrm{Ndim}(\mathcal{C})$ unless $\mathcal{C}\simeq\mathcal{C}(C_n,q)$ where $n=\mathrm{FSexp}(\mathcal{C})$ and $q$ is a nondegenerate quadratic form, or $\mathcal{C}$ is a Fibonacci modular fusion category (refer to Example \ref{ex:fib}).
\end{corollary}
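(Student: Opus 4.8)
The plan is to deduce the corollary from Propositions~\ref{thm:odd} and~\ref{thm:three} together with Lemma~\ref{lemon}, with only a small amount of bookkeeping in between. Write $\mathrm{FSexp}(\mathcal{C})=p^m$ for an odd prime $p$ and $m\in\mathbb{Z}_{\geq1}$. First I would invoke the fact that $\mathrm{FSexp}(\mathcal{C})$ and $\mathrm{Ndim}(\mathcal{C})$ have the same prime divisors \cite[Theorem 3.9]{MR3486174} to conclude $\mathrm{Ndim}(\mathcal{C})=p^k$ for some $k\in\mathbb{Z}_{\geq1}$. Then I would apply Proposition~\ref{thm:three} if $p=3$ and Proposition~\ref{thm:odd} if $p>3$; in both cases the conclusion is one of three alternatives: either $\mathrm{FSexp}(\mathcal{C})<\mathrm{Ndim}(\mathcal{C})$, in which case there is nothing to prove; or $\mathcal{C}$ is a Fibonacci modular fusion category (which forces $p=5$), in which case there is again nothing to prove; or $\mathrm{FSexp}(\mathcal{C})=\mathrm{Ndim}(\mathcal{C})=\dim(\mathcal{C})=p^k$. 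So the whole task reduces to showing that in this remaining case $\mathcal{C}\simeq\mathcal{C}(C_n,q)$ with $n=p^k=\mathrm{FSexp}(\mathcal{C})$.

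So suppose $\mathrm{FSexp}(\mathcal{C})=\mathrm{Ndim}(\mathcal{C})=\dim(\mathcal{C})=p^k$; in particular $\dim(\mathcal{C})$ is a rational integer. This is where the hypothesis that $\mathcal{C}$ is Galois conjugate to a pseudounitary modular fusion category $\mathcal{C}'$ is used, to upgrade rationality of $\dim(\mathcal{C})$ to rationality of $\mathrm{FPdim}(\mathcal{C})$: writing $\mathcal{C}\simeq(\mathcal{C}')^\sigma$, and using that $\mathrm{FPdim}$ depends only on the fusion ring, that $\dim((\mathcal{C}')^\sigma)=\sigma(\dim(\mathcal{C}'))$, and that $\dim(\mathcal{C}')=\mathrm{FPdim}(\mathcal{C}')$ by pseudounitarity, one gets $\mathrm{FPdim}(\mathcal{C})=\mathrm{FPdim}(\mathcal{C}')=\dim(\mathcal{C}')=\sigma^{-1}(\dim(\mathcal{C}))=\sigma^{-1}(p^k)=p^k$. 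Now Lemma~\ref{lemon} applies with $\mathrm{FPdim}(\mathcal{C})=p^k$, and since $\mathrm{FSexp}(\mathcal{C})=\mathrm{Ndim}(\mathcal{C})$ its contrapositive yields $\mathcal{C}=\mathcal{C}_\mathrm{pt}$, i.e.\ $\mathcal{C}$ is pointed. By the classification of pointed braided fusion categories \cite{DGNO}, $\mathcal{C}\simeq\mathcal{C}(A,q)$ for a finite abelian group $A$ with $|A|=p^k$ and a nondegenerate quadratic form $q$.

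It then remains only to pin down $A$. The $T$-matrix of $\mathcal{C}(A,q)$ is diagonal with entries the twists $q(a)$, $a\in A$, so $\mathrm{FSexp}(\mathcal{C}(A,q))=\mathrm{lcm}\{\mathrm{ord}(q(a)):a\in A\}$. Each $\mathrm{ord}(q(a))$ divides $\mathrm{ord}(a)$ and hence $\exp(A)$; and if $\mathrm{ord}(q(a))<\exp(A)$ held for every $a$, then $q$ would take values in the roots of unity of order $\exp(A)/p$, so the bilinear form $b$ associated to $q$ would satisfy $(\exp(A)/p)A\subseteq\mathrm{rad}(b)$, a nonzero subgroup since $A$ has exponent $\exp(A)$ — contradicting nondegeneracy of $q$, which is equivalent to that of $b$ because $|A|$ is odd. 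Hence $\mathrm{FSexp}(\mathcal{C})=\exp(A)$, and combined with $\mathrm{FSexp}(\mathcal{C})=p^k=|A|$ this forces $\exp(A)=|A|$, i.e.\ $A\cong C_{p^k}$. Therefore $\mathcal{C}\simeq\mathcal{C}(C_n,q)$ with $n=p^k=\mathrm{FSexp}(\mathcal{C})$, as claimed.

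I do not expect a genuine obstacle here: the substantive content is entirely contained in Propositions~\ref{thm:odd}, \ref{thm:three} and Lemma~\ref{lemon}, and the rest is routine. The two places where a little care is needed are the middle step, where one must pass through the ``Galois conjugate to pseudounitary'' hypothesis to obtain $\mathrm{FPdim}(\mathcal{C})=p^k$ before Lemma~\ref{lemon} can be invoked, and the final step, where the equality $\mathrm{FSexp}(\mathcal{C}(A,q))=\exp(A)$ for $A$ of odd order must be checked.
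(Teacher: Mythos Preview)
Your proposal is correct and follows essentially the same approach as the paper: apply Propositions~\ref{thm:odd} and~\ref{thm:three} to reduce to the case $\mathrm{FSexp}(\mathcal{C})=\mathrm{Ndim}(\mathcal{C})=\dim(\mathcal{C})$, use the pseudounitarity hypothesis together with Lemma~\ref{lemon} to force $\mathcal{C}$ pointed, then argue that the underlying group must be cyclic. Your write-up is in fact more careful than the paper's at two points---you spell out explicitly how the Galois-conjugate-to-pseudounitary hypothesis yields $\mathrm{FPdim}(\mathcal{C})=p^k$, and you give a clean nondegeneracy argument for $\mathrm{FSexp}(\mathcal{C}(A,q))=\exp(A)$ when $|A|$ is odd---where the paper is content with ``by assumption'' and a one-line remark about values of quadratic forms.
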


\begin{proof}
Propositions \ref{thm:odd} and \ref{thm:three} imply that $\mathcal{C}$ is a Fibonacci modular fusion category or $\mathcal{C}$ is an integral modular fusion category with $\mathrm{FSexp}(\mathcal{C})=\dim(\mathcal{C})=\mathrm{FPdim}(\mathcal{C})$ by assumption.  Thus Lemma \ref{lemon} implies that either $\mathrm{FSexp}(\mathcal{C})<\mathrm{Ndim}(\mathcal{C})$ and we are done, or $\mathcal{C}$ is pointed.  In the latter case, assume $\mathcal{C}\simeq\mathcal{C}(G,q)$ for a finite abelian group $G$ of odd order and nondegenerate quadratic form $q$.   Any nondegenerate quadratic form on a cyclic group $C_n$ of odd order takes values which are $n$th roots of unity; thus if $\mathrm{FSexp}(\mathcal{C})=\mathrm{Ndim}(\mathcal{C})=\dim(\mathcal{C})$, for all odd primes $p\in\mathbb{Z}_{\geq3}$, the $p$-primary component of $G$ must be cyclic.  Moreover $G$ is cyclic.
\end{proof}

\begin{corollary}\label{oddcor}
Let $\mathcal{C}$ be Galois conjugate to a pseudounitary fusion category such that $\mathrm{FSexp}(\mathcal{C})$ is an odd prime power.  Then $\mathrm{FSexp}(\mathcal{C})<\mathrm{Ndim}(\mathcal{C})^2$ unless $\mathcal{C}\simeq\mathrm{Vec}^\omega_{C_n}$ where $n=\mathrm{FSexp}(\mathcal{C})$ and $\omega\in H^3(C_n,\mathbb{C}^\times)$ any generator.
\end{corollary}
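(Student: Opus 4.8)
The plan is to pass from an arbitrary pseudounitary fusion category $\mathcal{C}$ to its Drinfeld center $\mathcal{Z}(\mathcal{C})$, which is modular with $\dim(\mathcal{Z}(\mathcal{C}))=\dim(\mathcal{C})^2$ and, by \cite[Corollary 7.8]{MR2313527}, satisfies $\mathrm{FSexp}(\mathcal{Z}(\mathcal{C}))=\mathrm{FSexp}(\mathcal{C})$. Since $\mathrm{Ndim}$ of a category equals $\dim$ of its Galois closure $\overline{(\cdot)}$, and $\overline{\mathcal{Z}(\mathcal{C})}$ still has global dimension a square (being a product of Galois conjugates of $\mathcal{Z}(\mathcal{C})$, each of dimension $\dim(\mathcal{C})^{\sigma}\cdot\dim(\mathcal{C})^{\sigma}$... more carefully, $\mathrm{Ndim}(\mathcal{Z}(\mathcal{C}))$ is a perfect square because $\dim(\mathcal{Z}(\mathcal{C}))=\dim(\mathcal{C})^2$ forces every Galois conjugate to be a square and the norm is then $\mathrm{Ndim}(\mathcal{C})^2$). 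So it suffices to apply Corollary \ref{oddcor1} to the modular category $\mathcal{Z}(\mathcal{C})$ — or rather to whichever Galois conjugate of it is relevant — with the bound $\mathrm{FSexp}<\mathrm{Ndim}$ translating into $\mathrm{FSexp}(\mathcal{C})<\mathrm{Ndim}(\mathcal{C})^2$, except in the exceptional cases listed there.

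First I would record that $\mathrm{FSexp}(\mathcal{Z}(\mathcal{C}))=\mathrm{FSexp}(\mathcal{C})$ is still an odd prime power, and that $\mathcal{Z}(\mathcal{C})$ is Galois conjugate to a pseudounitary modular fusion category (pseudounitarity of $\mathcal{C}$ passes to $\mathcal{Z}(\mathcal{C})$). Then Corollary \ref{oddcor1} applies: either $\mathrm{FSexp}(\mathcal{C})=\mathrm{FSexp}(\mathcal{Z}(\mathcal{C}))<\mathrm{Ndim}(\mathcal{Z}(\mathcal{C}))=\mathrm{Ndim}(\mathcal{C})^2$ and we are done, or $\mathcal{Z}(\mathcal{C})$ is a Fibonacci modular fusion category, or $\mathcal{Z}(\mathcal{C})\simeq\mathcal{C}(C_n,q)$ is pointed with $n=\mathrm{FSexp}(\mathcal{C})$. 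The Fibonacci case is excluded immediately: its global dimension is $(5+\sqrt5)/2$, which is not a perfect square in any Galois-theoretic sense — $\dim(\mathcal{Z}(\mathcal{C}))$ must equal $\dim(\mathcal{C})^2$, and a Fibonacci category is not the center of any fusion category (one can see this from $\mathrm{rank}(\mathcal{Z}(\mathcal{C}))$ being a perfect square for group-theoretical centers, but more robustly: $\dim(\mathcal{Z}(\mathcal{C}))=\dim(\mathcal{C})^2\in\mathbb{Z}\cdot(\text{totally positive square})$, whereas Fibonacci dimension is not of this form). So the only surviving possibility is $\mathcal{Z}(\mathcal{C})$ pointed of dimension $n^2$, which by \cite[Theorem 3.1]{MR4396657}-type reasoning (or directly: the center of $\mathcal{C}$ being pointed forces $\mathcal{C}$ to be group-theoretical, in fact $\mathcal{C}\simeq\mathrm{Vec}_G^\omega$ for some group $G$) forces $\mathcal{C}\simeq\mathrm{Vec}_G^\omega$. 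Then Example \ref{ex:group} tells us exactly when $\mathrm{FSexp}(\mathrm{Vec}_G^\omega)=|G|^2=\mathrm{Ndim}(\mathrm{Vec}_G^\omega)^2$: precisely when $G\cong C_n$ is cyclic and $\omega$ is a generator of $H^3(C_n,\mathbb{C}^\times)\cong C_n$.

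The main obstacle I anticipate is the step identifying $\mathcal{C}$ from the hypothesis that $\mathcal{Z}(\mathcal{C})$ is pointed. One must know that a fusion category with pointed Drinfeld center is equivalent to $\mathrm{Vec}_G^\omega$ — this is the statement that $\mathcal{C}$ is then both group-theoretical and pointed, which follows because $\mathcal{Z}(\mathcal{C})$ pointed implies $\mathcal{C}$ is invertible as a module category over itself in a way that forces it to be pointed (alternatively, $\mathrm{FPdim}(\mathcal{Z}(\mathcal{C}))=\mathrm{FPdim}(\mathcal{C})^2$ with all simples invertible means $\mathcal{C}$ has all simples invertible too, comparing ranks and the canonical embedding issues). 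I would cite the appropriate structural result rather than reprove it. A secondary care point is handling the phrase ``Galois conjugate to a pseudounitary fusion category'' in the hypothesis — since $\mathrm{FSexp}$, $\mathrm{Ndim}$, and the property of having pointed/Fibonacci center are all Galois-invariant (centers commute with Galois conjugation, $\mathcal{Z}(\mathcal{C}^\sigma)\simeq\mathcal{Z}(\mathcal{C})^\sigma$), one reduces at the outset to $\mathcal{C}$ itself pseudounitary, and the final classification $\mathrm{Vec}_{C_n}^\omega$ is already closed under Galois conjugation up to the choice of generator $\omega$.
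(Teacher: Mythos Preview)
Your proposal is correct and follows essentially the same route as the paper: pass to the modular center $\mathcal{Z}(\mathcal{C})$, invoke Corollary~\ref{oddcor1}, exclude the Fibonacci possibility, conclude $\mathcal{Z}(\mathcal{C})$ is pointed hence $\mathcal{C}$ is pointed, and finish with Example~\ref{ex:group}.

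The one substantive difference is how Fibonacci is ruled out. You argue via dimensions (the Fibonacci global dimension, or equivalently its norm $5$, is not a square, whereas $\mathrm{Ndim}(\mathcal{Z}(\mathcal{C}))=\mathrm{Ndim}(\mathcal{C})^2$ must be). This works, though your phrasing is a bit circuitous; the cleanest version is simply that $\mathrm{Ndim}$ of a Drinfeld center is a perfect square while $\mathrm{Ndim}(\text{Fibonacci})=5$ is not. The paper instead uses the standard fact that Drinfeld centers have trivial central charge (anomaly), whereas the Fibonacci modular categories do not --- a one-line exclusion that avoids any computation. Either argument is fine. Your discussion of why $\mathcal{Z}(\mathcal{C})$ pointed forces $\mathcal{C}$ pointed is also more cautious than necessary: the forgetful functor $\mathcal{Z}(\mathcal{C})\to\mathcal{C}$ is a surjective tensor functor, and tensor functors send invertibles to invertibles, so this step is immediate.
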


\begin{proof}
Pseudounitarity allows us to consider $\mathcal{C}$ as a spherical fusion category, thus $\mathcal{Z}(\mathcal{C})$ is a modular fusion category.  Corollary \ref{oddcor1} imples $\mathcal{Z}(\mathcal{C})$ is pointed, as Fibonacci modular fusion categories have nontrivial central charge.  Therefore $\mathcal{C}$ is pointed and the result follows from Example \ref{ex:group}.
\end{proof}



\subsection{The even prime}\label{sub:2}

The differences between the cases when $p>2$ in Section \ref{sub:greater} and the cases when $p=2$ described here are nontrivial since
\begin{equation}\label{eqthirty}
[\mathbb{Q}(\zeta_{2^n})^+:\mathbb{Q}]=\left\{\begin{array}{ccc}1 & : & n=1,2,3 \\ 2^{n-3} & : & n\geq4\end{array}\right.
\end{equation}
and weakly integral fusion categories of even dimension are not necessarily integral.

\begin{lemma}\label{lem:two}
Let $\mathcal{C}$ be a modular fusion category and $n\in\mathbb{Z}_{\geq1}$.  If $\mathrm{Ndim}(\mathcal{C})=\dim(\mathcal{C})=2^n$, then either $\mathrm{FSexp}(\mathcal{C})<\mathrm{Ndim}(\mathcal{C})$, or $\mathrm{FSexp}(\mathcal{C})\in\{2^n,2^{n+1},2^{n+2}\}$ and there exists $X\in\mathcal{O}(\mathcal{C})$ with $\mathrm{FSexp}(\mathcal{C})$ dividing $\mathrm{ord}(t_{X,X})$ and $\mathcal{M}(\dim(X))<2^k$ if $\mathrm{FSexp}(\mathcal{C})=2^{n+3-k}$ for $k\in\{1,2,3\}$.
\end{lemma}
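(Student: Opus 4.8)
The plan is to mirror the structure of the proof of Lemma~\ref{lem:1}, but using the two-part description of $[\mathbb{Q}(\zeta_{2^n})^+:\mathbb{Q}]$ in Equation~(\ref{eqthirty}) rather than the uniform formula $\tfrac12 p^{m-1}(p-1)$ available for odd $p$. First I would invoke Lemma~\ref{theelemma} to produce $X\in\mathcal{O}(\mathcal{C})$ with $\mathrm{FSexp}(\mathcal{C})=2^m$ dividing $\mathrm{ord}(t_{X,X})$. Since $\dim(\mathcal{C})=2^n\in\mathbb{Z}$, Lemma~\ref{theefirstlemma} applies and gives
\begin{equation}
2^n=\dim(\mathcal{C})\geq\dim(\mathcal{O}_X^t)\geq[\mathbb{Q}(\zeta_{2^m})^+:\mathbb{Q}]\,\mathcal{M}(\dim(X)).
\end{equation}
The whole argument then reduces to comparing $[\mathbb{Q}(\zeta_{2^m})^+:\mathbb{Q}]$ against $2^n$ as $m$ varies, keeping track of the two regimes $m\leq 3$ and $m\geq 4$.

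The first step is to bound $m$ from above in the worst case. If $\mathcal{M}(\dim(X))\geq 14/5$ (so in particular $\mathcal{M}(\dim(X))\geq 1$, the minimal possible value for a nonzero cyclotomic integer), then for $m\geq 4$ we get $2^n\geq \tfrac{14}{5}\cdot 2^{m-3}$, which forces $2^{m-3}\leq \tfrac{5}{14}2^n<2^{n-1}$, i.e. $m\leq n+1$; and since we also need to rule out $m=n+2$ I would examine that boundary case directly, where $2^{m-3}=2^{n-1}$ and the inequality $2^n\geq\tfrac{14}{5}2^{n-1}$ fails. So $\mathcal{M}(\dim(X))\geq 14/5$ already gives $m\leq n+1$; combined with the cheap bound $\mathcal{M}(\dim(X))\geq 1$ always, which only yields $2^n\geq 2^{m-3}$ hence $m\leq n+3$, we see the extreme case to analyze is $\mathrm{FSexp}(\mathcal{C})=2^{n+3}$, which can only happen if $\mathcal{M}(\dim(X))<2^0=1$—impossible—so in fact $m\leq n+2$, and more precisely when $m=n+3-k$ for $k\in\{1,2,3\}$ the inequality $2^n\geq 2^{m-3}\mathcal{M}(\dim(X))=2^{n-k}\mathcal{M}(\dim(X))$ forces $\mathcal{M}(\dim(X))<2^k$ (strict, since equality would make $\mathcal{O}(\mathcal{C})=\mathcal{O}_X^t$, a transitive integral modular category, hence trivial by \cite{MR4389082}). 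This is exactly the claimed conclusion.

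The remaining step is to close the gap: why is $m\geq n+3$ genuinely excluded and why, when $m\leq n$, do we land in the case $\mathrm{FSexp}(\mathcal{C})<\mathrm{Ndim}(\mathcal{C})$ of the dichotomy? For $m\leq n$ there is nothing to prove—$\mathrm{FSexp}(\mathcal{C})=2^m\leq 2^n=\mathrm{Ndim}(\mathcal{C})$—and one only has to note that equality $m=n$ with $\dim(X)^2$ large is permitted (it falls under the second alternative with $k=3$, bound $\mathcal{M}(\dim(X))<8$, which is automatic from $2^n\geq\mathcal{M}(\dim(X))\cdot[\mathbb{Q}(\zeta_{2^n})^+:\mathbb{Q}]$ only when $n\geq 4$; for $n\leq 3$ one checks $[\mathbb{Q}(\zeta_{2^n})^+:\mathbb{Q}]=1$ so the bound $\mathcal{M}(\dim(X))<2^n\leq 8$ holds too). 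For $m\geq n+3$: here $[\mathbb{Q}(\zeta_{2^m})^+:\mathbb{Q}]=2^{m-3}\geq 2^n$, so $2^n\geq 2^{m-3}\mathcal{M}(\dim(X))\geq 2^n\mathcal{M}(\dim(X))\geq 2^n$, forcing $\mathcal{M}(\dim(X))=1$ and $\dim(\mathcal{O}_X^t)=2^n=\dim(\mathcal{C})$, i.e. $\overline{\mathcal{C}}$ is transitive with integer dimension, hence trivial \cite{MR4389082}—contradicting $\mathrm{FSexp}(\mathcal{C})=2^m\geq 8$.

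The main obstacle I anticipate is the bookkeeping around the small cases $m\in\{1,2,3\}$ versus $m\geq4$ in Equation~(\ref{eqthirty}): one has to be careful that the index $[\mathbb{Q}(\zeta_{2^m})^+:\mathbb{Q}]$ is $1$ (not $2^{m-3}$) for $m\leq 3$, so the clean inequality $2^n\geq 2^{m-3}\mathcal{M}(\dim(X))$ used above must be replaced by $2^n\geq\mathcal{M}(\dim(X))$ whenever the relevant exponent $m=n+3-k$ is at most $3$; but in every such instance $k\geq n$, so the asserted bound $\mathcal{M}(\dim(X))<2^k$ with $2^k\geq 2^n$ is weaker than $2^n\geq\mathcal{M}(\dim(X))$ and still holds. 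Handling this uniformly—ideally by just writing $[\mathbb{Q}(\zeta_{2^m})^+:\mathbb{Q}]\geq\max(1,2^{m-3})$ and noting $\max(1,2^{m-3})\geq 2^{m-n}\cdot 2^{n-3}$ is not what one wants, so instead splitting once and for all on $m\leq 3$—is the only delicate point; everything else is the transitivity-triviality input of \cite{MR4389082} applied exactly as in Lemmas~\ref{lem:1} and in the proof of Proposition~\ref{thm:three}.
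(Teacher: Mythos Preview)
Your proposal is correct and follows exactly the approach the paper's one-line proof points to: apply Lemmas~\ref{theelemma} and~\ref{theefirstlemma} with the degrees in Equation~(\ref{eqthirty}), then use the transitive classification \cite{MR4389082} to exclude the equality cases and $m\geq n+3$. One cosmetic slip: your first pass at $m=n+3$ says it forces $\mathcal{M}(\dim(X))<1$, but it only forces $\leq1$---you correctly handle this via transitivity in the following paragraph anyway, so the argument stands.
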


\begin{proof}
The same reasoning applies as in the proof of Lemma \ref{lem:1} using the degrees found in Equation (\ref{eqthirty}).
\end{proof}

\begin{lemma}\label{lemon2}
Let $\mathcal{C}$ be a modular fusion category with $\mathrm{FPdim}(\mathcal{C})=2^n$ for some $n\in\mathbb{Z}_{\geq1}$.  Then $\mathrm{FSexp}(\mathcal{C})<\mathrm{Ndim}(\mathcal{C})$, or
\begin{enumerate}
\item $\mathrm{FSexp}(\mathcal{C})=\mathrm{Ndim}(\mathcal{C})$ and
\begin{enumerate}
\item $\mathcal{C}$ is pointed,
\item $\mathcal{C}\simeq\mathcal{I}\boxtimes\mathcal{D}$ where $\mathcal{I}$ is an Ising modular fusion category and $\mathcal{D}$ is pointed of rank $2^2$, or
\item $\mathcal{C}\simeq\mathcal{I}\boxtimes\mathcal{I}'$ where $\mathcal{I},\mathcal{I}'$ are Ising modular fusion categories,
\end{enumerate}
\item $\mathrm{FSexp}(\mathcal{C})=2\cdot\mathrm{Ndim}(\mathcal{C})$ and
\begin{enumerate}
\item $\mathcal{C}$ is pointed, or
\item $\mathcal{C}\simeq\mathcal{I}\boxtimes\mathcal{D}$ where $\mathcal{I}$ is an Ising modular fusion category and $\mathcal{D}$ is pointed of rank 2, or
\end{enumerate}
\item $\mathrm{FSexp}(\mathcal{C})=2^2\cdot\mathrm{Ndim}(\mathcal{C})$ and $\mathcal{C}$ is an Ising modular fusion category.
\end{enumerate}
\end{lemma}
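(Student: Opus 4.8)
The plan is to combine the general constraint of Lemma~\ref{lem:two} with the explicit classification of Proposition~\ref{bigprop}. Since $\mathrm{FPdim}(\mathcal{C})=2^n$ is a rational integer, $\mathcal{C}$ is weakly integral, so $\dim(X)^2=\mathrm{FPdim}(X)^2\in\mathbb{Z}$ for every simple $X$ and, regardless of the ribbon structure carried by $\mathcal{C}$, one has $\dim(\mathcal{C})=\mathrm{FPdim}(\mathcal{C})=\mathrm{Ndim}(\mathcal{C})=2^n$ and $\overline{\mathcal{C}}\simeq\mathcal{C}$. Assuming $\mathrm{FSexp}(\mathcal{C})\geq\mathrm{Ndim}(\mathcal{C})$, Lemma~\ref{lem:two} yields $\mathrm{FSexp}(\mathcal{C})=2^{n+3-k}$ for some $k\in\{1,2,3\}$ --- this is already the trichotomy $\mathrm{FSexp}(\mathcal{C})=2^{3-k}\cdot\mathrm{Ndim}(\mathcal{C})$ of the statement --- and an $X\in\mathcal{O}(\mathcal{C})$ with $\mathrm{FSexp}(\mathcal{C})$ dividing $\mathrm{ord}(t_{X,X})$ and $\mathcal{M}(\dim(X))<2^k$. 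Because $\dim(X)^2$ is a power of $2$ dividing $2^n$, $\mathcal{M}(\dim(X))=\dim(X)^2<2^k$, so $X$ is invertible when $k=1$ and $\dim(X)^2\in\{1,2\}$ when $k=2$. It remains to identify $\mathcal{C}$.

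First I would dispatch the pointed case. If $\mathcal{C}\simeq\mathrm{Vec}^q_G$ with $G$ abelian of order $2^n$ and $q$ nondegenerate, then $\mathrm{FSexp}(\mathcal{C})=\mathrm{lcm}\{\mathrm{ord}(q(g)):g\in G\}$ by Example~\ref{ex:group}; since $\mathrm{ord}(q(g))$ divides $2\,\mathrm{ord}(g)$, this is at most $2^{n+1}$, with equality only for $G$ cyclic. Thus a pointed $\mathcal{C}$ with $\mathrm{FSexp}(\mathcal{C})\geq\mathrm{Ndim}(\mathcal{C})$ is case~(1a) or case~(2a), and $k=1$ is impossible. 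More generally $\mathrm{ord}(\theta_Y)\leq 2^{1+\log_2\dim(\mathcal{C}_\mathrm{pt})}$ for every invertible $Y$, so if the simple object realizing the maximal twist order is invertible then $\dim(\mathcal{C}_\mathrm{pt})\geq 2^{n+2-k}$: for $k\leq2$ this forces $\mathcal{C}$ to be pointed, while for $k=3$ it gives $\dim(\mathcal{C}_\mathrm{ad})\leq2$, whence for non-pointed $\mathcal{C}$ one has $\mathcal{C}\simeq\mathcal{I}\boxtimes\mathcal{D}$ by \cite{MR4195420} with $\mathcal{D}$ pointed of dimension $2^{n-2}$; but a maximal twist order on an invertible object forces $2^4\mid\mathrm{FSexp}(\mathcal{D})$, so $\mathrm{FSexp}(\mathcal{C})=\mathrm{FSexp}(\mathcal{D})\leq 2^{n-1}<2^n$, a contradiction. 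Hence from now on $\mathcal{C}$ is not pointed and the simple object of maximal twist order is noninvertible.

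The core task is then to bound $n$ for a non-pointed $\mathcal{C}$ with $\mathrm{FSexp}(\mathcal{C})\geq\mathrm{Ndim}(\mathcal{C})$. Combining Lemma~\ref{theefirstlemma} with Equation~(\ref{eqthirty}) gives $2^n=\dim(\mathcal{C})\geq\dim(\mathcal{O}_X^t)\geq[\mathbb{Q}(\zeta_{2^{n+3-k}})^+:\mathbb{Q}]\,\dim(X)^2=2^{n-k}\dim(X)^2$. If $\mathcal{C}$ has a simple object of dimension $\sqrt2$ it is non-integral, hence by \cite{MR4097909,MR4195420} a product of Ising modular fusion categories with a pointed modular fusion category, and the $\mathrm{lcm}$ formula for Frobenius--Schur exponents of such products together with $\dim(X)^2\leq2^k$ pins $n$ to a small value; if $\mathcal{C}$ is integral and non-pointed, the object $X$ must be invertible and gives $\dim(\mathcal{C}_\mathrm{pt})\geq 2^{n-k}$, hence $\dim(\mathcal{C}_\mathrm{ad})\leq 2^k$, and a closer analysis of $\mathcal{C}_\mathrm{ad}$ --- using that no integral non-pointed modular fusion category of dimension $2$, $2^2$, or $2^3$ exists --- again forces $n$ to be small. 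This is the main obstacle: a single Galois $t$-orbit controls only $\dim(X)^2$, not $n$, so the leverage must come from extracting that $\dim(\mathcal{C}_\mathrm{pt})$ is a large $2$-group, hence $\mathcal{C}_\mathrm{ad}$ small and $\mathcal{C}$ close to Ising-by-pointed, and then importing the relevant structure theory; doing this uniformly in $n$, rather than reducing to Proposition~\ref{bigprop} for $n\leq5$, is the delicate point.

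Once $n\leq5$ is secured, the proof concludes by inspection of Proposition~\ref{bigprop}. Its entries are pointed categories (already treated), products $\mathcal{I}\boxtimes\mathcal{D}$ with $\mathcal{D}$ pointed of dimension $1,2,2^2,2^3$, products $\mathcal{I}_1\boxtimes\mathcal{I}_2\boxtimes\mathcal{P}$ with $\mathcal{P}$ pointed of dimension $1$ or $2$, and the dimension-$2^5$ subcategory of $\mathcal{Z}(\mathrm{Vec}^\omega_{Q_8})$. Using $\mathrm{FSexp}(\mathcal{I})=2^4$ (Example~\ref{ex:ising}), $\mathrm{FSexp}(\mathcal{A}\boxtimes\mathcal{B})=\mathrm{lcm}(\mathrm{FSexp}(\mathcal{A}),\mathrm{FSexp}(\mathcal{B}))$, and $\mathrm{FSexp}(\mathcal{P})\leq2^{1+\log_2\dim(\mathcal{P})}$ for pointed $\mathcal{P}$, the only entries with $\mathrm{FSexp}(\mathcal{C})\geq\mathrm{Ndim}(\mathcal{C})=2^n$ are an Ising category alone ($\mathrm{FSexp}=2^4=2^2\cdot\mathrm{Ndim}$, case~(3)), $\mathcal{I}\boxtimes\mathcal{D}$ with $\dim(\mathcal{D})=2$ ($\mathrm{FSexp}=2^4=2\cdot\mathrm{Ndim}$, case~(2b)), $\mathcal{I}\boxtimes\mathcal{D}$ with $\dim(\mathcal{D})=2^2$ and $\mathcal{I}_1\boxtimes\mathcal{I}_2$ (both $\mathrm{FSexp}=2^4=\mathrm{Ndim}$, cases~(1b) and~(1c)), together with the pointed cases~(1a) and~(2a); all other entries --- $\mathcal{I}\boxtimes\mathcal{D}$ with $\dim(\mathcal{D})=2^3$, $\mathcal{I}_1\boxtimes\mathcal{I}_2\boxtimes\mathcal{P}$ with $\dim(\mathcal{P})=2$, and the $\mathcal{Z}(\mathrm{Vec}^\omega_{Q_8})$ subcategory, whose Frobenius--Schur exponent divides $\mathrm{FSexp}(\mathrm{Vec}^\omega_{Q_8})\leq2^4<2^5$ --- satisfy $\mathrm{FSexp}(\mathcal{C})<\mathrm{Ndim}(\mathcal{C})$. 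This exhausts the possibilities.
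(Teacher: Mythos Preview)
Your strategy---bound $n\leq5$ and then invoke Proposition~\ref{bigprop}---is genuinely different from the paper's, which never bounds $n$ but instead argues uniformly. The paper splits on $\mathrm{FSexp}(\mathcal{C})\in\{2^{n+2},2^{n+1},2^n\}$; in each case it uses $\mathcal{O}_X^t$ to force $\dim(X)^2\in\{1,2,4\}$, and then runs a structural argument: when $\dim(X)^2=1$ it shows $\dim(\mathcal{C}_\mathrm{pt})$ is large, proves $\mathcal{O}(\mathcal{C}_\mathrm{pt})$ acts transitively on the non-invertibles, and applies Thornton's classification \cite{thornton2012generalized} of modular generalized near-group categories to peel off an Ising factor; when $\dim(X)^2=2$ it checks self-duality of $X$ to again extract an Ising factor; when $\dim(X)^2=4$ it uses the classification \cite{MR4396144} of fusion categories $\otimes$-generated by a self-dual simple of Frobenius--Perron dimension $2$. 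No appeal to Proposition~\ref{bigprop} is made.

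The gap in your argument is precisely the step you flag as ``the delicate point'': you have not actually secured $n\leq5$. The assertion that a non-integral modular fusion category of dimension $2^n$ factors as Isings times pointed is not delivered by \cite{MR4097909,MR4195420} in the generality you need---those results apply only for $\dim(\mathcal{C})\leq 2^5$ or when one already knows $\mathrm{rank}(\mathcal{C}_\mathrm{ad})=2$, and the latter is exactly what the paper extracts from the Galois-orbit bound plus transitivity. In the integral case with $k=3$, the object $X$ from Lemma~\ref{lem:two} need not be invertible since $\dim(X)^2=4$ is permitted, so your inclusion $\mathcal{O}_X^t\subset\mathcal{O}(\mathcal{C}_\mathrm{pt})$ and the resulting bound on $\dim(\mathcal{C}_\mathrm{ad})$ fail. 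Even where that bound holds, $\mathcal{C}_\mathrm{ad}$ is only premodular, so the non-existence of small integral non-pointed \emph{modular} categories says nothing about it. If you try to repair these points you will find yourself reproducing the paper's uniform-in-$n$ structural arguments, after which the reduction to Proposition~\ref{bigprop} is no longer needed. Your final paragraph---the case check for $n\leq5$---is correct, but it is the easy part.
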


\begin{proof}
Since $p=2$ is even, for $X\in\mathcal{O}(\mathcal{C})$ as in the statement of Lemma \ref{lem:two}, it is possible that $\dim(X)^2=2^a$ where $a\in\{0,1,2\}$ as these are the only positive integers $\alpha$ with $\mathcal{M}(\alpha)<2^3$.  The remainder of the proof is divided into the cases when $\mathrm{FSexp}(\mathcal{C})$ is $2^{n+2}$, $2^{n+1}$, and $2^n$, respectively, by Lemma \ref{lem:two}.

\par If $\mathrm{FSexp}(\mathcal{C})=2^{n+2}$, then we have
\begin{equation}
\dim(\mathcal{C})\geq\sum_{Y\in\mathcal{O}_X^t}\dim(Y)^2=2^{n-1+a}.
\end{equation}
Thus $a=2$ leads to an outright contradiction and if $a=1$, then $\mathcal{C}$ is a nontrivial transitive modular fusion category with integer global dimension, which does not exist.  Hence $\dim(X)^2=1$.  In this case $\dim(\mathcal{C}_\mathrm{pt})\geq2^{n-1}$ since $\mathcal{O}_X\subset\mathcal{O}(\mathcal{C}_\mathrm{pt})$.  It follows from \cite[Lemma 9.3(ii)]{MR2313527} and \cite[Section 2.3]{MR4195420} that $\mathcal C$ cannot be pointed, hence $\dim(\mathcal{C}_\mathrm{pt})=2^{n-1}$.  This implies each graded component of $\mathcal{C}$ has dimension $2$.  Therefore $2^{n-2}$ graded components contain $2$ invertible objects, and $2^{n-2}$ graded components have a unique simple object of dimension $\sqrt{2}$.  If $Y\in\mathcal{O}(\mathcal{C})$ has $\dim(Y)=\sqrt{2}$, then $Y\otimes Y^\ast\cong\mathbbm{1}\oplus \delta$ where $\delta\in\mathcal{O}(\mathcal{C}_\mathrm{ad})$ is the unique nontrivial invertible element in the adjoint.  Thus the stabilizer subgroup of $\mathcal{O}(\mathcal{C}_\mathrm{pt})$ acting on the set of $Y\in\mathcal{O}(\mathcal{C})$ with $\dim(Y)=\sqrt{2}$ has order $2$.  Therefore by the orbit-stabilizer theorem, the orbit of $Y$ has order $2^{n-2}$ and moreover this action is transitive.  All modular \emph{generalized near-group} fusion categories are classified \cite[Theorem IV.5.2]{thornton2012generalized} and so we conclude $\mathcal{C}\simeq\mathcal{I}\boxtimes\mathcal{D}$ where $\mathcal{I}$ is an Ising modular fusion category and $\mathcal{D}$ is pointed.  Note that $\dim(\mathcal{D})=2^{n-2}$ since $\dim(\mathcal{I})=2^2$.  Therefore the order of the $T$-matrix for $\mathcal{D}$ is bounded by $2^n$ by Lemma \ref{lem:two}.  If $\mathcal{D}$ is nontrivial, this implies $\mathrm{FSexp}(\mathcal{C})\leq 2^{n+1}$, violating the assumption $\mathrm{FSexp}(\mathcal{C})=2^{n+2}$.  We conclude that $\mathcal{C}$ is an Ising modular fusion category.

\par If $\mathrm{FSexp}(\mathcal{C})=2^{n+1}$, then
\begin{equation}
\dim(\mathcal{C})\geq\sum_{Y\in\mathcal{O}_X^t}\dim(Y)^2=2^{n-2+a}
\end{equation}
and so by the same argument as above, $\dim(X)^2\in\{1,2\}$.  Assume first that $\dim(X)^2=1$.  Then $\dim(\mathcal{C}_\mathrm{pt})=2^n$ by \cite[Lemma 9.3(ii)]{MR2313527} and \cite[Section 2.3]{MR4195420} and $\mathcal{C}$ is pointed.  Assume now that $\dim(X)^2=2$.  If $X\not\cong X^\ast$, then $\theta_X=\theta_{X^\ast}$ implies that $\dim(\mathcal{O}_X)=\dim(\mathcal{C})$, which cannot occur as in the $\mathrm{FSexp}(\mathcal{C})=2^{n+2}$ case.  Hence $X\cong X^\ast$ and $X$ $\otimes$-generates an Ising modular fusion subcategory $\mathcal{I}\subset\mathcal{C}$.  Moreover $\mathcal{C}\simeq\mathcal{I}\boxtimes\mathcal{D}$ for some pointed fusion category $\mathcal{D}$.  By the same reasoning as in the $\mathrm{FSexp}(\mathcal{C})=2^{n+2}$ case, we must have $\dim(\mathcal{D})=2$.

\par If $\mathrm{FSexp}(\mathcal{C})=2^n$, then
\begin{equation}
\dim(\mathcal{C})\geq\sum_{Y\in\mathcal{O}_X^t}\dim(Y)^2=2^{n-3+a}
\end{equation}
and so by the same argument as the previous two cases, $\dim(X)^2\in\{1,2,2^2\}$.  Assume first that $\dim(X)^2=1$.  Then if $\mathcal{C}$ is not pointed, $\dim(\mathcal{C})_\mathrm{pt}=2^{n-1}$ by \cite[Lemma 9.3(ii)]{MR2313527} and \cite[Section 2.3]{MR4195420} and all $2^{n-2}$ noninvertible simple objects have dimension $\sqrt{2}$ since each graded component has dimension $2$.  By the orbit-stabilizer theorem, $\mathcal{O}(\mathcal{C}_\mathrm{pt})$ acts transitively on the simple objects of dimension $\sqrt{2}$ and we are again in the situation $\mathcal{C}\simeq\mathcal{I}\boxtimes\mathcal{D}$ where $\dim(\mathcal{D})=2^{n-2}$ by \cite[Theorem IV.5.2]{thornton2012generalized}.   Therefore the $T$-matrix of $\mathcal{D}$ has order at most $2^{n-1}$, violating the fact that $\mathrm{ord}(\theta_X)=2^n$.

\par Now assume $\dim(X)^2=2$.  If $X\cong X^\ast$, we are again in the situation $\mathcal{C}\simeq\mathcal{I}\boxtimes\mathcal{D}$ where $\dim(\mathcal{D})=2^{n-2}$, but now $\mathcal{D}$ need not be pointed.  If the order of the $T$-matrix of $\mathcal{D}$ is $\geq2^5$, then $\mathrm{FSexp}(\mathcal{C})=2^n$ is equal to the order of the $T$-matrix of $\mathcal{D}$ and $\dim(\mathcal{D})=2^{n-2}$, violating the classification in the $\mathrm{FSexp}(\mathcal{C})=2^{n+2}$ case.  Therefore $\dim(\mathcal{C})=\mathrm{FSexp}(\mathcal{C})=2^4$ and $\dim(\mathcal{D})=2^2$.  These modular fusion categories were completely described in \cite[Lemma 4.10]{MR4097909}.  Now assume $X\not\cong X^\ast$.  Then $\dim(\mathcal{O}_X)=2^{n-1}$ and all simple objects not in the orbit of $X$ have integer dimension.  Let $Y_1,Y_2\in\mathcal{O}_X$ and $Z\in\mathcal{O}(\mathcal{C})\setminus\mathcal{O}_X$.  We have
\begin{equation}
\dim\mathrm{Hom}(Z\otimes Y_1^\ast,Y_2)=\dim\mathrm{Hom}(Z,Y_1\otimes Y_2)=0
\end{equation}
if $\dim(Z)>2$.  But in this case $\dim(Z\otimes Y_1^\ast)$ is irrational and contains only simple objects of integer dimensions as summands.  Thus we conclude $\dim(Z)\in\{1,\sqrt{2},2\}$ for all $Z\in\mathcal{O}(\mathcal{C})$.  Recall that $C_\mathcal{C}(\mathcal{C}_\mathrm{pt})=\mathcal{C}_\mathrm{ad}$, the maximal integral subcategory of $\mathcal{C}$ in this case.  Therefore if $\dim(\mathcal{C}_\mathrm{pt})>2$, there exists a proper, nontrivial fusion subcategory $\mathcal{E}\subset\mathcal{C}_\mathrm{pt}$, and thus $\dim(C_\mathcal{C}(\mathcal{E}))>\dim(\mathcal{C}_\mathrm{ad})=2^{n-1}$.  As a result, $C_\mathcal{C}(\mathcal{E})$ contains a Galois conjugate of $X$.  Since $\mathcal{E}$ is integral, $C_\mathcal{C}(\mathcal{E})$ then contains all Galois conjugates of $X$ and moreover $C_\mathcal{C}(\mathcal{E})=\mathcal{C}$, a contradiction as $\mathcal{C}$ is modular.  We may then conclude that $\dim(\mathcal{C}_\mathrm{pt})=2$.  But in this case, $\mathrm{FPdim}(\mathcal{C}_\mathrm{ad})\equiv2\pmod{4}$, and therefore cannot divide $\mathrm{FPdim}(\mathcal{C})=2^n$ when $n>1$.

\par Finally assume $\dim(X)^2=2^2$.  Thus $\dim(\mathcal{O}_X)\geq(1/2)\dim(\mathcal{C})$.  Since the tensor unit exists, $\mathcal{C}$ is integral as the global dimension of the maximal integral fusion subcategory of $\mathcal{C}$ must divide $\dim(\mathcal{C})$.  Furthermore, $X$ is self-dual; indeed $\theta_X=\theta_{X^\ast}$ so if $X\not\cong X^\ast$, then $\dim(\mathcal{O}_X)=\dim(\mathcal{C})$ and $\mathcal{C}$ would be a transitive modular fusion category.  The fusion rules of fusion categories $\otimes$-generated by a self-dual simple object of Frobenius-Perron dimension $2$ are classified \cite[Theorem 1.2]{MR4396144}.    Therefore, $X$ $\otimes$-generates a fusion subcategory $\mathcal{D}\subset\mathcal{C}$ with exactly $2^2$ invertible objects as the global dimensions of the other types in this classification are divisible by odd integers.  But $\mathcal{D}$ is closed under the Galois action of $\mathcal{C}$ since $\mathcal{C}$ is integral, therefore $\mathcal{D}$ contains all simple objects of $\mathcal{C}$ of dimension $2$.  Moreover, $\mathcal{C}=\mathcal{D}$ since $\dim(\mathcal{D})>(1/2)\dim(\mathcal{C})$.  This forces $\dim(\mathcal{C})=8$ and $\mathrm{rank}(\mathcal{C})=5$, but there are no modular fusion categories of rank 5 of this type \cite[Theorem 4.1]{MR3632091}.
\end{proof}

\begin{proposition}\label{thm:fooor}
Let $\mathcal{C}$ be a modular fusion category and $n\in\mathbb{Z}_{\geq1}$.  If $\mathrm{Ndim}(\mathcal{C})=2^n$, then
\begin{enumerate}
\item $\mathrm{FSexp}(\mathcal{C})<\mathrm{Ndim}(\mathcal{C})$, or
\item $\mathrm{FSexp}(\mathcal{C})=\mathrm{Ndim}(\mathcal{C})$ and
\begin{enumerate}
\item $\mathrm{Ndim}(\mathcal{C})=\dim(\mathcal{C})\in\mathbb{Z}$,
\item $\mathbb{Q}(\dim(\mathcal{C}))=\mathbb{Q}(\sqrt{2})$ and there exists $X\in\mathcal{O}(\mathcal{C})$ with $\mathrm{ord}(t_{X,X})=2^n$ and $\mathcal{M}(\dim(X))<2^2$,
\item $\mathbb{Q}(\dim(\mathcal{C}))=\mathbb{Q}(\sqrt{2+\sqrt{2}})$ and there exists $X\in\mathcal{O}(\mathcal{C})$ with $\mathrm{ord}(t_{X,X})=2^n$ and $\mathcal{M}(\dim(X))<2$;
\end{enumerate}
\item $\mathrm{FSexp}(\mathcal{C})=2\cdot\mathrm{Ndim}(\mathcal{C})$ and
\begin{enumerate}
\item $\mathrm{Ndim}(\mathcal{C})=\dim(\mathcal{C})\in\mathbb{Z}$ or
\item $\mathbb{Q}(\dim(\mathcal{C}))=\mathbb{Q}(\sqrt{2})$ and there exists $X\in\mathcal{O}(\mathcal{C})$ with $\mathrm{ord}(t_{X,X})=2^{n+1}$ and $\mathcal{M}(\dim(X))<2$;
\end{enumerate}
\item $\mathrm{FSexp}(\mathcal{C})=2^2\cdot\mathrm{Ndim}(\mathcal{C})=2^2\cdot \dim(\mathcal{C})$.
\end{enumerate}
\end{proposition}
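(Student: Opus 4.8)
The plan is to pass to the absorbing closure $\overline{\mathcal{C}}$, which has $\dim(\overline{\mathcal{C}})=\mathrm{Ndim}(\mathcal{C})=2^n\in\mathbb{Z}$ and $\mathrm{FSexp}(\overline{\mathcal{C}})=\mathrm{FSexp}(\mathcal{C})$, and then run the argument of Propositions \ref{thm:odd} and \ref{thm:three} with the odd-prime cyclotomic degree estimates replaced by those of Equation (\ref{eqthirty}). Applying Lemma \ref{lem:two} to $\overline{\mathcal{C}}$ gives the dichotomy: either $\mathrm{FSexp}(\mathcal{C})<\mathrm{Ndim}(\mathcal{C})$, which is outcome (1), or $\mathrm{FSexp}(\mathcal{C})=2^{n+3-\kappa}$ for some $\kappa\in\{1,2,3\}$ together with a distinguished $X\in\mathcal{O}(\overline{\mathcal{C}})$ satisfying $\mathrm{FSexp}(\mathcal{C})\mid\mathrm{ord}(t_{X,X})$ and $\mathcal{M}(\dim X)<2^\kappa$; the three values $\kappa=1,2,3$ will produce outcomes (4), (3), (2) respectively. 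Throughout we may assume $\mathrm{FSexp}(\mathcal{C})\geq\mathrm{Ndim}(\mathcal{C})$.

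First I would isolate the arithmetic fact that makes the even prime tractable: if $\alpha$ is a totally real cyclotomic integer lying in some $\mathbb{Q}(\zeta_{2^m})$, then $\mathcal{M}(\alpha)\in\mathbb{Z}_{\geq1}$. Indeed, $\mathcal{M}(\alpha)$ is the average of the Galois conjugates of $\alpha^2$, hence $\geq 1$ by the arithmetic–geometric mean inequality, and it is an integer because the absolute trace on $\mathbb{Q}(\zeta_{2^\ell})^+$ is divisible by its degree $2^{\ell-2}$ (an induction using $\mathcal{O}_{\mathbb{Q}(\zeta_{2^\ell})^+}=\mathbb{Z}[\zeta_{2^\ell}+\zeta_{2^\ell}^{-1}]$ and the power basis). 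Since $\dim(X)^2$ is a totally positive algebraic integer dividing $\dim(\overline{\mathcal{C}})=2^n$ \cite[Proposition 8.14.6]{tcat}, the ideal $(\dim X^2)$ is a power of the unique prime over $2$ in its field of definition, and the bound $\mathcal{M}(\dim X)\in\{1,\dots,2^\kappa-1\}$ then forces $\dim X^2$ into a short explicit list ($1$ when $\kappa=1$; additionally $2$ and $2\pm\sqrt2$ when $\kappa\le 2$; and so on), in particular placing $\mathbb{Q}(\dim X)$ among $\mathbb{Q}$, $\mathbb{Q}(\sqrt2)$, $\mathbb{Q}(\sqrt{2+\sqrt2})$.

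Next, writing $X=\boxtimes_\sigma X_\sigma$ over $\sigma\in\mathrm{Gal}(\mathbb{Q}(\dim\mathcal{C})/\mathbb{Q})$ with $d:=[\mathbb{Q}(\dim\mathcal{C}):\mathbb{Q}]$, the objects $X_\sigma$ carry mutually Galois-conjugate twists, so their normalized self-twists share a common order whose $2$-part is a multiple of $2^{n+3-\kappa}$; applying Lemma \ref{theefirstlemma} to the $d$ ``coordinate'' objects $\mathbbm{1}\boxtimes\cdots\boxtimes X_\sigma\boxtimes\cdots\boxtimes\mathbbm{1}$ of $\overline{\mathcal{C}}$ — pairwise non-Galois-conjugate outside the degenerate configurations — together with Siegel's trace bound \cite[Theorem III]{siegel} gives
\begin{equation}
2^n=\dim(\overline{\mathcal{C}})\geq d\cdot[\mathbb{Q}(\zeta_{2^{n+3-\kappa}})^+:\mathbb{Q}]=d\cdot2^{n+1-\kappa},
\end{equation}
whence $d\leq2^{\kappa-1}$, improved to $d\leq 2^{\kappa-2}$ by the Siegel factor $3/2$ unless $\dim X^2=1$. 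Because $\mathbb{Q}(\dim\mathcal{C})\subseteq\mathbb{Q}(\zeta_{\mathrm{FSexp}(\mathcal{C})})^+$ and the latter is cyclic, $d$ is a power of $2$ and $\mathbb{Q}(\dim\mathcal{C})$ is the unique subfield of that degree: $\mathbb{Q}$ for $d=1$, $\mathbb{Q}(\sqrt2)$ for $d=2$, $\mathbb{Q}(\sqrt{2+\sqrt2})$ for $d=4$. Running $\kappa=1,2,3$ through this yields $d=1$, $d\in\{1,2\}$, $d\in\{1,2,4\}$, i.e.\ outcomes (4), (3), (2). When $d>1$, once $\dim(X_1)^2\in\mathbb{Z}$ has been established the constraint $\mathcal{M}(\dim X)=\prod_\sigma\dim(X_\sigma)^2<2^\kappa$ reads $(\dim X_1^2)^d<2^\kappa$, so a simple object of $\mathcal{C}$ with $\mathrm{ord}(t_{X,X})=\mathrm{FSexp}(\mathcal{C})$ has $\mathcal{M}(\dim X_1)=\dim X_1^2<2^{\kappa/d}\le 2^{\kappa-1}$, which is precisely the refined bound recorded in (2)(b), (2)(c), (3)(b).

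The main obstacle is exactly what this counting does not see: the degenerate configurations in which some $X_\sigma$ is Galois conjugate to $\mathbbm{1}$ in its factor (so the $d$ coordinate orbits collapse), the bookkeeping of the cube-root ambiguity in $\gamma(\overline{\mathcal{C}})=\prod_\sigma\gamma(\mathcal{C}^\sigma)$ needed to compare $\mathrm{ord}(t_{X,X})$ with the $\mathrm{ord}(t_{X_\sigma,X_\sigma})$, and the verification that $\dim(X_1)^2\in\mathbb{Z}$ in the cases $d>1$. These are dispatched as in the closing paragraphs of Proposition \ref{thm:odd}: a Galois conjugate of $\mathbbm{1}$ forces $|\mathcal{O}_{\mathbbm{1}}|$ large, hence the existence of a further simple object generating $\mathbb{Q}(\zeta_{2^{n+3-\kappa}})^+$, and repeating the orbit count on it produces a contradiction except in the smallest Frobenius-Perron dimensions, where the classifications of low-rank (modular) fusion categories already invoked in Lemma \ref{lemon2} close the argument.
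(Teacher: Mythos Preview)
Your overall plan is the paper's own: pass to $\overline{\mathcal{C}}$, invoke Lemma~\ref{lem:two}, place the $d$ coordinate objects $\mathbbm{1}\boxtimes\cdots\boxtimes X_\sigma\boxtimes\cdots\boxtimes\mathbbm{1}$ in disjoint Galois orbits, and read off constraints on $d=[\mathbb{Q}(\dim\mathcal{C}):\mathbb{Q}]$. The structure is right, but your central displayed inequality overclaims by a factor of $2$, and this is exactly where the even prime differs from the odd case.

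You write $[\mathbb{Q}(\zeta_{2^{n+3-\kappa}})^+:\mathbb{Q}]=2^{n+1-\kappa}$ and feed that into Lemma~\ref{theefirstlemma}. As a field degree this is correct, but look at Equation~(\ref{eqthirty}): the paper records, and uses throughout the proof, the value $2^{n-\kappa}$ instead. The point is that for $p=2$ the Galois group $\mathrm{Gal}(\mathbb{Q}(\zeta_{2^m})/\mathbb{Q})$ is not cyclic, so the kernel of $\sigma\mapsto\sigma^2$ has order $4$ rather than $2$; the square-Galois orbit $\mathcal{O}_X^t$ is governed by the image of the squaring map, which has size $2^{m-3}$, not by $[\mathbb{Q}(\zeta_{2^m})^+:\mathbb{Q}]=2^{m-2}$. (The identification in the proof of Lemma~\ref{theefirstlemma} of $\ker\pi$ with $\ker\Gamma$ fails precisely here.) With the correct bound you only get $2^n\geq d\cdot 2^{n-\kappa}$, hence $d\leq 2^\kappa$, not $d\leq 2^{\kappa-1}$. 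The paper then trims this case by case: $d=2^\kappa$ forces equality, hence $\overline{\mathcal{C}}$ transitive with integer dimension, which is impossible; and for the surviving values of $d$ one refines by asking when $\mathcal{M}(\dim X)\geq 2$, $\geq 4$ etc.\ again forces transitivity. That case split is what produces the nested bounds $\mathcal{M}(\dim X)<2,2^2$ in (2)(b), (2)(c), (3)(b), and it is not bypassed by your shortcut.

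A second, smaller gap: you obtain $X\in\mathcal{O}(\overline{\mathcal{C}})$ and then try to descend to $\mathcal{C}$ via ``$\mathcal{M}(\dim X)=\prod_\sigma\dim(X_\sigma)^2$'' and ``$(\dim X_1^2)^d<2^\kappa$.'' This presumes the factors $X_\sigma$ are mutual Galois images of a single object, which is not automatic for an arbitrary simple of $\overline{\mathcal{C}}$. The paper avoids this by invoking Lemma~\ref{theelemma} directly on $\mathcal{C}$ to produce $X\in\mathcal{O}(\mathcal{C})$ with $\mathrm{FSexp}(\mathcal{C})\mid\mathrm{ord}(t_{X,X})$, and then forming the $d$ coordinate objects from that single $X$; the $\mathcal{M}$-bound on $\dim(X)$ then comes out of the case analysis above, not from a product formula. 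Your first paragraph on the integrality of $\mathcal{M}(\alpha)$ and the explicit list of $\dim X^2$ is not needed for the proposition as stated and does not repair either issue.
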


\begin{proof}
Observe that if $n=1,2$, then $\mathcal{C}$ is weakly integral and thus $\mathcal{C}$ is described explicitly by Lemma \ref{lemon2}.   So we assume $n\geq3$.  The same proof applies as in Proposition \ref{thm:odd}; let $d:=[\mathbb{Q}(\dim(\mathcal{C})):\mathbb{Q}]$.  Lemma \ref{theelemma} implies there exists $X\in\mathcal{O}(\mathcal{C})$ such that $\mathrm{FSexp}(\mathcal{C})$ divides $t_{X,X}$.  Unless $d=2$ and $X$ is Galois conjugate to $\mathbbm{1}$, then none of the $d$ simple objects $\mathbbm{1}\boxtimes\cdots\boxtimes X_\sigma\boxtimes\cdots\boxtimes\mathbbm{1}$ are Galois conjugate in $\overline{\mathcal{C}}$, and their Galois orbits are at least the dimension of $\mathcal{O}^t_X$.  We will consider this case later. Therefore
\begin{equation}
\dim(\overline{\mathcal{C}})\geq d\sum_{Y\in\mathcal{O}_X^t}\dim(Y)^2\geq d\left\{\begin{array}{ccl}2^{n-3} & : & \mathrm{FSexp}(\mathcal{C})=2^n \\ 2^{n-2} & : & \mathrm{FSexp}(\mathcal{C})=2^{n+1}\\ 2^{n-1} & : & \mathrm{FSexp}(\mathcal{C})=2^{n+2}\end{array}\right..
\end{equation}
As $d$ divides $2^n$, this forces $d=1,2,2^2,2^3$ in any case.  If $\mathrm{FSexp}(\mathcal{C})=2^{n+2}$ and $d\geq2$, then $\overline{\mathcal{C}}$ would be transitive, which we have shown cannot occur.  Therefore $\mathrm{FSexp}(\mathcal{C})=2^{n+2}$ implies $\dim(\mathcal{C})\in\mathbb{Z}$.

\par If $\mathcal{M}(\dim(X))\geq2$, then $\sum_{Y\in\mathcal{O}_X^t}\dim(Y)^2\geq d\cdot2\cdot2^{n-2}=d2^{n-1}$.  Thus $\dim(\mathcal{C})\in\mathbb{Z}$ again since $\overline{\mathcal{C}}$ cannot be transitive.  Lastly assume $\mathrm{FSexp}(\mathcal{C})=2^n$.  If $d=2^2$, then as in the previous case $\mathcal{M}(\dim(X))<2$.  If $d=2$, then $\mathcal{M}(\dim(X))<2^2$.  Otherwise $\dim(\mathcal{C})\in\mathbb{Z}$.  This completes the argument as the fields generated by $\dim(X)^2$ are uniquely determined by their degree since $\mathrm{Gal}(\mathbb{Q}(\zeta_{2^m})^+/\mathbb{Q})$ is cyclic for all $m\in\mathbb{Z}_{\geq0}$.
\end{proof}

\begin{proposition}\label{corcor}
Let $\mathcal{C}$ be Galois conjugate to a pseudounitary modular fusion category such that $\mathrm{Ndim}(\mathcal{C})=2^n$ for some $n\in\mathbb{Z}_{\geq1}$.  Then $\mathrm{FSexp}(\mathcal{C})<\mathrm{Ndim}(\mathcal{C})$ unless
\begin{enumerate}
\item $\mathcal{C}$ is pointed,
\item $\mathcal{C}\simeq\mathcal{I}\boxtimes\mathcal{D}$ for an Ising modular fusion category $\mathcal{I}$ and a pointed modular fusion category $\mathcal{D}$ of dimension $1$, $2$, or $2^2$, or
\item $\mathcal{C}\simeq\mathcal{I}\boxtimes\mathcal{I}'$ for Ising modular fusion categories $\mathcal{I},\mathcal{I}'$. 
\end{enumerate}
\end{proposition}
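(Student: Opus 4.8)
The plan is to run the even-prime analogue of the proof of Corollary \ref{oddcor1}, feeding Proposition \ref{thm:fooor} into Lemma \ref{lemon2}. First I would apply Proposition \ref{thm:fooor} to $\mathcal{C}$. If we are in case (1) there is nothing to prove, so assume $\mathrm{FSexp}(\mathcal{C})\geq\mathrm{Ndim}(\mathcal{C})$, i.e.\ one of cases (2)--(4). The key observation is that every category on the exceptional list of the statement (pointed; $\mathcal{I}\boxtimes\mathcal{D}$ with $\mathcal{D}$ pointed of dimension $1$, $2$, or $2^2$; and $\mathcal{I}\boxtimes\mathcal{I}'$) has integer global dimension, so it suffices to prove that in all of cases (2)--(4) one has $\dim(\mathcal{C})\in\mathbb{Z}$, and then that this forces $\mathcal{C}$ onto the list. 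The second implication is immediate: if $\dim(\mathcal{C})\in\mathbb{Z}$ then $\mathrm{Ndim}(\mathcal{C})=\dim(\mathcal{C})=2^n$, and since $\mathcal{C}$ is Galois conjugate to a pseudounitary category, $\mathrm{FPdim}(\mathcal{C})$ is a Galois conjugate of the \emph{rational} integer $\dim(\mathcal{C})$, hence $\mathrm{FPdim}(\mathcal{C})=\dim(\mathcal{C})=2^n$; thus $\mathcal{C}$ is itself pseudounitary and weakly integral, Lemma \ref{lemon2} applies, and each of its six alternatives lies on our list (an Ising modular category being $\mathcal{I}\boxtimes\mathcal{D}$ with $\dim(\mathcal{D})=1$, a pointed modular category of rank $2$ or $2^2$ having dimension $2$ or $2^2$). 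This also settles cases (2a), (3a), and (4) of Proposition \ref{thm:fooor} at once, since there $\dim(\mathcal{C})\in\mathbb{Z}$ is part of the statement.

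It therefore remains to exclude the non-integral alternatives (2b), (2c), and (3b), and this is where the pseudounitarity hypothesis does genuine work; I expect this to be the main obstacle. In each of these $d:=[\mathbb{Q}(\dim(\mathcal{C})):\mathbb{Q}]\geq 2$, so $\mathcal{C}$ is nontrivial and $\mathrm{FPdim}(\mathcal{C})<\mathrm{Ndim}(\mathcal{C})=2^n$, because $\mathrm{FPdim}(\mathcal{C})$ is one of the $d\geq 2$ Galois conjugates of $\dim(\mathcal{C})$ while the remaining conjugates are global dimensions of nontrivial modular categories and hence exceed $1$. Proposition \ref{thm:fooor} supplies $X\in\mathcal{O}(\mathcal{C})$ with $\mathrm{ord}(t_{X,X})=\mathrm{FSexp}(\mathcal{C})$ and with $\mathcal{M}(\dim(X))$ small. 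Since the Galois action on $\mathcal{O}(\mathcal{C})$ is a fusion-ring automorphism it preserves Frobenius--Perron dimensions, so every object of $\mathcal{O}_X^t$ has Frobenius--Perron dimension $\mathrm{FPdim}(X)$, and $|\mathcal{O}_X^t|\geq[\mathbb{Q}(t_{X,X})^+:\mathbb{Q}]$ as in the proof of Lemma \ref{theefirstlemma}; hence
\begin{equation*}
2^n>\mathrm{FPdim}(\mathcal{C})\geq\sum_{Y\in\mathcal{O}_X^t}\mathrm{FPdim}(Y)^2\geq[\mathbb{Q}(t_{X,X})^+:\mathbb{Q}]\cdot\mathrm{FPdim}(X)^2.
\end{equation*}

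Now $\mathrm{FPdim}(X)$ is Galois conjugate to $\dim(X)$, so $\mathcal{M}(\mathrm{FPdim}(X))=\mathcal{M}(\dim(X))$ is bounded by $2^2$ in case (2b) and by $2$ in cases (2c), (3b), and $\mathrm{FPdim}(X)$ lies in the $2$-power cyclotomic field $\mathbb{Q}(\zeta_{\mathrm{FSexp}(\mathcal{C})})^+$; combining this with the displayed inequality and the classifications of \cite{MR2786219,MR3814339} (of which only $1,\sqrt2,2,1+\sqrt2,\dots$ lie in a $2$-power cyclotomic field) forces $X$ to be invertible in cases (2c) and (3b) and $\mathrm{FPdim}(X)\in\{1,\sqrt2,2,1+\sqrt2\}$ in case (2b). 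In the invertible case $\mathcal{O}_X^t\subset\mathcal{O}(\mathcal{C}_{\mathrm{pt}})$, so $|\mathcal{O}(\mathcal{C}_{\mathrm{pt}})|$ --- a power of $2$ dividing $\dim(\mathcal{C})$ in the ring of integers --- is at least $[\mathbb{Q}(t_{X,X})^+:\mathbb{Q}]$, which together with $|\mathcal{O}(\mathcal{C}_{\mathrm{pt}})|^{d}\mid 2^n$ bounds $n$ and hence $\mathrm{Ndim}(\mathcal{C})$; the finitely many residual possibilities are eliminated by the classification of modular categories of small norm (Proposition \ref{bigprop} and its lower-norm predecessors), none of which is non-integral. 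In the remaining subcases of (2b) the self-dual object $X$ (respectively its Galois orbit) $\otimes$-generates an Ising, near-group, or rank-$2$ modular fusion subcategory, and rerunning the grading-and-dimension-count argument of Lemma \ref{lemon2} forces $\mathcal{C}\simeq\mathcal{I}\boxtimes\mathcal{D}$ with $\mathcal{D}$ pointed, so $\dim(\mathcal{C})\in\mathbb{Z}$, contradicting the standing assumption. With (2b), (2c), (3b) ruled out, $\dim(\mathcal{C})\in\mathbb{Z}$ in all of cases (2)--(4), which completes the proof.
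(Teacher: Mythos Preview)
Your overall plan---reduce via Proposition~\ref{thm:fooor} to the three non-integral cases (2b), (2c), (3b) and then rule each out, so that Lemma~\ref{lemon2} finishes the job---is exactly the paper's strategy. But the execution has a genuine error at its core: the Galois permutation $\hat\sigma$ of $\mathcal{O}(\mathcal{C})$ is \emph{not} a fusion-ring automorphism and does \emph{not} preserve Frobenius--Perron dimensions. Already in the Fibonacci category the Galois action swaps $\mathbbm{1}$ and the nontrivial object $X$, which have $\mathrm{FPdim}$ equal to $1$ and $(1+\sqrt{5})/2$ respectively. More structurally, Equation~(\ref{galdim}) shows that categorical dimensions transform by the factor $\dim(\mathcal{C})/\sigma(\dim(\mathcal{C}))$, which is a nontrivial unit precisely when $\dim(\mathcal{C})\notin\mathbb{Z}$---i.e.\ precisely in the cases (2b), (2c), (3b) you are trying to exclude. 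So the chain $\mathrm{FPdim}(\mathcal{C})\geq\sum_{Y\in\mathcal{O}_X^t}\mathrm{FPdim}(Y)^2\geq[\mathbb{Q}(t_{X,X})^+:\mathbb{Q}]\cdot\mathrm{FPdim}(X)^2$ collapses at the last step, and with it the conclusion that $X$ is invertible in (2c) and (3b) and the short list in (2b).

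The paper's proof works instead with categorical dimensions and exploits exactly the feature that breaks your argument: the unit $\dim(\mathcal{C})/\sigma(\dim(\mathcal{C}))$ appearing in Equation~(\ref{galdim}). In case (3b) it uses \cite[Proposition~1.4]{2019arXiv191212260G} to get $\dim(X)^2\in\mathbb{Q}(\sqrt{2})$ and forces $\dim(X)^2=1$ by total positivity; in (2b) it allows $\dim(X)=1+\sqrt{2}$ and eliminates it by bounding $\dim(\hat\sigma(X))^2=(1+\sqrt{2})^{2m+2}$ via the unit group of $\mathbb{Z}[\sqrt{2}]$; in (2c) it handles $\dim(X)=\sqrt{2}$ and $\dim(X)=\sqrt{2+\sqrt{2}}$ separately, the latter via the inequality $\dim(\mathcal{C})<2^n/(1.38)^3$ from \cite{ostrikremarks}. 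Your sketch of the residual (2b) cases (``self-dual $X$ $\otimes$-generates an Ising, near-group, or rank-$2$ subcategory'') is also too loose---$\mathrm{FPdim}(X)=1+\sqrt{2}$ does not fit any of those templates and needs the unit-group argument.
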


\begin{proof}
We will show that $\mathcal{C}$ must be weakly integral if $\mathrm{FSexp}(\mathcal{C})\geq\mathrm{Ndim}(\mathcal{C})$.  By Proposition \ref{thm:fooor}, we need only consider three cases: those of Proposition \ref{thm:fooor} (2b), (2c), and (3b).  We again can disregard the cases $n=1,2$ since $\mathcal{C}$ is known to be weakly integral in these cases.

\par Let $X\in\mathcal{O}(\mathcal{C})$ be as in the statement of Proposition \ref{thm:fooor} (3b).  Since $\mathcal{C}$ is Galois conjugate to a pseudounitary modular fusion category, then $\dim(X)^2\in\mathbb{Q}(\sqrt{2})$ \cite[Proposition 1.4]{2019arXiv191212260G}, say $\dim(X)^2=a+b\sqrt{2}$ for some $a,b\in\mathbb{Z}_{\geq0}$ without loss of generality.  We must have $a=\mathrm{Tr}_{\mathbb{Q}(\sqrt{2})/\mathbb{Q}}(\dim(X)^2)/2<2$, i.e.\ $a\in\{0,1\}$, but this implies $\dim(X)^2$ is not totally positive unless $b=0$.  Therefore $\dim(X)^2=1$, i.e.\ $X$ is invertible.  Since $\mathrm{ord}(t_{X,X})=2^{n+1}$, then $\dim(\mathcal{C}_\mathrm{pt})\geq2^{n-2}$.  Assume $\dim(\mathcal{C}_\mathrm{pt})=2^{n-2}$.  As a result, $\mathbb{Q}(\dim(\mathcal{C}_\mathrm{ad}))=\mathbb{Q}(\sqrt{2})$ and $\dim(\mathcal{C})=2^{n-2}\dim(\mathcal{C}_\mathrm{ad})$.  This implies $2^n=\mathrm{Ndim}(\mathcal{C})=2^{2n-4}N_{\mathbb{Q}(\sqrt{2})/\mathbb{Q}}(\dim(\mathcal{C}_\mathrm{ad}))$.  Thus $n=3,4$ and $\dim(\mathcal{C}_\mathrm{ad})\leq2$ as it is a totally positive algebraic integer.  Therefore $\mathcal{C}_\mathrm{ad}$ is pointed and moreover no such category $\mathcal{C}$ can exist as it must be weakly integral.

\par Let $X\in\mathcal{O}(\mathcal{C})$ be as in the statement of Proposition \ref{thm:fooor} (2b).  Since $\mathcal{C}$ is Galois conjugate to a pseudounitary modular fusion category, then $\dim(X)^2\in\mathbb{Q}(\sqrt{2})$, say $\dim(X)^2=a+b\sqrt{2}$ for some $a,b\in\mathbb{Z}_{\geq0}$ without loss of generality.  We must have $a=\mathrm{Tr}_{\mathbb{Q}(\sqrt{2})/\mathbb{Q}}(\dim(X)^2)/2<2^2$, i.e.\ $a\in\{0,1,2,3\}$.  If $a=0$ then $\dim(X)^2$ is not totally positive, so we must have $a=1,2$ and $b=0,1$ or $a=3$ and $b=2$ as $\dim(X)^2$ must have algebraic norm a power of $2$ as well.  The former case is impossible for the same reasons as in the preceding paragraph.  So assume $\dim(X)^2=3+2\sqrt{2}$, i.e.\ $\dim(X)=1+\sqrt{2}$.  Though $\dim(\mathcal{C})$ is unknown, it is a totally positive algebraic integer in $\mathbb{Q}(\sqrt{2})$ which is maximal among its Galois conjugates by assumption.  Therefore $\dim(\mathcal{C})/\sigma(\dim(\mathcal{C}))$ is a totally positive algebraic unit in $\mathbb{Q}(\sqrt{2})$ greater than or equal to $1$, i.e.\ $\dim(\mathcal{C})=(1+\sqrt{2})^{2m}$ for some $m\in\mathbb{Z}_{\geq1}$.  Therefore, if $\sigma\in\mathrm{Gal}(\overline{\mathbb{Q}}/\mathbb{Q})$ is any Galois automorphism such that $\sigma(\sqrt{2})=-\sqrt{2}$, then
\begin{equation*}
\dim(\hat{\sigma}(X))^2=\dfrac{\dim(\mathcal{C})}{\sigma(\dim(\mathcal{C}))}\dim(X)^2=(1+\sqrt{2})^{2m+2}.
\end{equation*}
Summing over the square-Galois orbit of $X$ then gives the inequality
\begin{equation*}
2^n>\sum_{Y\in\mathcal{O}_X}\dim(Y)^2\geq2^{n-4}(1+\sqrt{2})^2+2^{n-4}(1+\sqrt{2})^{2m+4},
\end{equation*}
and moreover $16>(1+\sqrt{2})^2+(1+\sqrt{2})^{2m+4}$ which is not true for any $m\in\mathbb{Z}_{\geq1}$.

\par Let $X\in\mathcal{O}(\mathcal{C})$ be as in the statement of Proposition \ref{thm:fooor}(2c).  All totaly real cyclotomic integers with $\mathcal{M}(\dim(X))<2$ are a sum of two roots of unity, thus taking the norm into consideration $\dim(X)=\sqrt{2}$ or $\dim(X)=\sqrt{2+\sqrt{2}}$.  Assuming first that $\dim(X)=\sqrt{2}$, if $X\cong X^\ast$, then $X$ $\otimes$-generates an Ising modular fusion subcategory of $\mathcal{C}$.  Moreover, $\mathcal{C}\simeq\mathcal{I}\boxtimes\mathcal{D}$ for a modular fusion category $\mathcal{D}$ with $\mathrm{Ndim}(\mathcal{D})=2^{n-2}$ and $\mathbb{Q}(\dim(\mathcal{D}))=\mathbb{Q}(\sqrt{2+\sqrt{2}})$.  Proposition \ref{thm:fooor}(2c) then implies $\mathrm{FSexp}(\mathcal{D})\leq2^{n-2}$.  As a result, $\mathrm{FSexp}(\mathcal{C})=\mathrm{Ndim}(\mathcal{C})=2^4$ and moreover $\mathrm{FPdim}(\mathcal{D})\leq\mathrm{Ndim}(\mathcal{D})=2^2$.  It is easy to see that any modular fusion category with $\mathrm{FPdim}(\mathcal{D})\leq2^2$ is a pointed modular fusion category or an Ising modular fusion category, so the case of $\dim(X)=\sqrt{2}$ cannot occur when $X\cong X^\ast$ as $\mathcal{C}$ would be weakly integral.  In the case $\dim(X)=\sqrt{2}$ and $X\not\cong X^\ast$, then $\mathcal{O}_X^t$ and $\mathcal{O}_{X^\ast}^t$ are disjoint, hence
\begin{equation}\label{band}
\sum_{Y\in\mathcal{O}_X^t}\dim(Y)^2+\sum_{Y\in\mathcal{O}_{X^\ast}^t}\dim(Y)^2>2\cdot2\cdot 2^{n-3}=2^{n-1}
\end{equation}
since for every $\sigma\in\mathrm{Gal}(\overline{\mathbb{Q}}/\mathbb{Q})$,
\begin{equation}
\dim(\hat{\sigma}(X))^2=\dfrac{\dim(\mathcal{C})}{\sigma(\dim(\mathcal{C}))}\sigma(\dim(X)^2)\geq2
\end{equation}
and since $[\mathbb{Q}(\dim(\mathcal{C})):\mathbb{Q}]=4$, $\dim(\hat{\sigma}(X))^2>2$ for at least one $\sigma\in\mathrm{Gal}(\overline{\mathbb{Q}}/\mathbb{Q})$.  But note that $\mathcal{C}$ has a nontrivial pointed fusion subcategory, containing at least the nontrivial simple summand of $X\otimes X^\ast$, which must be an invertible object of order 2.  As such $\dim(\mathcal{C}_\mathrm{ad})=\dim(\mathcal{C})/\dim(\mathcal{C}_\mathrm{pt})\leq 2^{n-1}$.  But both $\mathcal{O}(\mathcal{C}_\mathrm{ad})$ and $\mathcal{O}(\mathcal{C})\setminus\mathcal{O}(\mathcal{C}_\mathrm{ad})$ are closed under Galois conjugacy, hence by Equation (\ref{band}), $X$ cannot lie in either $\mathcal{O}(\mathcal{C}_\mathrm{ad})$ or $\mathcal{O}(\mathcal{C})\setminus\mathcal{O}(\mathcal{C}_\mathrm{ad})$, a contradiction.

\par Now assume $\dim(X)=\sqrt{2+\sqrt{2}}$ and $\mathcal{C}$ is pseudounitary without loss of generality. Let $\sigma\in\mathrm{Gal}(\overline{\mathbb{Q}}/\mathbb{Q})$ such that $\sigma$ restricted to $\mathbb{Q}(\sqrt{2+\sqrt{2}})$ generates $\sigma\in\mathrm{Gal}(\mathbb{Q}(\sqrt{2+\sqrt{2}})/\mathbb{Q})$.  In particular $\sigma(\dim(\mathcal{C}))\neq\dim(\mathcal{C})$ and $\sigma(\sqrt{2})=-\sqrt{2}$.  Furthermore,  $\dim(\hat{\sigma}(X))^2=\dim(\mathcal{C})\sigma(\dim(\mathcal{C}))^{-1}(2-\sqrt{2})$, which implies $\dim(\mathcal{C})\sigma(\dim(\mathcal{C}))^{-1}$ is a totally positive algebraic unit in $\mathbb{Q}(\sqrt{2})$ which is greater than or equal to $1$.  Therefore
\begin{equation}
\dim(\hat{\sigma}(X))^2=\dfrac{\dim(\mathcal{C})}{\sigma(\dim(\mathcal{C}))}(2-\sqrt{2})\geq(1+\sqrt{2})^2(2-\sqrt{2})=2+\sqrt{2}.
\end{equation}
As a result,
\begin{equation}
\dim(\mathcal{C})>\sum_{Y\in\mathcal{O}_X^t}\dim(Y)^2\geq(2+\sqrt{2})2^{n-3}.
\end{equation}
But all Galois conjugates of $\dim(\mathcal{C})$ are strictly greater than $1.38$ \cite[Proposition A.1.1]{ostrikremarks}.  Hence $\dim(\mathcal{C})<2^n/(1.38)^3$.  This cannot occur though since $2^n(1.38)^{-3}>(2+\sqrt{2})2^{n-3}$ therefore $8>(1.38)^3(2+\sqrt{2})$ which is false.  Moreover no such category exists.
\end{proof}

\begin{corollary}\label{evencor}
Let $\mathcal{C}$ be Galois conjugate to a pseudounitary fusion category such that $\mathrm{FSexp}(\mathcal{C})$ is a power of $2$.  Then $\mathrm{FSexp}(\mathcal{C})<\mathrm{Ndim}(\mathcal{C})^2$ unless $\mathcal{C}\simeq\mathrm{Vec}^\omega_G$ for a cyclic $2$-group $G$ and $\omega\in H^3(G,\mathbb{C}^\times)$ any generator, or $\mathcal{C}$ is an Ising fusion category.
\end{corollary}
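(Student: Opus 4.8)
The plan is to follow the proof of Corollary~\ref{oddcor}: pass to the Drinfeld center and feed it into the modular classification, here Proposition~\ref{corcor}. Since $\mathrm{FSexp}(\mathcal{C})$, $\mathrm{Ndim}(\mathcal{C})$ and the two exceptional classes in the statement are invariant under Galois conjugation, I would first assume $\mathcal{C}$ pseudounitary with its canonical spherical structure, so that $\mathcal{Z}(\mathcal{C})$ is a pseudounitary modular fusion category with $\dim(\mathcal{Z}(\mathcal{C}))=\dim(\mathcal{C})^2$ and $\mathrm{FSexp}(\mathcal{Z}(\mathcal{C}))=\mathrm{FSexp}(\mathcal{C})$ a power of $2$. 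Because $\mathrm{FSexp}$ and $\mathrm{Ndim}$ have the same prime divisors \cite[Theorem 3.9]{MR3486174}, $\mathrm{Ndim}(\mathcal{Z}(\mathcal{C}))$ is a power of $2$; and since $\mathrm{Ndim}(\mathcal{Z}(\mathcal{C}))$ is the norm of $\dim(\mathcal{C})^2$ over $\mathbb{Q}(\dim(\mathcal{C})^2)$, a short computation with norms gives $\mathrm{Ndim}(\mathcal{Z}(\mathcal{C}))=\mathrm{Ndim}(\mathcal{C})^{2/e}\leq\mathrm{Ndim}(\mathcal{C})^2$ where $e=[\mathbb{Q}(\dim(\mathcal{C})):\mathbb{Q}(\dim(\mathcal{C})^2)]\in\{1,2\}$.

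Next I would apply Proposition~\ref{corcor} to $\mathcal{Z}(\mathcal{C})$. If $\mathrm{FSexp}(\mathcal{Z}(\mathcal{C}))<\mathrm{Ndim}(\mathcal{Z}(\mathcal{C}))$ then $\mathrm{FSexp}(\mathcal{C})<\mathrm{Ndim}(\mathcal{C})^2$ and we are finished; otherwise $\mathcal{Z}(\mathcal{C})$ is pointed, braided equivalent to $\mathcal{I}\boxtimes\mathcal{D}$ with $\mathcal{I}$ Ising and $\mathcal{D}$ pointed of dimension $1$, $2$, or $2^2$, or braided equivalent to $\mathcal{I}\boxtimes\mathcal{I}'$ with $\mathcal{I},\mathcal{I}'$ Ising. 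To dispose of the mixed alternative I would invoke a central-charge obstruction: a Drinfeld center is anomaly-free, so its additive central charge is $0$ modulo $8$, whereas an Ising modular fusion category has half-odd-integral central charge and a pointed modular fusion category has integral central charge. Hence $\mathcal{I}\boxtimes\mathcal{D}$ with $\mathcal{D}$ pointed has half-odd-integral, in particular nonzero, central charge and cannot be a Drinfeld center; this kills that case and, with $\mathcal{D}$ trivial, the possibility $\mathcal{Z}(\mathcal{C})\simeq\mathcal{I}$, leaving only $\mathcal{Z}(\mathcal{C})\simeq\mathcal{I}\boxtimes\mathcal{I}'$.

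When $\mathcal{Z}(\mathcal{C})\simeq\mathcal{I}\boxtimes\mathcal{I}'$ we get $\dim(\mathcal{C})^2=2^4$, so $\mathcal{C}$ has Frobenius--Perron dimension $2^2$; such a category is pointed or Ising \cite[Corollary 8.30, Proposition 8.32]{ENO}, and the pointed ones (of the form $\mathrm{Vec}_G^\omega$ with $G$ abelian of order $4$) have pointed Drinfeld center, so $\mathcal{C}$ must be Ising -- one of the allowed exceptions, with $\mathrm{FSexp}(\mathcal{C})=2^4=\mathrm{Ndim}(\mathcal{C})^2$. When $\mathcal{Z}(\mathcal{C})$ is pointed, $\mathcal{C}$ is pointed, say $\mathcal{C}\simeq\mathrm{Vec}_G^\omega$ with $\mathrm{Ndim}(\mathcal{C})=|G|$; since $\mathrm{FSexp}(\mathcal{C})$ is a power of $2$ with the same prime divisors as $|G|$, the group $G$ is a $2$-group, and by Example~\ref{ex:group} the integer $\mathrm{FSexp}(\mathrm{Vec}_G^\omega)=\mathrm{lcm}\{|g||\omega_g|:g\in G\}$ divides $|G|^2$ because $|\omega_g|$ divides $|g|$ which divides $|G|$; this divisibility is an equality precisely when $G$ is cyclic and $\omega$ is a generator of $H^3(G,\mathbb{C}^\times)$, which is the final exception.

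The step I expect to be the main obstacle is the non-pointed branch: one must confirm that the central-charge argument eliminates exactly $\mathcal{I}\boxtimes\mathcal{D}$ and $\mathcal{I}$ while sparing $\mathcal{I}\boxtimes\mathcal{I}'$, and then recover $\mathcal{C}$ from $\mathcal{Z}(\mathcal{C})\simeq\mathcal{I}\boxtimes\mathcal{I}'$, for which the essential point is that the only Frobenius--Perron dimension $2^2$ fusion categories with non-pointed Drinfeld center are the Ising ones. The auxiliary facts used -- that the center of a pseudounitary fusion category is a pseudounitary modular fusion category, that a Drinfeld center is anomaly-free, that a pointed Drinfeld center forces the underlying category to be pointed, and the reciprocity formula for Gauss sums -- are all standard.
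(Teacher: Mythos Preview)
Your proposal is correct and follows the same overall arc as the paper: pass to $\mathcal{Z}(\mathcal{C})$, apply Proposition~\ref{corcor}, and then sort the surviving cases. The pointed case and the recovery of $\mathcal{C}$ from $\mathcal{Z}(\mathcal{C})\simeq\mathcal{I}\boxtimes\mathcal{I}'$ are handled exactly as the paper does (via Example~\ref{ex:group} and the classification of Frobenius--Perron dimension $\leq 4$ fusion categories).

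The one genuinely different ingredient is how you rule out $\mathcal{Z}(\mathcal{C})\simeq\mathcal{I}\boxtimes\mathcal{D}$ with $\mathcal{D}$ pointed. The paper argues by Frobenius--Perron dimension: such a center would force $\mathrm{FPdim}(\mathcal{C})\leq 4$, and the forgetful functor pushes the $\sqrt{2}$-dimensional simple of $\mathcal{I}$ down to an object of $\mathcal{C}$ with a $\sqrt{2}$-dimensional simple summand; the only such fusion categories are Ising, whose centers are $\mathcal{I}\boxtimes\mathcal{I}'$, not $\mathcal{I}\boxtimes\mathcal{D}$. You instead invoke the anomaly-freeness of Drinfeld centers together with the fact that Ising categories have half-odd-integer additive central charge while pointed modular categories have integer central charge, so $\mathcal{I}\boxtimes\mathcal{D}$ never has central charge $0\pmod 8$. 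Your route is cleaner and more conceptual---it avoids the small-dimension classification and the forgetful-functor step entirely---while the paper's route has the minor advantage of staying within the Frobenius--Perron framework already in play. Both are standard and valid; your norm computation $\mathrm{Ndim}(\mathcal{Z}(\mathcal{C}))=\mathrm{Ndim}(\mathcal{C})^{2/e}\leq\mathrm{Ndim}(\mathcal{C})^2$ is also a small addition of rigor that the paper leaves implicit.
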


\begin{proof}
This follows immediately from Corollary \ref{corcor} and the details of Example \ref{ex:group} since modular fusion categories of the form $\mathcal{I}\boxtimes\mathcal{D}$ where $\mathcal{I}$ is an Ising modular fusion category and $\mathcal{D}$ is a pointed modular fusion cannot arise as doubles of fusion categories since the Ising fusion categories are the unique family of fusion categories with Frobenius-Perron dimension less than or equal to $4$ and a simple object of Frobenius-Perron dimension $\sqrt{2}$.
\end{proof}

\bibliographystyle{plain}
\bibliography{bib}

\end{document}